\begin{document}
\title{Permutation clones that preserve relations}

\newcommand{\N}{\mathbb N}
\newcommand{\Z}{\mathbb Z}
\newcommand{\Q}{\mathbb Q}
\newcommand{\B}{\mathbb B}
\newcommand{\A}{\mathbb A}
\newcommand{\C}{\mathbb C}
\newcommand{\OO}{{\cal O}}
\newcommand{\RR}{{\cal R}}
\newcommand{\Pow}{{\cal P}}

\newcommand{\Fix}{\operatorname{Fix}}
\newcommand{\Pol}{\operatorname{Pol}}
\newcommand{\PPol}{\operatorname{PPol}}
\newcommand{\Inv}{\operatorname{Inv}}
\newcommand{\Cons}{\operatorname{Cons}}
\newcommand{\Aff}{\operatorname{Aff}}
\newcommand{\Deg}{\operatorname{Deg}}
\newcommand{\Par}{\operatorname{Par}}
\newcommand{\Aut}{\operatorname{Aut}}
\newcommand{\Inn}{\operatorname{Inn}}
\newcommand{\Out}{\operatorname{Out}}
\newcommand{\bigboxplus}{\scalebox{1.6}{$\boxplus$}}

%
%\titlerunning{Abbreviated paper title}
% If the paper title is too long for the running head, you can set
% an abbreviated paper title here
%
\author{Tim Boykett\inst{1}\orcidID{0000-0002-3003-0927} }
\authorrunning{T. Boykett}
% First names are abbreviated in the running head.
% If there are more than two authors, 'et al.' is used.
%
\institute{Institute for Algebra, Johannes-Kepler University, Linz, Austria \\
Time's Up Research, Linz, Austria 
\email{tim@timesup.org}\\
\url{http://timesup.org} \\
University for Applied Arts, Vienna}
\maketitle              % typeset the header of the contribution
\begin{abstract}
Permutation clones generalise permutation groups and clone theory.
We investigate permutation clones defined by relations, or equivalently,
the automorphism groups of powers of relations.
We find many structural results on the lattice of all relationally defined permutation clones on a finite set.
We find all relationally defined permutation clones on two element set.

We show that all maximal borrow closed permutation clones 
are either relationally defined or cancellatively defined.

Permutation clones generalise clones to permutations of $A^n$. Emil Je\v{r}\'{a}bek  found the dual structure to be weight mappings $A^k\rightarrow M$ to a commutative monoid, generalising relations. We investigate the case when the dual object is precisely a relation, equivalently, that $M={\mathbb B}$, calling these relationally defined permutation clones. 
 We determine the number of relationally defined permutation clones on two elements (13).
 We note  that many infinite classes of  clones  collapse when looked at as permutation clones.

%A reversible gate over an alphabet $A$ is a reversible map, or bijection $f:A^n \rightarrow A^n$ for some $n$. A wire permutation is a re-ordering of the elements in the input tuple with no regard to their values.
%A set of reversible gates closed under natural forms of parallel and serial composition and containing the wire permutations is called a permutation clone. 
%In 2018 Emil Je\v{r}\'{a}bek showed that such sets were characterised by 
%weight mappings to commutative monoids, as a Galois connection. 
%This generalises a similar concept for clones, where the dual objects are relations. 
%We investigate when a permutation clone can be defined by relations. Examples include affine and linear maps,
%monotone and degenerate mappings.
%All such permutation clones are borrow closed, some are ancilla closed. 
%We investigate the relations that give trivial permutation clones (either all gates or just wire permutations).
%These permutation clones are then defined by which component mappings they can use.
%We investigate the component maps $f_i:A^n \rightarrow A$ to find which ones arise from a permutation clone.
%We find that non trivial relational permutation clones are balanced, both as relations and as individual maps.
% We formulate this using Erkko Lehtonen's cluster characterisation.

\keywords{reversible gates  \and permutation clones \and weight mappings \and borrow closure}

\end{abstract}
%
%
%

%To DO in this version to submit to arXiv
%\begin{itemize}
%\item the various arguments using incomplete arrays
%\item the non-relation block closure idea: is this worth making into a lemma?
%\item Clean up lots of small bits.
%\end{itemize}
%
%
%To DO in final version with order three to submit to Alg Univ
%\begin{itemize}
%\item add the 2-cycles to 3 element result.
%\item Use this to fix the result in the last section
%\item $P_0$ and equivalence relation result on 3 elements: go through submaximal list in Lau to determine which are trivial.
%\item find exactly which of Zhuk's clones exist as permutation clones? There are at most 14 (using the total order is trivial permclone result). Is this straightforward?
%\item determine which perm. clones on 3 elements are ancilla closed
%\item Do we want to include the example that for f an involution of A, f+f generates a minimal borrow closed pclone, but this is not relationally defined b/c otherwise, we would have f.
%\item make all sets $A$ be 0,1,2 instead of 1,2,3. This only is in one lemma about central relations on three elements. search for 123problem as a comment.
%\end{itemize}

 \section{Introduction}

Given a finite set of logic signals, the reversible gates for these signals
are the basis upon which a reversible computer can be designed and built.
The number of logic signals, whether binary logic, ternary logic or some higher arity logic,
will affect which gates are possible.
In this paper we deepen our understanding of these gates and the ways that they can be combined to engineer reversible computer systems.
In particular we will borrow strongly from clone theory, itself developed as an abstract
theory of circuits.

For a given finite set of logic signals $A$, a $k$-ary gate is a bijection from $A^k$ to itself, that is, a permutation of $A^k$.  Thus we investigate collections of
permutations of $A^k$ for all $k$, closed under natural operations of parallel and serial composition.
Taking the lead from \cite{jerabek18} we call closed systems of bijections 
\emph{permutation clones}.
We  consider ancilla and borrow closure, where an extra input and output is allowed; an \emph{ancilla} logic state is provided and returned in a particular state,  whereas a \emph{borrowed} logic state is provided and returned in an arbitrary state.

%The work naturally draws upon permutation group theory. 
%Regarding computational processes as transformations and thus semigroups, reversibility
%then leads us to consider semigroups with inverses, which are called inverse semigroups and thus contain groups.
%We will endeavour to include all relevant results in this paper; however some
%results require extensive proofs and will only be cited here for comprehension.
%For those who wish to learn more about permutation group theory,
%we recommend the book \cite{mortimerdixon}.

In previous papers, Aaronson, Grier and Schaeffer \cite{aaronsonetal15} have determined all ancilla closed gates on a set of order 2, and the author, together with  Kari and  Salo, 
has investigated generating sets and other topics \cite{b19,bks17}.
More related work is outlined in the next section.

In this paper, we examine permutation clones that are naturally related to clones.
We  consider permutation clones that are defined by their components maps as clones.
Clones are defined by the relations that they respect using a Galois connection.
Through these results, we gain some insight into the general structure of
reversible gate systems.

We start Section 2 by introducing the background properties of permutation clones and the duality results from Je\v{r}\'{a}bek.
We then investigate the properties of relationally defined permutation clones 
and investigate the various maximal classes.
We conclude with some open questions and ideas for further work.

\section{Background}

We start by introducing some clone theory and permutation group theory.
In particular we are interested in co-clones or relational clones as well as automorphism groups of relations.
We will then bring these together as permutation clones, and introduce weight preservation
as the appropriate dual structure.

Let $A$ be a finite set.
Define  ${\cal O}_A^n$ to be the set of all $n$-ary mappings $f:A^n\rightarrow A$.
Let $\OO(A) = \{f:A^n\rightarrow A \mid n \in \N\}= \cup_n{\cal O}_A^n$ be the collections of multivariate mappings of $A$ to itself.
Such mappings can be transformed by 
\begin{enumerate}
\item permuting variables: if $f:A^n\rightarrow A$ and $\sigma\in S_n$ is a permutation, define $g:A^n\rightarrow A$ by $g(x_1,\dots,x_n)= f(x_{\sigma^{-1}(1)},\dots, x_{\sigma^{-1}(n)})$.
\item identifying variables:  if $f:A^n\rightarrow A$ and $n > 1$ , define $g:A^{n-1}\rightarrow A$ by $g(x_1,\dots,x_{n-1})= f(x_1,\dots, x_{n-1},x_{n-1})$.
\item composition of functions:  if $f:A^n\rightarrow A$ and  $g:A^{m}\rightarrow A$, define $h:A^{m+n-1}\rightarrow A$ by $h(x_1,\dots,x_{m+n-1})= f(g(x_1,\dots, x_{n}),x_{n+1},\dots,x_{n+m-1})$.
\item inserting dummy variables:  if $f:A^n\rightarrow A$  , define $g:A^{n+1}\rightarrow A$ by $g(x_1,\dots,x_{n+1})= f(x_1,\dots, x_{n})$.
\end{enumerate}
The projection mapping $\pi_i^n:A^n\rightarrow A$ is defined by $\pi_i^n(x_1,\dots,x_n)=x_i$.
A collection of mappings $C \subseteq \OO(A) $ is called a \emph{clone} if it contains all projections
and is closed under the transformation operations above.
A set that is closed under the operations but does not include the projections is called an \emph{iterative algebra}.

Let $R\subseteq A^k$ be a $k$-ary relation on $A$.
Let $f:A^n\rightarrow A$ be a $n$-ary map on $A$.
We can extend $f$ to act on $A^k$ componentwise, so $f:(A^k)^n \rightarrow A^k$ in a natural fashion.
If $f$ maps $R$ to itself, then we say that $f$ \emph{respects} the relation $R$, written $f \triangleright R$.
We call this a \emph{polymorphism} of $R$.
This gives rise to a Galois connection between maps and relations on $A$, a duality.

\begin{definition}[\protect{\cite[Section 2.4]{lau_2006}}]
A \emph{relational clone} is a collection of relations closed under the following operations:
\begin{enumerate}
\item permutation of entries
\item projection; ignoring one entry
\item cartesian products
\item intersection
\item includes the equality relation $\{(a,a,b) \mid a,b \in A\}$
\end{enumerate}
\end{definition}

\begin{lemma}
 Let $C$ be a  clone. Then there exists a relational clone $\RR$ such that for all $f \in \OO(A)$,  
 $f \in C$ iff $f  \triangleright R$ for all $R \in \RR$.
\end{lemma}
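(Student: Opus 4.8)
The plan is to take $\RR$ to be the set of all relations respected by every operation in $C$, that is, $\RR = \Inv(C) = \{R \mid f \triangleright R \text{ for all } f \in C\}$. This is the classical Galois connection between clones and relational clones, so I would present the standard argument. With this choice there are two things to verify: that $\Inv(C)$ is genuinely a relational clone in the sense of the definition above, and that $C$ is recovered exactly, i.e. $C = \Pol(\Inv(C))$.

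The first task is routine and I would dispatch it quickly. For each of the five closure operations one checks that respecting the input relation(s) forces respecting the output. For instance, if every $f \in C$ respects $R$ and $S$, then componentwise application of $f$ to a tuple of $R \times S$ lands back in $R \times S$; the arguments for permutation of entries, projection, and intersection are analogous, and the equality relation is respected by every $f$ because $f$ acts diagonally on repeated coordinates. The easy half of the second task is equally immediate: if $f \in C$ then by definition of $\Inv(C)$ we have $f \triangleright R$ for every $R \in \Inv(C)$, so $C \subseteq \Pol(\Inv(C))$.

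The substantial content, and the main obstacle, is the converse inclusion: an operation $f$ respecting every relation in $\Inv(C)$ must already lie in $C$. For this I would use the standard ``graph of the clone'' construction. Let $f$ be $n$-ary, set $m = |A|^n$, and enumerate the elements of $A^n$ as $t_1, \dots, t_m$. Let $e_i \in A^m$ be the tuple of $i$-th coordinates, $e_i = (t_1[i], \dots, t_m[i])$, so that $e_i$ is exactly the value-vector of the projection $\pi_i^n$ evaluated at $t_1, \dots, t_m$. Define $\rho \subseteq A^m$ to be the subalgebra of the $m$-th power generated by $e_1, \dots, e_n$ under componentwise application of $C$; because $C$ is a clone this is precisely $\rho = \{(g(t_1), \dots, g(t_m)) : g \in C,\ g \text{ is } n\text{-ary}\}$.

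As a subuniverse of a power of $A$ under the operations of $C$, the relation $\rho$ is respected by every operation of $C$, hence $\rho \in \Inv(C)$. The hypothesis then gives $f \triangleright \rho$. Since the projections lie in $C$ we have $e_1, \dots, e_n \in \rho$, and applying $f$ componentwise yields $f(e_1, \dots, e_n) = (f(t_1), \dots, f(t_m)) \in \rho$. By the description of $\rho$ there is some $n$-ary $g \in C$ agreeing with $f$ on every $t_j$; as the $t_j$ exhaust $A^n$ this forces $f = g \in C$. The crux is recognising that this single relation $\rho$ already encodes the whole $n$-ary behaviour of $C$, so that preservation of just this one invariant relation pins $f$ down; once that relation is constructed, everything else is bookkeeping.
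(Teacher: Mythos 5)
Your proof is correct: it is the standard Geiger/Bodnarchuk--Kaluzhnin--Kotov--Romov argument, taking $\RR=\Inv(C)$ and pinning down an $n$-ary $f$ via the single invariant relation $\rho\subseteq A^{|A|^n}$ consisting of the value tables of the $n$-ary members of $C$. The paper itself gives no proof of this lemma --- it is stated as a known classical fact with a pointer to the clone-theory literature --- so there is nothing to diverge from; your argument is exactly the canonical one, and the finiteness of $A$ (assumed throughout the paper) is what makes $\rho$ a finitary relation and the construction go through.
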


The collection of maps that respect a relation $R$ is written $\Pol(R)$, the \emph{polymorphisms} of $R$.
All clones are of the form $\bigcap_i \Pol(R_i)$ for some (possibly infinite) set of relations $R_i$.
We call relations the \emph{dual structure} of clones.
Clone theory has been well developed for several decades, see e.g.\ \cite{kerkhoffetal,lau_2006} for an overview.

One of the classical results in clone theory is the determination of the maximal clones.

\begin{theorem}[Rosenberg]
 Let $A$ be a finite set. Then the maximal subclones of ${\mathcal O}(A)$ are one of the following.
 \begin{enumerate}
 \item monotone mappings, that is respecting a bounded partial order on $A$
 \item respecting a graph of prime length loops
 \item respecting a nontrivial equivalence relation
 \item affine mappings for a prime $p$: that is, respecting the relation $\{(a,b,c,d) \mid a+b = c+d\}$ where $(A,+)$ is an elementary abelian group
 \item respecting a central relation
 \item respecting a h-generated relation
 \end{enumerate}
\end{theorem}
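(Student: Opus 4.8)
The plan is to route everything through the duality of the Lemma and then classify the relations involved. First I would show that every maximal clone has the form $\Pol(\rho)$ for a \emph{single} relation $\rho$. Since $A$ is finite, the Galois connection gives $M = \Pol(\Inv M)$ for every clone $M$, where $\Inv M$ is the relational clone of all relations respected by $M$. If $M$ is maximal then $M \neq \OO(A)$, so $\Inv M$ strictly contains the trivial relational clone; were every $\rho \in \Inv M$ to satisfy $\Pol(\rho)=\OO(A)$ we would get $M=\OO(A)$, so some $\rho$ has $\Pol(\rho)\neq\OO(A)$, whence $M\subseteq\Pol(\rho)\subsetneq\OO(A)$ and maximality forces $M=\Pol(\rho)$. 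Because $\Pol$ and $\Inv$ reverse inclusions and are mutually inverse bijections between the finite lattices of clones and of relational clones, the maximal clones (coatoms of the clone lattice) correspond precisely to the \emph{atoms} of the relational clone lattice, i.e.\ to relations $\rho$ generating a minimal nontrivial relational clone. The entire problem is thereby reduced to classifying these atoms.

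The argument then splits into two directions. For the easier direction I would verify that each of the six listed relation types generates an atom: one exhibits an operation violating $\rho$ to see that $\Pol(\rho)\neq\OO(A)$, and then shows that every relation primitive-positively definable from $\rho$ is either trivial or pp-defines $\rho$ back, so that nothing lies strictly between. For the bounded orders, nontrivial equivalences and prime permutations this is a direct computation on binary relations; for the affine relation it uses that the relation recovers the elementary abelian group structure; and for the central and $h$-regular relations it rests on their total reflexivity and total symmetry.

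The substantial work is the completeness direction, namely showing there are no other atoms. Here I would fix an atom, choose within it a generating relation $\rho$ of least possible arity, and use pp-definability to normalise $\rho$ (deleting fictitious coordinates, symmetrising, and closing under the diagonal where this is permitted without leaving the relational clone). The classification then proceeds by a case analysis on the shape of this normalised relation: a binary relation that is not symmetric is forced to be a bounded partial order; graphs of permutations yield the prime-permutation type; relations that can be normalised to be totally reflexive and totally symmetric are sorted into nontrivial equivalences, central relations, and $h$-regular relations according to their transitivity and the structure of their centre; and the affine relation is singled out by the Mal'cev (elementary-abelian-group) structure that its polymorphisms must respect.

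I expect that last block to be the main obstacle. It requires Rosenberg's delicate combinatorial lemmas describing how a polymorphism can act on a totally reflexive, totally symmetric relation, the dichotomy between relations possessing a nonempty proper centre and those governed by an $h$-regular family of equivalences, and the verification that every remaining configuration either collapses to the trivial relational clone or pp-defines one of the six canonical relations, contradicting minimality. Assembling these lemmas into an exhaustive case analysis, together with checking that the six families are pairwise incomparable so that each genuinely yields a distinct coatom, is the technical heart of the result and is where essentially all of the difficulty resides.
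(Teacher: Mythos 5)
The paper does not prove this theorem at all: it is quoted as a known classical result of Rosenberg and used as background, so there is no ``paper's proof'' to match your argument against. Your opening reduction is sound and is indeed the standard first step: on a finite set every clone satisfies $M=\Pol(\Inv M)$, so a maximal clone must equal $\Pol(\rho)$ for a single relation $\rho$, and by the antitone Galois correspondence the coatoms of the clone lattice correspond to the minimal nontrivial relational clones. (One correction there: the lattices of clones and of relational clones on a finite set are \emph{not} finite --- for $|A|\ge 3$ they have the cardinality of the continuum, and even for $|A|=2$ they are countably infinite --- but your reduction does not actually need finiteness of the lattices, only the Galois connection, so this is a misstatement rather than a fatal flaw.)

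The genuine gap is that the completeness direction, which you correctly identify as ``the technical heart,'' is not argued but only named. Saying that one would invoke ``Rosenberg's delicate combinatorial lemmas'' to show that every normalised minimal relation falls into one of the six families is circular in a blind proof attempt: those lemmas \emph{are} the theorem. The case analysis --- showing that a relation generating a minimal relational clone, after deleting fictitious coordinates and pp-normalisation, must be a bounded order, the graph of a fixed-point-free prime-order permutation, a nontrivial equivalence, the $4$-ary affine relation of an elementary abelian $p$-group, a central relation, or an $h$-regular relation, and that no other configuration survives --- occupies the bulk of Rosenberg's original memoir and of the later simplified treatments (e.g.\ Quackenbush's), and none of its content appears in your outline. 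As it stands you have a correct and well-organised plan for where the proof must go, plus a verification strategy for the easy direction, but not a proof of the classification itself.
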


Let $A$ be a finite set. 
We will assume a basic understanding of permutation group theory, but clarify some terminology here, for details see e.g.\ \cite{mortimerdixon}.
$Sym(A)=S_A$ is the set of permutations or bijections of $A$.
If $A=\{1,\dots,n\}$ we will often write $S_n$.
We write permutations in cycle notation and act from the right,
so we write the action of a permutation $g\in G \leq Sym(A)$ on an element $a\in A$ as $a^g$.

Given a set $A$ and a $K$-ary relation $R \subseteq A^k$, the collection $\Aut(R) \leq S_A$
of automorphisms of that relation $R$ is a group.
For example the relation defining the edges of a square on $A=\{1,2,3,4\}$ with
\begin{equation}
R=\{ (1,2),(2,3),(3,4),(4,1),(2,1),(3,2), (4,3),(1,4)\} \subseteq A^2
\end{equation}
 has $\Aut(R)$
the dihedral group with 8 elements.
We can speak of $(A,R)$ as a \emph{relational structure} in the same way that an operation $f:A^n\rightarrow A$
gives $(A,f)$ an algebraic structure.

%A subgroup $G\leq S_A$ is \emph{transitive} if for all $a,b\in A$ there is a $g\in G$ such that $a^g=b$.
%We also say that $G$ acts \emph{transitively} on $A$.
%If for all distinct $a_1,\dots,a_n\in A$ and $b_1,\dots,b_n\in A$ there
%is a $g\in G$ such that $a_i^g=b_i$ for all $i$, then we say $G$ is
%\emph{$n$-transitive} on $A$.
%A subgroup $G$ of $S_A$ acts \emph{imprimitively} 
%if there is a nontrivial equivalence relation $\rho$ on $A$ such that
%for all $a,b\in A$, for all $g\in G$, $a \rho b \Rightarrow a^g \rho b^g$.
%If there is no such equivalence relation, then $G$ acts \emph{primitively} on $A$.

Let $G$ be a group of permutations of a set $A$.
Let $n \in \N$.
The \emph{wreath product} $G wr S_n$ is a group of permutations acting on $A^n$.
The elements of $G wr S_n$ are $\{(g_1,\dots,g_n,\alpha) \mid g_i \in G,\,\alpha \in S_n\}$ with action defined by:
for $(a_1,\dots,a_n)\in A^n$,  
\begin{eqnarray}
(a_1,\dots,a_n)^{(g_1,\dots,g_n,\alpha)} 
= (a_{\alpha^{-1}(1)}^{g_1},\dots,a_{\alpha^{-1}(n)}^{g_n})
\end{eqnarray}

%Let $G$ be a group. $\Aut(G)$ is the group of automorphisms of $G$.
%The mappings $\phi_g \in \Aut(G)$ defined by $\phi_g:x\mapsto g^{-1} x g$ for any $x\in G$
%are called \emph{inner automorphisms} and written $\Inn(G)=\{\phi_g \mid g \in G\}$.
%It is not hard to show that this is a normal subgroup,  $\Inn(G) \triangleleft \Aut(G)$.
%Define the \emph{outer automorphisms} to be $\Out(G) = Aut(G)/Inn(G)$.

Let $B_n(A) = Sym(A^n)$ and $B(A) = \bigcup_{n\in \N} B_n(A)$. We call $B_n(A)$ the set of \emph{$n$-ary reversible gates} on $A$,
$B(A)$ the set of \emph{reversible gates}. 
For $\alpha \in S_n$, let $\pi_\alpha \in B_n(A)$ be defined by $\pi_\alpha(x_1,\dots,x_n) = (x_{\alpha^{-1}(1)},\dots,x_{\alpha^{-1}(n)})$.
We call this a \emph{wire permutation}.
Let $\Pi=\{\pi_\alpha \mid \alpha \in S_n,\,n\in\N\}$.
In the case that $\alpha$ is the identity, we write $i_n=\pi_\alpha$, the $n$-ary identity.
Let $f\in B_n(A)$, $g\in B_m(A)$. 
Note that we can write $f$ as $(f_1,\dots,f_n)$ with each $f_i:A^n\rightarrow A$, calling $f_i$ the \emph{components} of $f$.
Define the \emph{parallel composition} as
$f\oplus g \in B_{n+m}(A)$ with 
\begin{align*}
 (f\oplus g)(x_1,\dots,x_{n+m}) &= (f_1(x_1,\dots,x_n),\dots,f_n(x_1,\dots,x_n),\\
 &\hspace{10mm}g_1(x_{n+1},\dots,x_{n+m}),\dots
  \dots,g_m(x_{n+1},\dots,x_{n+m})).
\end{align*}
For $f,g\in B_n(A)$ we can compose $f\bullet g$ in $Sym(A^n)$. If $f,g$ have distinct arities we ``pad'' them with identity.
Let $f\in B_n(A)$ and $g\in B_m(A)$, $n< m$, and define
 \begin{eqnarray*}
f\bullet g = (f \oplus i_{m-n}) \bullet g
\end{eqnarray*}
Let $f\in B_n(A)$ and $g\in B_m(A)$, $n> m$, and define
\begin{eqnarray*}
f\bullet g =f \bullet  (g \oplus i_{n-m})
\end{eqnarray*}

We can thus serially compose all elements of $B(A)$.

We call a subset $C \subseteq B(A)$ that includes $\Pi$ 
and is closed under $\oplus$
and $\bullet$ a \emph{permutation clone} \cite{jerabek18}.
 These have also been investigated with ideas from category theory \cite{lafont93} and as \emph{memoryless computation} \cite{mgpc14,mgsr15}.
If we do not insist upon the inclusion of $\Pi$, then we have \emph{reversible iterative algebras}  \cite{bks17}.
If $C,D$ are permutation clones, with $C\subseteq D$, then we call $C$ a \emph{sub permutation clone} of $D$.
For a set $F \subseteq B(A)$ we write $\langle F \rangle$ as the smallest permutation clone that includes $F$, the permutation clone \emph{generated} by $F$.
% Let $g\in B(A)$. If $\langle g \rangle = B(A)$ then $g$ is \emph{universal}.

Let $C$ be a permutation clone. 
We write $C^{[n]} = C \cap B_n(A)$ for the elements of $C$ of arity $n$.
We will occasionally write $(a_1,\dots,a_n)\in A^n$ as $a_1a_2\dots a_n$ for brevity.
We observe that  $C$ is a $\N$-indexed collection of permutation
groups, $C^{[i]} \leq Sym(A^i)$, where the index corresponds to the gate arity.
A permutation clone $C$ can be seen as an inverse monoid $(C, \bullet, i_1)$, with
$\{i_n\mid n \in \N\}$ the set of idempotents.

In any permutation clone $C$, the unary part $C^{[1]}$ is found as a wreath product in all other parts, $C^{[1]} wr S_n \leq C^{[n]}$
because the wire permutations give us the right hand factor
while $f_1\oplus\dots\oplus f_n$ for $f_i \in C^{[1]}$
gives us the left hand factor.

\begin{example}
 Let $q$ be a prime power, $GF(q)$ the field of order $q$, 
$AGL_n(q)$ the collection of affine invertible maps of $GF(q)^n$ to itself.
We note that for all $m \in \N$, $AGL_n(q^m) \leq AGL_{nm}(q)$ \cite[p. 56]{mortimerdixon}.
For a prime $p$, let 
\begin{eqnarray*}\operatorname{Aff}(p^m) = \bigcup_{n\in \N} AGL_{nm}(p)
\end{eqnarray*}
 be the permutation clone of affine maps over $A=GF(p)^m$.
\end{example}

\begin{example}
 Let $A$ be a set. 
Let $\beta_1,\dots,\beta_n \in S_A$, 
so $(\beta_1,\dots,\beta_n) =\beta_1\oplus\dots\oplus\beta_n\in B_n(A)$.
Let 
\begin{eqnarray*}
\Deg(A)=\{\pi_\alpha \bullet (\beta_1,\dots,\beta_n) \mid n\in \N,\, \alpha\in S_n,\, \beta_i\in S_A\}
= \bigcup_n S_A wr S_n
\end{eqnarray*}
be the collection of \emph{essentially unary} or \emph{degenerate} maps.
\end{example}

\begin{example}
 Let $A$ be a set, let $o\in A$ be some arbitrary element. 
Let 
\begin{eqnarray*}
P_o(A)=\{f\in B(A) \mid f(o,\dots,o)=(o,\dots,o)\}
\end{eqnarray*}
 be the collection of \emph{$o$-preserving}  maps.
\end{example}

We say that a permutation clone $C \leq B(A)$ is \emph{borrow closed} if for all $f\in B(A)$, 
$f\oplus i_1 \in C$ implies that $f\in C$.
We say that a permutation clone $C \leq B(A)$ is \emph{ancilla closed} if for all $f\in B_n(A)$, 
$g \in C^{[n+1]}$ with some $a\in A$ such that for all $x_1,\dots,x_n \in A$, for all
$i\in\{1,\dots,n\}$, $f_i(x_1,\dots,x_n) = g_i(x_1,\dots,x_n,a)$ and
$g_{n+1}(x_1,\dots,x_n,a)=a$ implies that $f\in C$.
If a permutation clone is ancilla closed then it is borrow closed. 

For any prime power $q$, $\operatorname{Aff}(q)$ is borrow and ancilla closed.
For any set $A$, $\Deg(A)$ is borrow and ancilla closed.
For all $o\in A$, $P_o(A)$ is borrow but not ancilla closed.

\begin{lemma}
\label{lemma_borrowclosed}
 A permutation clone $C$ is borrow closed iff 
 if $f,f\oplus g \in C$ then $g\in C$.
\end{lemma}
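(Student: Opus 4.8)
The plan is to prove both implications directly, exploiting two features already recorded in the setup: each $C^{[n]}$ is a group (so inverses are available), and $\Pi\subseteq C$ with $C$ closed under $\oplus$ and $\bullet$ (so identity wires can be adjoined and shuffled). The definition of borrow closure only ever strips a single trailing wire $i_1$, so the work in each direction is to massage the general situation into that exact shape.

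For the implication from the cancellation property to borrow closure, I would start from the hypothesis $f\oplus i_1\in C$ of borrow closure and aim to apply the cancellation property with the identity as the left factor. Since $i_1\in\Pi\subseteq C$ and $C^{[n+1]}$ is a group, conjugating $f\oplus i_1$ by the wire permutation that moves the last coordinate to the front yields $i_1\oplus f\in C$. Now the cancellation property applies with left factor $i_1\in C$ and right factor $f$: from $i_1\in C$ and $i_1\oplus f\in C$ it gives $f\in C$, which is precisely what borrow closure demands.

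For the converse, assume $C$ is borrow closed and suppose $f\in C^{[n]}$ and $f\oplus g\in C^{[n+m]}$ with $g\in B_m(A)$. Because $C^{[n]}$ is a group, $f^{-1}\in C$, and since $i_m\in\Pi\subseteq C$ and $C$ is closed under $\oplus$, also $f^{-1}\oplus i_m\in C$. A componentwise computation (under the right-action convention for $\bullet$) gives $(f\oplus g)\bullet(f^{-1}\oplus i_m)=i_n\oplus g$, so $i_n\oplus g\in C$; a wire permutation swapping the two coordinate blocks then conjugates this to $g\oplus i_n\in C$. Finally I would peel off the identity wires one at a time: writing $g\oplus i_n=(g\oplus i_{n-1})\oplus i_1$ and invoking borrow closure removes a single wire, and iterating this $n$ times delivers $g\in C$.

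The componentwise identity $(f\oplus g)\bullet(f^{-1}\oplus i_m)=i_n\oplus g$ and the two block conjugations are routine once the composition convention is fixed, so I expect no real difficulty there. The step I would treat as the genuine obstacle is the interface with the \emph{definition} of borrow closure, which operates on exactly one trailing $i_1$: this is what forces both the inductive peeling of $n$ identities in the converse and the explicit block-swap needed to present the surviving identities in trailing position. Everything else is immediate from $\Pi\subseteq C$, closure under $\oplus$ and $\bullet$, and the group structure of each $C^{[n]}$.
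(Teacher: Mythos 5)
Your proof is correct and follows essentially the same route as the paper: invert $f$, pad with identity wires, compose to isolate $g$ (giving $i\oplus g$), and specialise the cancellation property to $f=i_1$ for the converse. The only difference is that you make explicit the wire-permutation block swaps and the one-at-a-time peeling of trailing identity wires, which the paper's proof leaves implicit.
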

\begin{proof}
 Let $g$ have arity $n$, $f$ have arity $m$.
 
 Suppose $C$ is borrow closed.
 Since $f\in C$ we know that $f^{-1} \in C$, so $f^{-1}\oplus i_n \in C$.
 Then $(f^{-1}\oplus i_n)\bullet(f \oplus g) = i_m\oplus g \in C$. By borrow closure, then $g\in C$ and we are done.
 
 For the other direction we always have that $f=i_1\in C$ and this is the definition of borrow closed.
 \qed
\end{proof}

Let $F\subseteq \OO(A)$ be a set of maps on $A$.
Define $PC(F) = \{f \in B(A) \mid \forall i \in 1,\dots,ar(f), \, f_i \in F\}$ as the set of permutations with component functions in $F$.
We can ask ourselves, when is $PC(F)$ a permutation clone?

From \cite{jerabek18} we know that permutation clones are defined as  dual to collections of weight maps, similar to the duality by relations for clones.

\begin{definition}
	Let $k\in \N$, let $(M,\cdot)$ be a commutative monoid. A \emph{weight map} is some map $w:A^k\rightarrow M$. Let $f \in B_n(A)$.
	Then $f$ \emph{respects}  $w$, $f \triangleright w$, 
	if for every $a \in A^{k\times n}$,
\begin{eqnarray}
	\prod_{i=1}^{n} w(a_{1i},\dots,a_{ki}) 
	    =  \prod_{i=1}^{n} w(f(a_{11},\dots,a_{1n})_{i},\dots,f(a_{k1},\dots,a_{kn})_{i}). \label{eqn_weight}
\end{eqnarray}	
Then $\Pol(w) = \{f \in B(A) \mid f  \triangleright w\}$ is then the set of \emph{polymorphisms} of the weight $w$.
\end{definition}

Such arrays will be written in square brackets, so $a$ above would be
\begin{equation}
a=\left[
\begin{array}{ccccc}
 a_{11} & \dots & a_{1n}   \\
 \vdots &          & \vdots  \\
 a_{k1} &  \dots & a_{kn} 
\end{array}
\right]
\end{equation}
We will often use shorthand, for instance if $a,b\in A^n$, then 
$\left[
\begin{array}{c}
 a   \\
b 
\end{array}
\right]
\in A^{2 \times n}$.

For simplicity we will write $w(a)$ for $\prod_{i=1}^{n} w(a_{1i},\dots,a_{ki})$ and 
\begin{eqnarray*}
f(a)= \left[
\begin{array}{c}
f(a_{11},\dots,a_{1n})   \\
\vdots \\
f(a_{k1},\dots,a_{kn}) 
\end{array}
\right]
\end{eqnarray*}
for the array obtained by applying $f$ to each row of $a$.
Then the equation (\ref{eqn_weight}) above becomes $w(a) = w(f(a))$.

\begin{definition}
 Let $c \in A^m$ and $f\in B_n(A)$. The \emph{controlled permutation} $CP(c,f)\in B_{m+n}(A)$
 applies $f$ to the last $n$ entries of its argument if the first $m$ entries match $c$.
\end{definition}

On the set $\{0,1\}$ the controlled permutation $CP(1,(01\;10))$ is a ternary map
that swaps the second and third arguments if the first argument is a 1. 
It is called the \emph{Fredkin gate} \cite{fredkintoffoli82} and has an important role in reversible computation.

\begin{example}
 Let $A=\{0,1\}$, $w:{0,1} \rightarrow (\N_0,+)$, $w(0)=0$, $w(1)=1$. Then $w$ counts the number of $1$s in a tuple,
 so $f \triangleright w$ iff $f$ conserves the number of $1$s, i.e. the weight of the tuple.
\end{example}

Such gates are  called \emph{conservative}, the Fredkin gate is an example.

Let $(\B,\wedge)$ be the two element monoid on $\{0,1\}$ with logical and  operation $\wedge$.

\begin{example}
 Let $(A,\leq)$ be a partially ordered set, $w:A^2 \rightarrow (\B,\wedge)$, 
 $w_\leq(a,b)=1$ if $a\leq b$ and $w_\leq(a,b)=0$ otherwise. Then for $f \in B_n(A)$,
 $f \triangleright w_\leq$ iff 
 $a\leq b \Rightarrow f(a)\leq f(b)$ iff
 $f$ 
 is a monotone map, i.e.\ an endomorphism and thus automorphism of $(A^n,\leq)$.
\end{example}

\begin{example}
 Let $(A,+)$ be an abelian group, $w:A^3 \rightarrow (\B,\wedge)$, 
 $w(a,b,c)=1$ if $a+b=c$ and $w(a,b,c)=0$ otherwise. Then for $f \in B_n(A)$, 
 $f \triangleright w$ iff $f$ 
 is a linear map, i.e.\ an automorphism of $(A,+)$.
\end{example}

\begin{example}
 Let $(A,+)$ be an abelian group, $w:A^4 \rightarrow (\B,\wedge)$, 
 $w(a,b,c,d)=1$ if $a+b=c+d$ and $w(a,b,c,d)=0$ otherwise. Then for $f \in B_n(A)$, 
 $f \triangleright w$ iff $f$ 
 is an affine permutation, i.e.\ the sum of an automorphism of $A$ and a constant map.
\end{example}

In particular, if $A=\Z_p$ for a prime $p$, then $\Pol(w) = \Aff(A)$.

\begin{definition}
\label{exampleRelationWeight}
Let $R \subset A^k$ be a relation.
Let $w_R:A^k\rightarrow (\B,\wedge)$ with $w_R(a)$ true iff $a\in R$.
Then $f\triangleright w_R$ iff every component $f_i$ respects the relation $R$, i.e.\ $f_i\triangleright R$ in the clone sense.
\end{definition}

We call such polymorphisms \emph{relationally defined}.
This type of weight will be the main concern of this paper.

Then $PC(\Pol(R \mid R \in \RR))$  $= \Pol(w_R\mid R \in \RR)$.

%((We need some note or result here that $f\triangleright w_R$ iff it maps each array with columns in $R$
%to another one with all columns in $R$, and arrays with at least one column not in $R$ to an array with at least one column not in $R$ and that these two definitions are equivalent.))
%
%((formulate this as a Lemma))

We note  that in order to check whether $f\in \Pol(w_R)$ we need only check for matrices 
with columns in $R$, that is, for matrices $M$ such that $w_R(M)=1$ and $w_R(f(M))=1$. By reversibility, the other matrices all map to $0$.

We note that we can look at the individual arities of a relationally defined permutation clone 
as the automorphism group of a relational structure.

\begin{lemma}
\label{lemma_AutR}
Let $\A=(A,R)$, $R \subseteq A^k$  be a relational structure.
Let $\A^n=(A^n,R^n)$ where $R^n= \{ (a_1,\dots,a_k) \mid a_i\in A^n,\, (a_{i1},\dots,a_{ik})\in R\; \forall i\in \{1,\dots,n\}\}$.
Then $\Pol(w_R)^{[n]} = Aut(\A^n)$.
\end{lemma}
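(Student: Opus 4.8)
The plan is to unravel the definition of $\Pol(w_R)^{[n]}$ directly and match it, term for term, against the definition of $\Aut(\A^n)$. Fix $f\in Sym(A^n)=B_n(A)$; since both sides of the claimed equality are subsets of $Sym(A^n)$, it suffices to prove $f\triangleright w_R$ iff $f\in\Aut(\A^n)$. First I would set up the natural dictionary between arrays and tuples of rows. An array $a\in A^{k\times n}$ with entries $a_{ji}$ ($1\le j\le k$, $1\le i\le n$) corresponds bijectively to the $k$-tuple of its rows $(r_1,\dots,r_k)\in (A^n)^k$, where $r_j=(a_{j1},\dots,a_{jn})\in A^n$; as $a$ ranges over $A^{k\times n}$, the tuple $(r_1,\dots,r_k)$ ranges over all of $(A^n)^k$. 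Under this dictionary the $i$-th coordinate of $(r_1,\dots,r_k)$ is exactly the $i$-th column of $a$, so the definition of $R^n$ as the coordinatewise power of $R$ gives
\[
 w_R(a)=\prod_{i=1}^{n} w_R(a_{1i},\dots,a_{ki}),
 \qquad\text{hence}\qquad
 w_R(a)=1 \iff (r_1,\dots,r_k)\in R^n,
\]
because a product in $\B=(\{0,1\},\wedge)$ equals $1$ precisely when every factor does.

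Next I would recall that $f(a)$ is the array obtained by applying $f$ to each row of $a$, so the tuple of rows of $f(a)$ is $(f(r_1),\dots,f(r_k))$, which is exactly the image of $(r_1,\dots,r_k)$ under the coordinatewise (diagonal) action of $f$ on $(A^n)^k$. The defining condition $f\triangleright w_R$, namely $w_R(a)=w_R(f(a))$ for every $a\in A^{k\times n}$, therefore becomes, after passing through the dictionary and letting $(r_1,\dots,r_k)$ run over all of $(A^n)^k$: for all $b_1,\dots,b_k\in A^n$,
\[
 (b_1,\dots,b_k)\in R^n \iff (f(b_1),\dots,f(b_k))\in R^n .
\]
This is verbatim the statement that the bijection $f$ of $A^n$ is an automorphism of the relational structure $\A^n=(A^n,R^n)$, i.e.\ $f\in\Aut(\A^n)$, which closes the equivalence.

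The biconditional in the last display is obtained for free: since both weights are $\B$-valued, the equality $w_R(a)=w_R(f(a))$ already encodes \emph{both} implications simultaneously, so no separate finiteness/reversibility argument is strictly required. (This is precisely the content of the earlier remark that, by reversibility, one need only verify preservation on the arrays with $w_R=1$, i.e.\ that $f$ sends $R^n$-tuples to $R^n$-tuples; the remaining arrays are handled automatically.) I would also note that the component characterisation of Definition~\ref{exampleRelationWeight} gives an equivalent route, since ``every component $f_i$ respects $R$'' is another packaging of the same array condition. There is no deep obstacle here: the only point needing care is the index bookkeeping, namely keeping straight the two roles of the array $a$ — its $k$ \emph{rows} are the arguments of the $k$-ary relation $R^n$ on $A^n$, while its $n$ \emph{columns} are the instances over which the weight product is formed — and checking that the column/coordinate identification is the one demanded by the definition of $R^n$.
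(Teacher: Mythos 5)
Your proof is correct and follows essentially the same route as the paper's: a direct chain of equivalences unwinding the definition of $f\triangleright w_R$ via the row/column dictionary between arrays in $A^{k\times n}$ and $k$-tuples in $(A^n)^k$. If anything you are slightly more careful than the paper, which only writes the forward implication ($a\in R^n \Rightarrow f(a)\in R^n$) and implicitly relies on finiteness to upgrade a bijective endomorphism to an automorphism, whereas you correctly observe that the $\B$-valued equality $w_R(a)=w_R(f(a))$ already yields both directions.
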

\begin{proof}
 \begin{eqnarray*}
f \in \Pol(w_R)^{[n]} & \Leftrightarrow & f\in B_n(A),\, \forall a\in A^{k\times n},\, \\
&& \hspace{10mm}(a_{1i},\dots,a_{k,i})\in R\, \forall i,
      (f(a)_{1i},\dots,f(a)_{k,i})\in R\, \forall i \\
      &\Leftrightarrow & \forall a \in \A^n,\, f(a)\in \A^n \\
      &\Leftrightarrow & f \in \Aut(\A^n)
\end{eqnarray*}
\qed
\end{proof}

This also reminds us that  $f\triangleright w_R$ iff every array with columns in $R$ is mapped by $f$ to an
array with columns in $R$, and every array with at least one column not in $R$ is mapped to an array with at least one column not in $R$.

% \begin{example}
%  Let $P_k$ be the monoid of partitions of $\{1,\dots,k\}$.
%  Let $\rho:A^k\rightarrow P_k$ mapping a vector to the partition that the vector encodes by its values, for instance
%  $\rho(0,1,0,0,1)= 134\vert25$.
% \end{example}

An algebraic structure $(S,+,*,0,1)$ is called a \emph{(commutative) semiring} if 
$(S,+,0)$ is a commutative monoid, $(S,*,1)$ is a commutative monoid and the two distribution laws
hold, i.e.\  for all $a,b,c \in S$
\begin{eqnarray}
a*(b+c) &= a*b + a* c \\
(a+b)*c &= a*c+b*c
\end{eqnarray}
Examples include commutative rings, fields, bounded lattices and $(S,+,+,0,0)$ when $(S,+,0)$ is
a join semilattice with $0$.

Je\v{r}\'{a}bek  defines a closure process for sets of weights
\cite[p. 11]{jerabek18}.

\begin{definition}
\label{defn_coclone}
 A set $D$ of weight maps is \emph{closed} if:
 \begin{enumerate}
  \item If $w:A^k\rightarrow M$ is in $D$, and $\rho:\{1,\dots,k\}\rightarrow \{1,\dots,l\}$, then the weight
  $w\circ \rho:A^l\rightarrow M$ is in $D$, where $w\circ \rho: (x_1,\dots,x_l) \mapsto w(x_{\rho(1)},\dots,x_{\rho(k)})$.
  \item If $w:A^k\rightarrow M$ is in $D$, and $\phi:M\rightarrow N$ is a commutative monoid homomorphism, then $\phi\circ w_k:A^k\rightarrow N$ is in $D$.
  \item If $w:A^k\rightarrow M$ is in $D$, and $N\leq M$ is a submonoid with $w(A^k) \subseteq N$, then $w:A^k\rightarrow N$ is in $D$.
  \item If $w_i:A^k\rightarrow M_i$ is in $D$ for all $i\in I$, then $w: A^k \rightarrow \prod_{i \in I} M_i$ is in $D$.
  \item The weight $c_1:A \rightarrow (\N_0,+)$ with $c_1(a)=1$ for all $a\in A$ is in $D$.
  \item The weight $\delta:A^2 \rightarrow (\B,\wedge)$ with $\delta(a,b)$ true iff $a=b$ is in $D$.
  \item If $w:A^k\rightarrow (M,\cdot)$ is in $D$, $k\geq 2$, and $(M,\boxplus,\cdot)$ is a semiring, then $w^\boxplus$ is in $D$,
  where $w^\boxplus:A^{k-1}\rightarrow M$, 
  \begin{align*}
  w^\boxplus(a_1,\dots,a_{k-1}) =  \underset{a\in A}\bigboxplus w(a_1,\dots,a_{k-1},a)      %\sum^\boxplus_{a\in A} w(a_1,\dots,a_{k-1},a)
  \end{align*}
 \end{enumerate}
\end{definition}
Je\v{r}\'{a}bek calls \cite[Defn 5.17]{jerabek18} a set of weight maps that are closed in this way a \emph{permutation co-clone}.
% Because $c_1$ and $\delta$ are in every permutation co-clone, we refer to them as constants.
% The weight $c_1$ measures arity, so $f:A^n\rightarrow A^m$ and $f\triangleright c_1$ implies that $n=m$.
% The weight $\delta$ ensures injectivity, as $f(a_{11},\dots,a_{1n})=f(a_{21},\dots,a_{2n})$ implies
% $\sum_i\delta(f_1(a_{11},\dots,a_{1n}),f_2(a_{21},\dots,a_{2n})$ is true, so $\sum_i\delta(a_{1i},a_{2i})$ is true
% so $a_{1i}=a_{2i}$ for all $i$.

The important result is then the following, telling us that every permutation clone is defined
by a collection of weights.

% \begin{theorem}[\protect{\cite[something]{source}}] ... \end{theorem} works

\begin{theorem}[\protect{\cite[Thm 5.18]{jerabek18}}]
\label{theoremJerabek}
Let $C$ be a permutation clone. Then there exists a permutation co-clone $D$ such that for all $f \in B(A)$,  $f \in C$ iff $f  \triangleright w$ for all $w \in D$.
\end{theorem}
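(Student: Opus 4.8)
The plan is to prove this as the nontrivial half of a Galois connection between permutation clones and sets of weight maps, exactly parallel to the $\Pol$--$\Inv$ duality for ordinary clones recalled above. For a permutation clone $C$ set $\operatorname{wInv}(C)=\{w\mid g\triangleright w \text{ for all } g\in C\}$ and take $D:=\operatorname{wInv}(C)$. One inclusion is immediate from the definitions: if $f\in C$ then $f\triangleright w$ for every $w\in\operatorname{wInv}(C)$, so $f\in\Pol(\operatorname{wInv}(C))$ and hence $C\subseteq\Pol(D)$. It therefore remains to show, first, that $D$ is a permutation co-clone (so that the statement is well posed), and second, the separation $\Pol(D)\subseteq C$, i.e.\ that every $f\notin C$ is detected by some weight respected by all of $C$.

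That $D$ is closed under the seven operations of Definition~\ref{defn_coclone} I expect to be routine, and I would dispatch it first. In each case the defining identity $w(a)=w(g(a))$ is simply inherited: precomposing with a variable map $\rho$ reindexes rows; a monoid homomorphism $\phi$ or a restriction to a submonoid is applied to both sides of the identity; and products are handled coordinatewise. The two base weights lie in $D$ because every bijection preserves both the total count $c_1$ and the equality weight $\delta$ of a pair of rows, and the semiring marginalisation $w^{\boxplus}$ is respected by every $g\in B(A)$, using bijectivity to sum out the last coordinate. Thus $D=\operatorname{wInv}(C)$ is a permutation co-clone; this direction is formal.

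The crux is the separation. Given $f\notin C$ of arity $n$, I would build a weight $w\in\operatorname{wInv}(C)$ with $f\not\triangleright w$, mirroring the classical construction of $\Inv$ for clones, where one takes the relation consisting of the function tables of the $n$-ary members of the clone. Write $X=A^n$ and $k=|A|^n$, enumerate $X=\{x_1,\dots,x_k\}$, and let $P\in A^{k\times n}$ be the array whose $j$-th row is $x_j$; then for $g\in B_n(A)$ the columns of $g(P)$ are precisely the tables of the components $g_1,\dots,g_n$. For a violation a single array suffices, and $P$ is the natural witness: since $f$ is not a wire permutation (those lie in $\Pi\subseteq C$), the multiset of columns of $f(P)$ differs from that of $P$. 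So at the level of separating $P$ from $f(P)$ there is nothing to do; the entire difficulty is to make the separating weight invariant under all of $C$.

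This is where weights, rather than relations, are forced. Because reversibility forbids free duplication and deletion of variables, the component functions of the gates in $C$ need not form a clone, and the Boolean weight $w_R$ of the clone-style relation $R=\{\,g(P)\mid g\in C^{[n]}\,\}$ is in general \emph{not} respected by all of $C$. The remedy is to record column data with multiplicity in a suitable free commutative monoid $M$, choosing a $C$-invariant labelling $w\colon A^k\to M$ of the columns that is coarse enough that the product $\prod_i w(\text{column}_i)$ is preserved by every $g\in C$ of every arity, yet fine enough that $P$ and $f(P)$ still receive different values. The main obstacle is precisely the existence and verification of this labelling: one must control how $\oplus$ and $\bullet$ act on arrays and choose the monoid quotient so that all of $C$'s closure operations preserve the recorded counting invariant, while $f\notin C^{[n]}$ guarantees that it is not annihilated. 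Establishing this completeness of counting invariants for reversible gates is the genuine content of the theorem; the Galois setup and the co-clone closure are formal by comparison.
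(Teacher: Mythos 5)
The paper does not prove this statement at all: it is imported verbatim from Je\v{r}\'{a}bek (Theorem 5.18 of \cite{jerabek18}), so there is no in-paper argument to compare against. Measured against Je\v{r}\'{a}bek's actual proof, your outline has the right architecture --- the Galois setup with $D=\operatorname{wInv}(C)$, the routine verification that $\operatorname{wInv}(C)$ is closed under the operations of Definition~\ref{defn_coclone} (one quibble: $w^{\boxplus}$ is not respected by every $g\in B(A)$, only by those $g$ respecting $w$, with bijectivity of $g$ on the marginalised row doing the work), and the correct diagnosis that the Boolean weight of the orbit relation $R=\{g(P)\mid g\in C^{[n]}\}$ fails because reversibility forbids the variable identifications that make the classical $\Inv$ construction a relation.

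The genuine gap is that you stop exactly where the theorem begins. You assert the existence of ``a $C$-invariant labelling $w\colon A^{k}\to M$ coarse enough that every $g\in C$ preserves the product, yet fine enough that $P$ and $f(P)$ receive different values,'' and then declare establishing this to be ``the genuine content of the theorem.'' That is a restatement of the separation claim, not a proof of it. The missing construction is concrete: take $M$ to be the free commutative monoid on the generator set $A^{k}$ (with $k=|A|^{n}$) modulo the congruence generated by all pairs $\bigl(\prod_i w_0(\mathrm{col}_i(a)),\,\prod_i w_0(\mathrm{col}_i(g(a)))\bigr)$ for $g\in C$ of arbitrary arity and arbitrary arrays $a$, where $w_0$ is the canonical map to generators; invariance under $C$ then holds by fiat. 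The nontrivial lemma --- the heart of Je\v{r}\'{a}bek's proof --- is that this quotient does not collapse $w_0(P)$ and $w_0(f(P))$ when $f\notin C$. Proving it requires analysing the generated congruence: one must show that a chain of elementary rewrites, each of which may act on a subproduct and introduce extraneous columns, can be assembled into a single element of $C$ acting on a padded array, using closure of $C$ under $\bullet$, $\oplus$ and the wire permutations $\Pi$ to recombine the steps. Without that analysis nothing rules out the congruence identifying all arrays with the same column multiset, in which case the weight would separate nothing. As written, the proposal is an accurate roadmap but not a proof.
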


That is, a permutation clone is defined precisely by the permutation co-clone that it respects.
We write $\Inv(C)$, the \emph {invariants} of $C$, for the collection of weight maps that  are respected by a set of bijections $C \subseteq B(A)$
and $\Pol(D) \subseteq B(A)$, the \emph{polymorphisms} that respect $D$, for the set of bijections that respect a collection of weight maps $D$.
$D$ is closed iff $\Inv(\Pol(D))=D$ and $C$ is a permutation clone iff $\Pol(\Inv(C))=C$.
If $w$ is a weight mapping, we write $\Pol(w)$ to mean $\Pol(\langle w \rangle)$ where $\langle w \rangle$
is the permutation co-clone generated by $w$.
$\Pol$ and $\Inv$
By duality, a maximal permutation clone is defined by a minimal co-clone, which is generated by one weight.

In order to reduce confusion we will write $\Pol(R)$ to mean the clone of mappings that respect the relation $R$,
 $\Pol(w_R)$ to mean the permutation clone of bijections that respect the relation $R$ encoded as a weight function $w_R$, and $\PPol(R)=\Pol(w_R)$ (for permutation polymorphism) as shorthand, especially when talking about multiple relations. Thus $\PPol(R,S,T) = \Pol(w_R) \cap \Pol(w_S) \cap \Pol(w_T)$.

% Note that the example with partitions above is generated by $\delta$, one of the constant weights above.
% Thus the partition weight function will not force anything more than a mapping being well defined,
% but we will see below that the constant weights can be used effectively.

It is still unknown what precise form of weight is necessary in order to
define ancilla and borrow closed permutation clones.
Some sufficient conditions (for example, cancellative monoids) are known (see below), but they 
are not necessary \cite[Section 5.2]{jerabek18}.

\begin{theorem}
Let $w:A^k\rightarrow (M,\cdot)$ be a weight map, let $a\in A^k$ be such that $w(a)$ is cancellative in $M$.
Then $Pol(w)$ is borrow closed.
\end{theorem}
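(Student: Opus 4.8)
The plan is to argue directly from the definition of borrow closure: I must show that if $f \oplus i_1 \in \Pol(w)$ for some $f \in B_n(A)$, then already $f \in \Pol(w)$. Unwinding the definition of $\Pol(w)$, this means deriving $w(b) = w(f(b))$ for every array $b \in A^{k \times n}$ from the hypothesis that $f \oplus i_1 \triangleright w$, i.e.\ that $w(c) = w((f\oplus i_1)(c))$ holds for every array $c$ with $n+1$ columns.

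The key step is to understand how appending a single column interacts with the weight. Fix an arbitrary $b \in A^{k\times n}$ and an arbitrary column $d \in A^k$, and consider the array $c = [\,b \mid d\,] \in A^{k\times(n+1)}$. Since $f\oplus i_1$ acts on each row of $c$ by applying $f$ to the first $n$ entries and leaving the last entry fixed, we have $(f\oplus i_1)(c) = [\,f(b) \mid d\,]$. Because $w$ of an array is the product of the weights of its columns, this gives the factorisations $w(c) = w(b)\cdot w(d)$ and $w((f\oplus i_1)(c)) = w(f(b))\cdot w(d)$. Applying the hypothesis $f\oplus i_1 \triangleright w$ to this $c$ therefore yields
\[
w(b)\cdot w(d) = w(f(b))\cdot w(d),
\]
and this holds for every $b$ and every $d$.

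Finally I bring in the cancellative element. Specialising to $d = a$, the hypothesis that $w(a)$ is cancellative in the commutative monoid $M$ permits cancelling the common factor $w(a)$ from $w(b)\cdot w(a) = w(f(b))\cdot w(a)$, leaving $w(b) = w(f(b))$. Since $b$ was arbitrary, $f \triangleright w$ and hence $f \in \Pol(w)$, establishing borrow closure.

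I do not expect a real obstacle here; the argument is short. The one place demanding care is the bookkeeping in the middle step, namely checking that padding $f$ with the identity wire $i_1$ corresponds precisely to adjoining one column that $f\oplus i_1$ leaves untouched, so that its weight contributes the same single factor $w(d)$ on both sides and can be isolated. It is worth stressing that the theorem exploits only a single cancellative value: one column $a$ with $w(a)$ cancellative is enough to perform the cancellation, regardless of how badly the remaining values of $w$ may fail to be cancellative.
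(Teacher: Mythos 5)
Your proof is correct and follows essentially the same route as the paper: adjoin the column $a$ to an arbitrary array $b$, use $f\oplus i_1\triangleright w$ to obtain $w(b)\cdot w(a)=w(f(b))\cdot w(a)$, and cancel $w(a)$. The only cosmetic difference is that you first derive the identity for an arbitrary appended column $d$ before specialising to $d=a$, whereas the paper appends $a$ directly.
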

\begin{proof}
Suppose $f\oplus i_1 \in Pol(w)$, $f\in B_n(A)$.
Let $b \in A^{k\times n}$.
Let $d\in A^{k\times (n+1)}$ be the matrix consisting of  $b$ with the $n+1$-th column
being all $a$, that is, $d=b \oplus a$.
Then because $f\oplus i_1 \in Pol(w)$, $w(d) = w((f\oplus i_1)(d))$, thus
$w(b) \cdot w(a) = w((f\oplus i_1)(d))=w(f(b)\oplus a)= w(f(b))\cdot w(a)$.
Now because $w(a)$ is cancellative, this implies that $w(b)=w(f(b))$ so $f \triangleright w$
and thus $f\in Pol(w).$

Thus $Pol(w)$ is borrow closed and we are done.
\qed
\end{proof}
There is a similar result \cite{jerabek18} for ancilla closure.

\begin{theorem}
\label{thm_ancillaclosed}
Let $w:A^k\rightarrow (M,+)$ be a weight map,  such that $w(a,\dots,a)$ is cancellative in $M$ for all $a\in A$.
Then $Pol(w)$ is ancilla closed.
\end{theorem}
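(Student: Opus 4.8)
The plan is to imitate the proof of the borrow closure theorem above, replacing the padding $f\oplus i_1$ by the ancilla configuration. So suppose we are given $f\in B_n(A)$, a gate $g\in \Pol(w)^{[n+1]}$, and an element $a\in A$ witnessing the ancilla condition: for all $x_1,\dots,x_n\in A$ and all $i\in\{1,\dots,n\}$ we have $f_i(x_1,\dots,x_n)=g_i(x_1,\dots,x_n,a)$, and moreover $g_{n+1}(x_1,\dots,x_n,a)=a$. Since $\Pol(w)$ is automatically a permutation clone, it suffices to prove $f\triangleright w$, i.e.\ $w(b)=w(f(b))$ for every test matrix $b\in A^{k\times n}$.

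First I would fix such a $b$ and build $d\in A^{k\times(n+1)}$ by appending to $b$ a new $(n+1)$-th column all of whose entries equal the ancilla value $a$; as a $k$-tuple this column is the constant tuple $(a,\dots,a)$. Because $g\in\Pol(w)$, the defining weight equation applied to $d$ yields $w(d)=w(g(d))$.

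The key step is to evaluate both sides using that $w$ of a matrix is the monoid sum over its columns. On the left, the columns of $d$ are the $n$ columns of $b$ together with the constant column $(a,\dots,a)$, so $w(d)=w(b)+w(a,\dots,a)$. On the right, $g(d)$ is computed row by row, and every row of $d$ ends in the ancilla value $a$; hence the ancilla hypothesis says that $g$ sends the row $(b_{i1},\dots,b_{in},a)$ to $\bigl(f_1(b_{i1},\dots,b_{in}),\dots,f_n(b_{i1},\dots,b_{in}),a\bigr)$, i.e.\ the first $n$ outputs reproduce $f$ applied to row $i$ of $b$ and the last output is again $a$. Therefore $g(d)$ has first $n$ columns equal to $f(b)$ and last column again the constant tuple $(a,\dots,a)$, so $w(g(d))=w(f(b))+w(a,\dots,a)$. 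Combining the two evaluations gives $w(b)+w(a,\dots,a)=w(f(b))+w(a,\dots,a)$, and since $w(a,\dots,a)$ is cancellative in $M$ by hypothesis, cancelling it leaves $w(b)=w(f(b))$. As $b$ was arbitrary, $f\triangleright w$, so $f\in\Pol(w)$ and $\Pol(w)$ is ancilla closed.

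I expect no serious obstacle, as the argument is a direct analogue of the borrow case; the only place needing care is the row-wise evaluation of $g(d)$, where one must invoke both halves of the ancilla condition (the first $n$ components reproducing $f$ and the last component fixing $a$) to conclude that the appended column remains the constant tuple $(a,\dots,a)$ after applying $g$. This is also precisely why the hypothesis requires $w(a,\dots,a)$ to be cancellative for \emph{every} $a\in A$ rather than for a single chosen element: unlike the borrow case, the padding column here is dictated by the given $g$ through its ancilla value, so we cannot pick a convenient element and must be able to cancel $w(a,\dots,a)$ for whichever $a$ arises.
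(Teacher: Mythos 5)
Your proof is correct. The paper does not actually supply its own proof of this theorem (it defers to Je\v{r}\'{a}bek), but your argument is exactly the natural analogue of the paper's proof of the preceding borrow-closure theorem: append the constant ancilla column $(a,\dots,a)$, use both halves of the ancilla condition to see that $g(d)$ is $f(b)$ with that same column reattached, and cancel $w(a,\dots,a)$; your closing remark about why cancellativity is needed for \emph{every} $a\in A$ is also on point.
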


These conditions are sufficient but not neccesary. The following example shows this for 
borrow closure.

\begin{example}
 Let $A=\{0,1\}$, let $w:A\rightarrow \B^2$, $w(0)=(1,0)$ and $w(1)=(0,1)$.
 Neither of these two images is cancellative, but $\Pol(w)$ is borrow closed.
 This can be seen because the weight $v:A^2\rightarrow \B^2$ defined by
 
\begin{equation*}
\begin{array}{|c|c|}
\hline
00 &  10  \\
01 &  11   \\
10 &  00 \\
11 & 01 \\
\hline
\end{array}
\end{equation*}
 
 is in the permutation co-clone defined by $w$, as is the weight $\bar v:A^2\rightarrow \B$ with 
 $\bar v (a)= 1 $ iff $a=01$. It can also be seen that $w$ is in the permutation co-clone defined by $v$ and $\bar v$.
 Both $v$ and $\bar v$ have cancellative images, showing that $\Pol(v)$ is borrow closed.
 
 This is the set of permutations that fix the zero vector and the vector of all 1,
  i.e.\ $f\in \Pol(w)$ iff $f(0\dots 0) = 0\dots 0$ and $f(1\dots 1) = 1\dots 1$.
\end{example}

 \section{Some basic properties}

In this section we will look at some properties of the collection of 
relationally defined permutation clones. 
Firstly we will show that all permutation co-clones contain a relational part that is closed
as a co-clone, thus permutation co-clone closure is stronger than relational co-clone closure.
Then we will examine relational clone closure and show that 
maximal borrow closed permutation clones are either relationally defined or 
have cancellative weights.

\begin{lemma}
\label{lemma_relCloneClosure}
 Let $\RR$ be a collection of relations on $A$.
 Let $Wt(\RR) = \{w_R \mid R \in \RR\}$ be the collection of weights obtained from $\RR$.
 Let $D$ be a permutation coclone.
 Let $Rl(D) = \{R \mid w_R \in D\}$ be the collection of relations which have characteristic functions in $D$.
 Then $\langle \RR \rangle_{rc} \subset Rl(\langle Wt(\RR)\rangle_{pcc}$ where $rc$ means relation clone (coclone) closure and $pcc$ means permutation coclone closure.
 Moreover $Rl(\langle Wt(\RR)\rangle_{pcc})$ is a coclone.
\end{lemma}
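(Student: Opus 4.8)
The plan is to prove the following slightly more general statement, from which both assertions of the lemma follow immediately: for \emph{every} permutation co-clone $D$, the family $Rl(D) = \{R \mid w_R \in D\}$ is a relational co-clone. Granting this, set $D = \langle Wt(\RR)\rangle_{pcc}$. For each $R \in \RR$ we have $w_R \in Wt(\RR) \subseteq D$, so $R \in Rl(D)$, whence $\RR \subseteq Rl(D)$; since $Rl(D)$ is then a relational co-clone containing $\RR$ and $\langle \RR \rangle_{rc}$ is by definition the least such, we get $\langle \RR \rangle_{rc} \subseteq Rl(D)$. Thus the ``moreover'' clause and the displayed containment are one and the same fact, and the whole proof reduces to closing $Rl(D)$ under the five relational-clone operations.

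I would verify these five operations one at a time, each time translating a relational operation into operations (1)--(7) of Definition \ref{defn_coclone} and using throughout that $w_R$ is the characteristic map valued in $(\B,\wedge)$, with $R \in Rl(D) \Leftrightarrow w_R \in D$. Permutation of entries is simply substitution (operation (1)) along a bijective $\rho$, since $w_R \circ \rho$ is the characteristic map of the re-indexed relation. For intersection of $R,S \subseteq A^k$ I would form $(w_R, w_S)\colon A^k \to \B^2$ by the product operation (4) and then apply the monoid homomorphism $\wedge\colon (\B,\wedge)^2 \to (\B,\wedge)$, $(x,y)\mapsto x\wedge y$, via operation (2); the result is $w_{R\cap S}$. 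Cartesian products then come for free: substitution (1) along an injection turns $w_R$ and $w_S$ into the characteristic maps of $R \times A^l$ and $A^k \times S$ (padding with dummy coordinates), and intersecting these as above produces $w_{R \times S}$.

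The genuinely interesting steps are projection and the equality relation. For projection of $R \subseteq A^k$ with $k \ge 2$, the key observation is that $(\B,\vee,\wedge)$ is a commutative semiring, so the summation operation (7) applies with $\boxplus = \vee$; then $w_R^{\boxplus}(a_1,\dots,a_{k-1}) = \bigvee_{a \in A} w_R(a_1,\dots,a_{k-1},a)$ equals $1$ exactly when some extension lies in $R$, i.e.\ $w_R^{\boxplus}$ is the characteristic map of the projected relation (combined with permutation of entries, already handled, this deletes any chosen coordinate, not merely the last). For the required equality relation $\{(a,a,b) \mid a,b\in A\}$ I would assemble it from pieces: the diagonal $\{(a,a)\}$ is $w_\delta$ from operation (6); the full unary relation $A$ arises from the constant weight $c_1\colon A \to (\N_0,+)$ of operation (5) by composing with the homomorphism $(\N_0,+)\to(\B,\wedge)$, $n \mapsto 1$, via operation (2); and then $\{(a,a,b)\} = \{(a,a)\} \times A$ lies in $Rl(D)$ by the cartesian-product closure just established.

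The main obstacle I anticipate is getting the projection step exactly right: one must pick out the correct semiring structure $(\B,\vee,\wedge)$ so that Je\v{r}\'{a}bek's generic semiring-summation operation (7) specialises precisely to Boolean existential projection, and one must respect its arity constraint $k \ge 2$, treating or explicitly excluding the boundary case where projecting a unary relation would produce a nullary one. The remaining verifications are routine monoid-homomorphism checks.
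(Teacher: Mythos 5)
Your proposal is correct and follows essentially the same route as the paper: both verify that each of the five relational co-clone operations (permutation of entries via substitution, intersection and cartesian product via the monoid product with the homomorphism $\wedge\colon\B^2\to\B$, projection via the semiring $(\B,\vee,\wedge)$ and operation (7), and the equality relation via $\delta$) is simulated by the permutation co-clone operations of Definition~\ref{defn_coclone}. The only differences are cosmetic --- you make explicit the reduction of both claims to ``$Rl(D)$ is a co-clone for any permutation co-clone $D$,'' and you build the ternary equality relation as $\delta\times A$ rather than by substituting $\delta$ along the injection $\{1,2\}\hookrightarrow\{1,2,3\}$ as the paper does.
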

\begin{proof}
We show that each of the coclone closure operations can be obtained from the 
permutation coclone closure operations.

 The permutation of entries follows from the first closure operation.
 
 Projection can be obtained by using the semigroup closure with the semigroup $(\B,\vee,\wedge)$.
 
 Let $R\subseteq A^k,\,S\subset A^l$ be two relations. Then $w_R$ and $w_S$ are two characteristic functions for these relations.
 Let $p_1:\{1,\dots,k\} \rightarrow \{1,\dots,k+l\}$ be defined by $p_1:i\mapsto i$ and
 $p_2:\{1,\dots,l\} \rightarrow \{1,\dots,k+l\}$ be defined by $p_2:i\mapsto k+i$.
 Then $w_R^{p_1}:A^{k+l} \rightarrow \B$ and $w_S^{p_2}:A^{k+l} \rightarrow \B$ are in the permutation
 coclone.
 We then create the product $(w_R^{p_1},w_S^{p_2}):A^{k+l} \rightarrow \B\times \B$ that is
 in the permutation coclone, then apply the homomorphism $\wedge: \B\times \B\rightarrow \B$
 to obtain a characteristic function of the cartesian product of $R$ and $S$.
 
 Let $R,S \subseteq A^k$. 
 Let $w_R$ and $w_S$ be the characteristic functions for these relations.
Then $(w_R,w_S): A^k \rightarrow \B \times \B$ is in the permutation coclone, we apply the homomorphism
$\wedge: \B\times \B\rightarrow \B$
 to obtain a characteristic function of the intersection $R \cap S$.
 
 The equality relation characteristic function $\delta$ is in the permutation coclone.
 Let $p:\{1,2\}\rightarrow \{1,2,3\}$ be the identity injection, so $\delta \circ p$ is the characteristic function
 of the equality relation is in a co-clone.
 \qed
\end{proof}

Thus the lattice of relationally defined permutation clones will be a
homomorphic image of the lattice of clones.

% 
% 
% We will discuss why we think the question as to relationally defined 
% permutation clones is interesting. The first has to do with borrow and ancilla closure.
% The second is more structural.
% 
Relationally defined permutation clones have a stronger closure property.

\begin{lemma}
Every relationally defined permutation clone is borrow closed.
\end{lemma}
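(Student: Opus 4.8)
The plan is to reduce the statement to the cancellative borrow-closure theorem proved above, applied one relation at a time, and then to close under intersection.

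First I would unpack the definition. A relationally defined permutation clone has the form $C = \Pol(w_R \mid R \in \RR) = \bigcap_{R \in \RR} \Pol(w_R)$ for some set $\RR$ of relations on $A$, using the identity $PC(\Pol(R \mid R \in \RR)) = \Pol(w_R \mid R \in \RR)$ noted earlier. Hence it suffices to prove two things: that each single-relation clone $\Pol(w_R)$ is borrow closed, and that an arbitrary intersection of borrow closed permutation clones is again borrow closed.

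For a fixed relation $R \subseteq A^k$ recall that $w_R : A^k \to (\B, \wedge)$. The crucial observation is that the identity of $(\B,\wedge)$ is $1$, and $1$ is cancellative (from $1 \wedge x = 1 \wedge y$ one recovers $x = y$), whereas $0$ is not. If $R$ is nonempty, I would pick any $a \in R$; then $w_R(a) = 1$ is cancellative, and the cancellative borrow-closure theorem immediately yields that $\Pol(w_R)$ is borrow closed. The degenerate case $R = \emptyset$ must be handled separately, since then no such $a$ exists: there $w_R$ is constantly $0$, so the weight equation $w_R(b) = w_R(f(b))$ holds trivially for every array $b$ and every bijection $f$, whence $\Pol(w_R) = B(A)$, which is borrow closed.

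For the intersection step, suppose $C = \bigcap_i C_i$ with each $C_i$ borrow closed and $f \oplus i_1 \in C$. Then $f \oplus i_1 \in C_i$ for every $i$, so borrow closure of $C_i$ gives $f \in C_i$ for every $i$, hence $f \in C$; thus $C$ is borrow closed. Combining the two steps finishes the proof. The only genuinely subtle point, and the step I expect to be the main obstacle, is identifying the correct cancellative value of $w_R$: it is the top value $1$, attained exactly on the tuples of $R$, rather than $0$, together with the observation that the empty-relation case falls outside the scope of the cancellative theorem and must be treated by hand. (Alternatively, one could avoid citing the theorem altogether and directly mimic its padding argument, appending a constant column $a \in R$ to any $R$-array, but routing through the theorem is cleaner.)
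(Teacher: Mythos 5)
Your proposal is correct and follows essentially the same route as the paper: both identify that the identity $1$ of $(\B,\wedge)$ is cancellative and is attained by $w_R$ whenever $R$ is nontrivial, and then invoke the cancellative borrow-closure theorem. You add two careful touches the paper leaves implicit --- the reduction of an intersection of borrow closed permutation clones to the single-relation case, and the separate treatment of $R=\emptyset$ --- but the core argument is identical.
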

\begin{proof}
Let $w_R$ be the weight function from a relation $R$. 
If $R$ is non trivial, then $w_R(a)=1$ for at least one $a$.
$1$ is cancellative as it is an identity. 
Thus $Pol(w_R)$ is borrow closed.
\qed
\end{proof}

Relationally defined permutation clones have a stronger factoring property than 
borrow closure.

\begin{lemma}
 Let $C$ be a relationally defined permutation clone.
 If $f \oplus g \in C$, then $f,g\in C$.
\end{lemma}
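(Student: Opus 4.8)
The plan is to use the fact that, for a relationally defined permutation clone, membership is a purely component-wise condition, together with the observation that respecting a relation is insensitive to dummy (fictitious) variables. Note first that one cannot simply invoke borrow closure (Lemma~\ref{lemma_borrowclosed}), since that statement would require $f\in C$ already; here \emph{both} factors must be recovered directly from their components.

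Write $C=\Pol(w_R\mid R\in\RR)$ for a family of relations $\RR$; by Definition~\ref{exampleRelationWeight}, $h\in C$ holds precisely when $h$ is a bijection and every component $h_i$ respects every $R\in\RR$ in the clone sense. I would first record that $f$ and $g$ are bijections: since $f\oplus g$ acts as $(x,y)\mapsto(f(x),g(y))$ on $A^n\times A^m$, bijectivity of $f\oplus g$ forces both $f$ and $g$ to be bijections. This is the only point at which the ambient permutation structure is used.

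Now fix $R\in\RR$, where we may assume $R\ne\emptyset$ (an empty relation yields $\Pol(w_R)=B(A)$ and imposes no constraint). By the definition of $\oplus$, for $1\le i\le n$ the $i$-th component of $f\oplus g$ is $f_i$ viewed as a map of all $n+m$ variables but ignoring the last $m$, and for $1\le j\le m$ the $(n+j)$-th component is $g_j$ ignoring the first $n$ variables; membership $f\oplus g\in C$ says that each of these padded components respects $R$. The key step is that respecting $R$ is invariant under inserting and deleting dummy variables. Deleting is the direction I need: given a $k\times n$ matrix $N$ whose columns lie in $R$, pad it with further columns chosen from $R$ (possible since $R\ne\emptyset$) to a $k\times(n+m)$ matrix $M$ with all columns in $R$; applying the padded component to the rows of $M$ ignores the appended coordinates, so the result equals $f_i$ applied to the rows of $N$ and it lies in $R$ because the padded component respects $R$. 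Hence $f_i$, as an $n$-ary map, respects $R$, and symmetrically each $g_j$ respects $R$.

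Since this holds for every $R\in\RR$, every component of $f$ and of $g$ respects every relation in $\RR$, so $f,g\in C$. I expect the only real obstacle to be the index bookkeeping in the padding argument — matching the relational arity $k$ against the functional arities $n$, $m$ and $n+m$ — rather than any conceptual difficulty; conceptually the statement is just the observation that $C$-membership is detected component-by-component and that dummy arguments are invisible to relations, which is exactly what fails for a general (weight-defined) borrow closed clone.
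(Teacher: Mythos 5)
Your proof is correct and takes essentially the same route as the paper's: the paper concatenates an array $a$ for $f$ with an array $b$ for $g$, both with all columns in the defining relation, and reads off that the first $n$ columns of $(f\oplus g)(a\oplus b)$ are exactly the columns of $f(a)$ --- which is your padding-with-columns-of-$R$ argument phrased array-wise rather than component-by-component. Your explicit treatment of the case $R=\emptyset$ (and of bijectivity of the factors) is a minor detail the paper's proof leaves implicit.
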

\begin{proof}
Let $C =\PPol(\rho)$ be our relationally defined permutation clone.

Let $f$ have arity $n$ and $g$ have arity $m$.
 Let $a,b$ be two appropriately sized arrays of elements from $A$ such that every column
 is in the relation defining $C$.
 Apply $f \oplus g$ to $a\oplus b$ and the columns of $(f\oplus g)(a\oplus b)$ are all in the relation defining $C$. Thus since $a$ had columns in $\rho$ and the first $n$ columns of $(f\oplus g)(a\oplus b)$ are all in $\rho$ then $f(a)$ has columns in $\rho$ so $f\triangleright \rho$ and $f \in C$.
 Similarly $g \in C$.
 \qed
\end{proof}

We can determine the relations that give us the full permutation clone.

\begin{lemma}
\label{lemma_relationFull}
Let $A$ be a finite set.
 Let $\emptyset \neq R\subseteq A^k$ be a relation with $\PPol(R)=B(A)$.
 Then there exists an equivalence relation $E$ such that \[R = \{a\in A^k \mid a_i=a_j,\,\forall i,j: (i,j)\in E\}.\]
\end{lemma}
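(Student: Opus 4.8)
The plan is to reformulate the hypothesis via Lemma~\ref{lemma_AutR}. Since $\PPol(R)=B(A)$, for every $n$ we have $\Pol(w_R)^{[n]}=\Aut(\A^n)=Sym(A^n)$; equivalently, the relation $R^n\subseteq (A^n)^k$ is invariant under the full symmetric group $Sym(A^n)$ acting coordinatewise. A relation on a set $B$ is invariant under all of $Sym(B)$ precisely when membership of a $k$-tuple depends only on its \emph{equality type}, i.e.\ the equivalence relation on $\{1,\dots,k\}$ recording which entries coincide. Thus the entire content of the hypothesis is: for every $n$, whether a $k\times n$ array over $A$ lies in $R^n$ depends only on which of its \emph{rows} are equal as elements of $A^n$.

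Taking $n=1$ first, $R$ itself is $Sym(A)$-invariant, hence a union of equality-type classes; let $P$ be the finite, nonempty (as $R\neq\emptyset$) set of equality types occurring in $R$, and set $E=\bigcap_{\sim\in P}\sim$, an equivalence relation on $\{1,\dots,k\}$. Since every $a\in R$ has type containing $E$, we immediately obtain $R\subseteq\{a : a_i=a_j\ \forall (i,j)\in E\}$. It remains to prove the reverse inclusion, and this is exactly where arities $n\ge 2$ are indispensable: $n=1$ alone would permit $P$ to be \emph{any} up-set of types, for example one generated by several incomparable minimal types.

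For the reverse inclusion I would build one array realizing the meet $E$. Enumerate $P=\{\sim_1,\dots,\sim_r\}$ and choose, for each $t$, a column $c^{(t)}\in R$ of type exactly $\sim_t$; assembling them as columns of a $k\times r$ array $C$ gives $C\in R^r$, and two rows of $C$ coincide iff they agree in every column, i.e.\ exactly on the pairs in $\bigcap_t\sim_t=E$, so $C$ has row-type $E$. Now fix any $a\in A^k$ with $\sim_a\supseteq E$ and compare the two $k\times(r+1)$ arrays got by appending to $C$ either the column $a$ or a repeat of $c^{(1)}$. Neither append can split an $E$-class (since $E\subseteq\sim_a$ and $E\subseteq\sim_1$), so both arrays again have row-type $E$ and hence lie in one $Sym(A^{r+1})$-orbit; as the second array is in $R^{r+1}$, invariance of $R^{r+1}$ forces the first into $R^{r+1}$ too, whence its last column $a$ lies in $R$. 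This yields $\{a : a_i=a_j\ \forall (i,j)\in E\}\subseteq R$ and, with the inclusion above, the desired equality.

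The main obstacle is precisely the passage from ``$R$ is symmetric-invariant'' to ``$P$ is the principal filter generated by a single $E$'': ruling out several incomparable minimal types cannot be done from $R$ alone and must exploit the higher-arity invariance. The device that converts this into the collapse of $P$ is the construction of $C$ whose row-type is exactly the meet $\bigcap P$, followed by the append-and-compare step. One should also check the small-$|A|$ case, where some types are unrealizable over $A$; this is harmless, since unrealizable types contribute the empty class to both sides and the argument only ever produces columns that are genuine elements of $A^k$.
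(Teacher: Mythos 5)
Your proof is correct, and it takes a genuinely different route from the paper's. The paper argues by induction on the arity $k$ of $R$: either one can exhibit tuples in $R$ witnessing that each pair of adjacent coordinates can differ, in which case a suitable bijection of $A^{k-1}$ sends an array with columns in $R$ to one with arbitrary first column and $R=A^k$; or some pair of coordinates is equal throughout $R$, in which case that coordinate is projected away using the semiring closure $w^\vee_R$ and the product-with-$\delta$ trick, the induction hypothesis is applied, and the resulting equivalence relation is extended by the forced pair. Your argument instead translates the hypothesis once and for all via Lemma~\ref{lemma_AutR} into $Sym(A^n)$-invariance of $R^n$ for every $n$, observes that a $Sym(B)$-invariant $k$-ary relation is a union of equality-type classes (the partial bijection $u_i\mapsto v_i$ between tuples of the same type extends to a permutation of $B$), and then shows the set $P$ of types occurring in $R$ is the principal filter above $E=\bigcap P$ by a single concatenation at arity $|P|+1$. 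What your approach buys is a non-inductive, purely orbit-theoretic argument that handles several incomparable minimal types in one stroke and avoids the coclone manipulations; it also sidesteps a delicate point in the paper's "all coordinates can differ" branch, where the witness array is only shown to have adjacent rows distinct although the subsequent step needs all $k$ rows pairwise distinct so that a bijection can move them freely. What the paper's route buys is that it stays inside the weight/coclone formalism used throughout (the $w^\boxplus$ and homomorphism closures of Definition~\ref{defn_coclone}), which is reused in Lemma~\ref{lemma_maximalSubEl}. Your closing remarks on unrealizable equality types over small $A$ are exactly the right caveat and are handled correctly, since every type you manipulate is witnessed by an actual element of $A^k$ or an actual array over $A$.
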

\begin{proof}
Let $k=1$, so $R\subseteq A$. 
Let $f$ be the cycle that rotates all the elements of $A$.
Then $f\triangleright w_R$ means that $f(R)=R$ so $R=A$.

We proceed by induction. Assume the result is  true for $k-1$.

Suppose there exist $a_1,\dots,a_{k-1}\in R$ such that for all $i$, $(a_i)_i \neq (a_i)_{i+1}$.
Then the $(k-1) \times k$ array 
$\left[ a_1\dots a_{k-1}\right]$ has $k$ distinct rows, so it can be mapped to
some $(k-1) \times k$ array $b$ such that the first column $b_1$ of $b$ is arbitrary. 
Thus $b_1\in R$ so $R=A^k$ and we are done.

If such a set of $a_i$ do not exist, assume without loss of generality that
for all $a\in R$, $a_{k-1}=a_k$. 
Let $w_R$ be the characteristic weight of $R$, let $(\B,\vee,\wedge)$ be a semiring
and note that $w^\vee_R(a_1,\dots,a_{k-1})=1$ iff $w_R(a_1,\dots,a_{k-1},a_{k-1})=1$.
From $w^\vee_R$ we can construct the weight
$v:A^k\rightarrow (\b,\wedge)^2$ such that
$v( a_1,\dots,a_{k})=(w^\vee_R(a_1,\dots,a_{k-1}, \delta(a_{k-1},a_{k})$.
Applying the AND homomorphism from $\B^2$ to $\B$ to $v$,
we find that $\wedge \circ v(  a_1,\dots,a_{k})=w_R( a_1,\dots,a_{k})$, so
$w^\vee_R$ and $w_R$ define the same permutation clone.

Then define $R_1\subset A^{k-1}$ such that $w_{R_1}=w^\vee_R$,. 
Apply the induction
hypothesis to find an equivalence relation $E_1$ on $\{1,\dots,k-1\}$. 
Extend $E_1$
by  $(k-1,k)$ to obtain an equivalence relation $E$ and we have proven our result.
\qed
\end{proof}

 A \emph{subelementary} monoid $S=C\cup N$ is a disjoint union of a cancellative monoid $C$ and a nil semigroup $N$. 
 Note that $N$ is an ideal of $S$.
 The two element boolean algebra $(\B,\wedge)$ is the simplest subelementary monoid; 
 a union of the one element cancellative monoid $\{1\}$ and
 the one element nil semigroup $\{0\}$. 
 For any subelementary monoid $S$, the map $h:S\rightarrow \B$ with $h(c)=1$ for all $c \in C$ and
 $h(n)=0$ for all $n \in N$ is a homomorphism. 
 Thus $\B$ is the only simple subelementary monoid.
 
It is known  (e.g.\ \cite{grillet2001}) that a finitely generated commutative monoid is a 
 subdirect product of a cancellative and a direct product of subelementary monoids, i.e. $C \times \prod_i S_i$.
 We obtain the following result immediately.
 
\begin{lemma}
  A meet irreducible permutation clone is defined by a cancellative or subelementary monoid weight.
\end{lemma}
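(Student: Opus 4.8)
The plan is to peel an arbitrary defining weight apart into a cancellative piece and several subelementary pieces, using the structure theorem for finitely generated commutative monoids quoted just above, and then to let meet irreducibility select a single piece. To set up, I would use Theorem~\ref{theoremJerabek} to write the meet irreducible permutation clone $C$ as $C=\Pol(\Inv(C))$, where $\Inv(C)$ is a permutation co-clone. It suffices to understand $\Pol(w)$ for a generating weight $w:A^k\rightarrow M$ of $\Inv(C)$. Since $A$ is finite, the image $w(A^k)$ is finite, so by closure operation~3 of Definition~\ref{defn_coclone} I may replace $M$ by the submonoid it generates and thereby assume that $M$ is a \emph{finitely generated} commutative monoid.

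The main step is the decomposition. The cited structure theorem embeds $M$ as a subdirect product $M\hookrightarrow K\times\prod_i S_i$, with $K$ cancellative and each $S_i$ subelementary. Writing $p_0\colon K\times\prod_i S_i\rightarrow K$ and $p_i\colon K\times\prod_i S_i\rightarrow S_i$ for the coordinate projections, set $w_0=p_0\circ w$ and $w_i=p_i\circ w$. Each projection is a commutative monoid homomorphism, so by closure operation~2 every $w_0$ and $w_i$ again lies in $\Inv(C)$; moreover $w_0$ targets the cancellative monoid $K$ and $w_i$ targets the subelementary monoid $S_i$. Because the embedding into the product is \emph{injective}, for every array $a$ and every bijection $f$ the equality $w(a)=w(f(a))$ holds in $M$ if and only if it holds coordinatewise, i.e.\ $w_0(a)=w_0(f(a))$ and $w_i(a)=w_i(f(a))$ for all $i$. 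Hence
\[
\Pol(w)=\Pol(w_0)\cap\bigcap_i\Pol(w_i).
\]
Applying this to every generating weight of $\Inv(C)$ shows that $C=\bigcap_v\Pol(v)$, where $v$ ranges over a family of weights lying in $\Inv(C)$, each of which maps into a cancellative or a subelementary monoid.

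To finish, meet irreducibility of $C$ collapses this intersection: some single $v$ in the family already satisfies $C=\Pol(v)$, and $v$ is by construction either a cancellative or a subelementary weight, which is exactly the claim. The step I expect to be the main obstacle is this collapse, because the family may be infinite while the binary notion of meet irreducibility only handles $C=C_1\cap C_2$. I would resolve this either by reading ``meet irreducible'' as completely meet irreducible, or by first arguing that finitely many of the $\Pol(v)$ already cut out $C$ (via a finiteness/algebraicity property of the lattice of permutation clones over the finite set $A$) and then invoking ordinary meet irreducibility. The second delicate point, which is what makes the displayed identity an equality rather than a mere inclusion, is ensuring that the structure theorem is used in its injective (subdirect embedding) form, so that agreement of products in $M$ is equivalent to agreement in every factor.
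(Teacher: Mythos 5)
Your argument is exactly the one the paper intends: it states the Grillet subdirect decomposition $C\times\prod_i S_i$ of a finitely generated commutative monoid immediately before the lemma and then asserts the result ``immediately,'' giving no further proof, and your write-up is precisely the fleshing-out of that derivation (restrict to the submonoid generated by the finite image, project onto the factors via closure operations 2 and 3, use injectivity of the subdirect embedding to get $\Pol(w)=\Pol(w_0)\cap\bigcap_i\Pol(w_i)$, and let meet irreducibility pick a factor). The caveat you raise about complete versus binary meet irreducibility is real but is exactly the reading the paper tacitly assumes, so your proposal is correct and matches the paper's approach.
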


For instance the conservative permutation clones are of this form, as
are the orthogonal permutation clones over the field  $(\Z_p,+,\cdot)$ by the weight
$w_O:\Z_p^2\rightarrow (\Z_p,+)$ with $w_O(x,y)=x\cdot y$.
 
 The following shows that for borrow closure (and thus for ancilla closure)
 we can annihilate the nil part of a subelementary monoid and still define the same
 permutation clone.
 
\begin{lemma}
\label{lemma_subelementaryNil}
 Let $w:A^k\rightarrow C \cup N$ be a subelementary weight. Let $\Pol(w)$ be borrow closed.
 Let $\phi: C \cup N \rightarrow C \cup \{0\}$ be the natural homomorphism annihilating the nil
 part. 
 Then $\Pol(w)=\Pol(\phi \circ w)$.
\end{lemma}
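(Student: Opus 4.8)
The plan is to prove the two inclusions $\Pol(w) \subseteq \Pol(\phi \circ w)$ and $\Pol(\phi \circ w) \subseteq \Pol(w)$ separately; only the second uses borrow closure, and it is where the real work lies. The first inclusion I would settle purely on the level of the defining co-clones: since $\phi$ is a commutative monoid homomorphism, closure operation~(2) of Definition~\ref{defn_coclone} places $\phi \circ w$ inside $\langle w \rangle$, and because $\Pol$ is antitone this gives $\Pol(w) = \Pol(\langle w \rangle) \subseteq \Pol(\phi \circ w)$. No hypothesis beyond $\phi$ being a homomorphism enters here.

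For the reverse inclusion I would take $f \in \Pol(\phi \circ w)$ of arity $n$ and aim to show $w(a) = w(f(a))$ for every array $a \in A^{k \times n}$. Since $\phi$ is a homomorphism, membership of $f$ in $\Pol(\phi\circ w)$ reads $\phi(w(a)) = \phi(w(f(a)))$ for all $a$. Let $R = \{x \in A^k : w(x) \in C\}$ be the set of columns carrying cancellative weight. As $\phi$ restricts to the identity on $C$ and sends all of $N$ to $0$, it is injective on $C$ and collapses $N$. Hence for arrays $a$ all of whose columns lie in $R$ we obtain $\phi(w(a)) = w(a) \in C$, a nonzero element; the equation $\phi(w(a)) = \phi(w(f(a)))$ then forces every column of $f(a)$ into $R$ as well, and yields $w(a) = w(f(a))$ at once. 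So $f$ already respects $w$ on every array whose columns avoid the nil part.

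The remaining case is arrays $a$ having at least one column $c$ with $w(c) \in N$, and this is where I expect the main obstacle to sit. For such $a$ we know only that $w(a), w(f(a)) \in N$ (as $N$ is an ideal) and that both vanish under $\phi$, so the defining condition of $\Pol(\phi\circ w)$ degenerates to $0=0$ and carries no information about the precise nil values; all content must be extracted from the hypothesis that $\Pol(w)$ is borrow closed. My plan is to isolate and cancel the cancellative part: writing $w(a) = \gamma \cdot \nu$ with $\gamma \in C$ the product of the $R$-columns and $\nu \in N$ the product of the nil columns, I would adjoin to $a$ an all-$x$ column with $w(x) \in C$ reproducing $\gamma$, invoke the borrow-closure criterion for cancellative array values together with Lemma~\ref{lemma_borrowclosed} to pass equalities between arities, and set up an induction on the number of nil columns reducing each nil-weighted array to one already handled.

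I expect the genuine difficulty to be exactly this reconstruction of the precise nil weight from borrow closure alone. Because $\phi \circ w$ retains nothing about $N$, one must show that borrow closure of $\Pol(w)$ alone pins $w$ down on nil columns up to equality, and it is not a priori clear that cancelling cancellative factors is enough when the nil semigroup contains several nonzero elements that $f$ could in principle permute. This is the single place where the subelementary structure, namely that $N$ is a nil ideal and that $\B = h(S)$ is the unique simple quotient, would have to be used essentially, and turning the cancellation idea into a rigorous reduction is the heart of the argument.
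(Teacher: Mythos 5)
Your first inclusion and your treatment of arrays all of whose columns carry cancellative weight are correct and coincide with what the paper needs ($\phi$ is injective on $C$, so $\phi\circ w(a)=\phi\circ w(f(a))\in C$ forces $w(a)=w(f(a))$). The genuine gap is that the entire nil case --- which you yourself identify as the heart of the lemma --- is left as a plan, and the plan as described cannot close. You propose to factor $w(a)=\gamma\cdot\nu$ and ``cancel the cancellative part,'' but the equation you would be cancelling in lives in the ideal $N$, where nothing is cancellative, so no factor can be stripped off; and padding $a$ with columns of cancellative weight keeps the total weight inside $N$, where the hypothesis $f\triangleright\phi\circ w$ still reads $0=0$ and yields nothing. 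Your proposed induction on the number of nil columns likewise has no reduction step: replacing a nil column by a cancellative one changes the array, and there is no mechanism relating $w$ of the original array to $w$ of the modified one.

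The paper's device is the opposite of yours: rather than trying to \emph{recover} the unknown nil value, it \emph{annihilates} it. Given a contradicting array $a$ (so $w(a)\neq w(f(a))$, and by your own first step both lie in $N$), choose any $b\in A^k$ with $w(b)\in N$ and let $h$ be the nil degree of $N$, so $n^h=0$ for all $n\in N$. Appending $h$ copies of $b$ to $a$ gives $\bar a$ with $w(\bar a)=w(a)\cdot w(b)^h=0$ and $w((f\oplus i_h)(\bar a))=w(f(a))\cdot w(b)^h=0$; one concludes $f\oplus i_h\in\Pol(w)$ and then borrow closure strips the padding to give $f\in\Pol(w)$. So the single idea you are missing is to exploit nilpotency of $N$ to force both sides of the weight equation to the absorbing zero. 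Be aware, though, that your stated worry is not idle: to conclude $f\oplus i_h\triangleright w$ one must check \emph{all} arrays in $A^{k\times(m+h)}$, and for an array whose appended $h$ columns have cancellative weight the original difficulty (two possibly distinct nonzero elements of $N$) reappears; when $N$ has more than one nonzero element this point genuinely needs an argument, so your instinct about where the difficulty sits is sound and should be pressed rather than abandoned.
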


\begin{proof}
 Suppose this is not true, so there is some $f\in B_m(A)$ with $f \triangleright \phi \circ w$ and 
 $f \not\triangleright  w$.
 Let $a\in A^{k \times m}$ be the contradicting array, so $\phi \circ w(f(a))=\phi \circ w(a)$
 but $w(f(a))\neq w(a)$.
 Then $w(f(a)),\, w(a)\in N$.
 Let $b\in A^k$ such that $w(b)\in N$, let $h\in \N$ be the nil degree of $N$, so $\forall n \in N$, $n^h=0$.
 Let $\bar a \in A^{k \times (m+h)}$ be the array $a$ with $h$ copies of $b$ adjoined to the right.
 
 Then $w(\bar a) = w(a)\cdot w(b)^h=0$ and $w((f\oplus i_h)(\bar a)) = w(f(a) \cdot w(b)^h=0$
 so $(f\oplus i_h)\triangleright w$, so $(f\oplus i_h) \in \Pol(w)$.
 But $\Pol(w)$ is borrow closed, so this implies that $f\in \Pol(w)$ and we are done.
\end{proof}

The following shows that maximal borrow closed permutation clones
are defined either by cancellative weights or relational weights.

\begin{lemma}
\label{lemma_maximalSubEl}
Let $w:A^k\rightarrow (C \cup N,\cdot)$ be a nontrivially subelementary weight,
so $w(A^k)\cap N \neq \emptyset$ and $w(A^k)\cap C \neq \emptyset$. 
Let $\Pol(w)$ be borrow closed and maximal.
 Let $\phi: C \cup N \rightarrow  \{0,1\}$ be the natural homomorphism annihilating the nil
 part and mapping the cancellative part to $1$. 
 Then $\Pol(w)=\Pol(\phi \circ w)$.
\end{lemma}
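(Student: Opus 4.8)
The plan is to get the inclusion $\Pol(w)\subseteq\Pol(\phi\circ w)$ for free from coclone closure and then to pin down $\Pol(\phi\circ w)$ using maximality. Since $\phi$ is a monoid homomorphism, closure operation~(2) of Definition~\ref{defn_coclone} places $\phi\circ w$ in the permutation coclone generated by $w$, so every $f\triangleright w$ also satisfies $f\triangleright\phi\circ w$; hence $\Pol(w)\subseteq\Pol(\phi\circ w)$. Writing $R=\{a\in A^k\mid w(a)\in C\}=w^{-1}(C)$, we have $\phi\circ w=w_R$, so $\Pol(\phi\circ w)=\PPol(R)$ is relationally defined and therefore borrow closed. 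The nontrivial subelementarity hypothesis gives $R\neq\emptyset$ (because $w(A^k)\cap C\neq\emptyset$) and $R\neq A^k$ (because $w(A^k)\cap N\neq\emptyset$).

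Next I would feed this into maximality. We have $\Pol(w)\subseteq\Pol(\phi\circ w)\subseteq B(A)$ with $\Pol(\phi\circ w)$ a borrow closed permutation clone. Since $\Pol(w)$ is maximal among borrow closed permutation clones, the only borrow closed clone strictly above it is $B(A)$; thus either $\Pol(\phi\circ w)=\Pol(w)$, which is precisely the desired conclusion, or $\Pol(\phi\circ w)=B(A)$. The whole content of the lemma therefore reduces to excluding the second alternative, that is, to showing $\PPol(R)\neq B(A)$.

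By Lemma~\ref{lemma_relationFull}, $\PPol(R)=B(A)$ would force $R$ to be an equality-type relation cut out by an equivalence relation $E$ on the coordinates, and since $R\neq A^k$ this $E$ must identify some pair of coordinates. The plan for the obstruction is to rule this out using both the nil and the cancellative parts of $w$ together with maximality: invoking borrow closure and Lemma~\ref{lemma_subelementaryNil} we may annihilate the nil part, and collapsing the coordinates identified by $E$ produces a purely cancellative weight $\tilde w$ of smaller arity with $\Pol(w)=\Pol(\tilde w)$. The remaining and genuinely delicate step is to show that such a $\tilde w$ cannot have a maximal clone: one expects to interpose a borrow closed permutation clone strictly between $\Pol(\tilde w)$ and $B(A)$, namely the gates preserving the $\tilde w$-weight only up to a symmetry of the value monoid, exactly as the partition-stabilising clone lies strictly between the parity-preserving clone and $B(A)$ over $\Z_2$, thereby contradicting maximality. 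This is the step I expect to be the main obstacle, since it requires controlling the symmetries of the cancellative reduct and verifying that the enlarged class is again a borrow closed permutation clone that is still proper.
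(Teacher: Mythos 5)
Your setup matches the paper's up to the decisive step: the inclusion $\Pol(w)\subseteq\Pol(\phi\circ w)$ via homomorphism closure, the identification $\Pol(\phi\circ w)=\PPol(R)$ for $R=w^{-1}(C)$ with $\emptyset\neq R\neq A^k$, the reduction by maximality to excluding $\PPol(R)=B(A)$, and the appeal to Lemma~\ref{lemma_relationFull} to force $R$ to be cut out by an equivalence relation $E$ identifying at least one pair of coordinates. All of that is correct and is exactly how the paper frames the problem.

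The gap is in the step you flag as the obstacle, and the route you sketch for it would not work. First, collapsing the $E$-identified coordinates does not in general produce a \emph{purely cancellative} weight: if $E$ identifies further pairs among the surviving coordinates, the collapsed weight still takes values in the nil part, so it remains nontrivially subelementary. Second, and more seriously, your proposed contradiction --- that the reduced cancellative weight cannot define a maximal borrow closed permutation clone because one can always interpose the clone of gates preserving the weight only up to a symmetry of the value monoid --- rests on a claim that cannot hold in general: the very dichotomy this lemma is used to prove asserts that maximal borrow closed permutation clones may be defined by cancellative weights, so no uniform argument of the form ``cancellative implies non-maximal'' is available, and you give no construction of the interposed clone nor a proof that it is proper and borrow closed. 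The paper instead closes the argument by induction on the arity $k$: it equips $C\cup N$ with the semiring addition $c+0=0+c=c$ and $c+d=0$ for $c,d\in C$, so that the semiring closure yields $w^+(a_1,\dots,a_{k-1})=w(a_1,\dots,a_{k-1},a_{k-1})$ and, via the product $w^+\times\delta$ and a suitable homomorphism, $\Pol(w)=\Pol(w^+)$; the induction hypothesis applied to $w^+$ gives $\Pol(w^+)=\Pol(\phi\circ w^+)$, and since $R$ and $R^+$ generate the same relational clone one concludes $\Pol(w)=\Pol(\phi\circ w)$ after all, so the branch $\PPol(R)=B(A)$ never actually occurs. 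You would need to supply this arity-reduction mechanism (or an equivalent) to complete the proof; as written the argument is incomplete at its crucial point.
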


\begin{proof}
We note that by Lemma \ref{lemma_subelementaryNil} we can take $N=\{0\}$ to be trivial.
Let $1\in C$ be the identity.

We proceed by induction.
Let $k=1$. Let $R\subseteq A$ such that
$w_R=\phi \circ w$.
 By the nontriviality of the subelementary weight, $R$ is nontrivial,
 and $\PPol(R)$ is a maximal borrow closed permutation clone,
 so $\Pol(w)=\Pol(\phi \circ w)$.

 Suppose this is not true for some $k>1$, but true for $k-1$.
 Then $\Pol(w)\neq\Pol(\phi \circ w)$ so $\Pol(\phi \circ w)=B(A)$ by maximality.
 
 From Lemma \ref{lemma_relationFull} above we know that
 the relation $R$ such that $w_R=\phi \circ w$ is defined
 by an equivalence relation $E$.
 
 If $E$ is the equality relation, then $R=A^k$ and $w(A^k)\subseteq C$, a contradiction.
 
 Without loss of generality, let $(k-1,k) \in E$.
 Thus $w(a_1,\dots,a_k)\in C$ implies that $a_{k-1}=a_k$.
 We define  a binary operation $+$  on $C\cup N$ such that $(C\cup N,+,\cdot)$ is a semiring.
 For all $c\in C$, $c+0=0+c=c$, for $c,d\in C$ let $c+d=0$.
 Then because $w^+$ has only one nonzero summand, 
 $w^+(a_1,\dots,a_{k-1})=w(a_1,\dots,a_{k-1},a_{k-1})$.
 
 Now let $w^+ \times \delta: A^k \rightarrow (C \cup N) \times \{0,1\}$ be defined by
 \begin{eqnarray*}
w^+ \times \delta (a_1,\dots,a_k) = (w^+(a_1,\dots,a_{k-1}), \delta(a_{k-1},a_k)).
\end{eqnarray*}
 Then $\psi:(C \cup N) \times \{0,1\} \rightarrow (C \cup N) $ defined by $\psi(c,1)=c$
 and $\psi(c,0)=0$ is a monoid homomorphism.
 Then $\psi \circ (w^+ \times \delta)=w$ so $w$ and $w^+$ define the same 
 permutation clone.

By the induction hypothesis, $\Pol(w^+)=\Pol(\phi \circ w^+)$.
We now define the relation  $R^+\subseteq A^{k-1}$
such that  $w_{R^+}= \phi \circ w^+$.
Then 
\begin{eqnarray*}
R= \{(a_1,\dots,a_k) \mid (a_1,\dots,a_{k-1}) \in R^+ \wedge a_{k-1}=a_k\},
\end{eqnarray*}
so
$R$ is in the relational clone defined by $R^+$. Similarly
\begin{eqnarray*}
R^+= \{(a_1,\dots,a_{k-1}) \mid (a_1,\dots,a_{k-1},a_{k-1}) \in R\}
\end{eqnarray*}
so $R$ and $R^+$ define the same relational clone.
Thus $\Pol(w)=\Pol(w^+)=\Pol(\phi \circ w^+)=\Pol(\phi \circ w)$
and we are done.  
 \qed
\end{proof}

Thus maximal borrow closed permutation clones are defined by a cancellative weight or a relational weight.
By Lemma \ref{lemma_relCloneClosure}
 we know that all maximal relationally defined permutation clones must be defined by relations corresponding to maximal clones, so by Rosenberg's result we know them all. We will investigate these in the next section.

The following example shows that  Lemma \ref{lemma_maximalSubEl}
 does not apply in general.

\begin{example}
\label{example_three}
 Let $M=\N_0\cup \{\infty\}$ be the subelementary monoid natural numbers with absorbing element $\infty$ adjoined.
 Let $A=\{0,1,2\}$ and $w:A\rightarrow M$ have $w(0)=0$, $w(1)=1$ and $w(2)=\infty$.
 Then $\Pol(w)$ will act as $\Cons(\{0,1\})$ on the set $\{0,1\}$.
 
 Let $\phi: \N_0\cup \{\infty\} \rightarrow  \{0,1\}$ be the natural homomorphism  mapping the cancellative part $\N_0$ to $1$ and $\infty$ to $0$. 
 Then  $\Pol(\phi \circ w)$ is $\PPol(\{0,1\})$, that is, the collection of permutations that respect the subset $\{0,1\}$.
 We find that $\Pol(w) \subsetneq \Pol(\phi \circ w) \subsetneq B(A)$.
\end{example}

\section{Learning from clones}

 As clone theory is well developed, there is a wealth of results that can be utilised for 
investigating the relationally defined permutation clones. We will see some of this here but there is  much more.

As a guiding principle we will use Rosenberg's classification of maximal clones, as all relational clones lie below one or more maximal clones, and thus all relationally defined permutation clones lie below one or more of the permutation clines defined by the Rosenberg relations.

First we will look at some tools, then we will look at the Rosenberg relations in more detail.

The property of being defined by a relation is not always clear. There are 
permutation clones that can be defined by relations as well as by cancellative weights,
such as the degenerate or essentially unary maps.

\begin{definition}
 Let $m,n\in \N$. 
 The Hamming graph $H(m,n)$ has node set $\{1,\dots,m\}^n$ with two nodes adjacent if they differ at exactly one
 position.
 Equivalently it is the direct product of $n$ copies of the complete graph $K_m$.
\end{definition}

\begin{theorem}[\cite{mifzal_zaiee_2021,praegerSchneider}]
\label{theoremHamming}
 The automorphism group $\Aut(H(n,m))$ of the Hamming graph is $S_m wr S_n$.
\end{theorem}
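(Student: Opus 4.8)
The plan is to prove both inclusions of $\Aut(H)=S_m wr S_n$, where $H$ is the Hamming graph on $\{1,\dots,m\}^n$ (two tuples adjacent iff they differ in exactly one position); I assume $m\geq 2$, the case $m=1$ being trivial. For the inclusion $S_m wr S_n\leq \Aut(H)$ I would observe that the base group $(S_m)^n$ permutes the symbols within each coordinate independently, while the top group $S_n$ permutes the $n$ coordinates; both operations preserve the Hamming distance between two tuples, hence in particular preserve the relation ``differ in exactly one position'', so every element of $S_m wr S_n$ is an automorphism, and faithfulness of the action is routine. The substance is the reverse inclusion: every $g\in\Aut(H)$ lies in $S_m wr S_n$. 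Throughout I would use that the graph distance in $H$ equals the Hamming distance, so $g$ preserves Hamming distance.

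The key structural lemma is that the maximal cliques of $H$ are exactly the \emph{lines}: for a coordinate $i$ and a fixed choice of values in the remaining coordinates, the set of $m$ tuples agreeing everywhere but in position $i$. To prove this I would take any clique and show that all its pairwise differences occur in one coordinate: if $a,b$ differ in coordinate $i$ and $a,c$ differ in coordinate $j\neq i$, then $b,c$ differ in both $i$ and $j$, contradicting adjacency. Hence each clique lies in a line, and each line is a clique of size $m$. Since $g$ maps maximal cliques to maximal cliques, it permutes the set of lines, and the maximal cliques through a fixed vertex are precisely the $n$ lines through it, one per direction. Now I would reduce $g$ to the identity by composing with elements of $S_m wr S_n$. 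Since the base group $(S_m)^n$ already acts transitively on the vertex set, I may first compose $g$ with an element of it so that $g$ fixes the base vertex $e=(1,\dots,1)$. Since $g$ then permutes the $n$ lines through $e$, I compose with a coordinate permutation from $S_n$ (which fixes $e$) so that each direction at $e$ is preserved. Finally, on each line through $e$ (a copy of $K_m$ on which $g$ acts as an element of $S_m$ fixing the symbol $1$) I compose with the corresponding element of the base group, obtaining an automorphism that fixes $e$ and every neighbour of $e$.

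It then remains to show that an automorphism fixing $e$ and all its neighbours is the identity, which I would prove by induction on the distance $d$ from $e$. Suppose $g$ fixes every vertex at distance $\leq d$ with $d\geq 1$, and let $v$ lie at distance $d+1$, differing from $e$ in coordinates $i_1,\dots,i_{d+1}$. Its $d+1$ neighbours obtained by resetting one differing coordinate to $1$ all lie at distance $d$, hence are fixed by $g$; therefore $g(v)$ is a common neighbour of all of them and lies at distance $d+1$ from $e$. The crucial point is uniqueness: any two of these ``parent'' vertices differ in exactly two coordinates and so have exactly two common neighbours, one at distance $d-1$ and one, namely $v$, at distance $d+1$; hence $v$ is the only common neighbour of all parents at distance $d+1$, forcing $g(v)=v$. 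This completes the induction, so the reduced automorphism is the identity and the original $g$ lies in $S_m wr S_n$.

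I expect the main obstacle to be the clique characterisation together with the bookkeeping in the reduction step, since one must verify that the maximal cliques through a vertex correspond bijectively to coordinate directions and that the successive compositions with wreath-product elements genuinely pin down $g$ on $e$ and its neighbourhood; once that is set up, the distance induction is forced by the common-neighbour count. Alternatively, one could invoke the Sabidussi--Vizing unique prime factorisation of connected graphs under the Cartesian (box) product: $K_m$ is prime for $m\geq 2$ and $H=K_m^{\square n}$, whence the automorphism group of the product is $\Aut(K_m) wr S_n = S_m wr S_n$. This route is shorter but relies on heavier machinery than the elementary argument above.
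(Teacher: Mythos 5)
Your proposed proof is correct. Note, however, that the paper does not prove Theorem~\ref{theoremHamming} at all: it is stated as a known result and attributed to the cited references, so there is no in-paper argument to compare against. Your elementary proof is sound: the inclusion $S_m wr S_n\leq \Aut(H)$ via preservation of Hamming distance is routine; the characterisation of maximal cliques as lines is correct (two edges out of a vertex in different directions give a non-adjacent pair, so every clique lies in a single line); the three-stage reduction (transitivity of the base group, then a coordinate permutation matching the directions at $e$, then base-group elements correcting each line through $e$ without disturbing the others, since they fix the symbol $1$) legitimately reduces to an automorphism fixing $e$ and its neighbourhood; and the distance induction closes because two vertices at Hamming distance $2$ have exactly two common neighbours, of which exactly one lies at the larger distance from $e$. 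The only mild caveat is the degenerate case $m=2$, where lines are edges rather than larger cliques, but your argument still goes through since edges are then the maximal cliques and there are $n$ of them through each vertex. Your alternative route via the Sabidussi--Vizing unique prime factorisation of $K_m^{\square n}$ is also valid and is essentially what the cited sources rely on; your first argument has the advantage of being self-contained.
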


\begin{lemma}
Let $R=\{(a,b,c)\mid a=b \bigvee b=c\} \subseteq A^3$.
Let $w:A^2\rightarrow (\N_0,+)$ be defined by $w(a,a)=1$ and $w(a,b)=0$ for $a \neq b$.
Then $\Deg(A)=\Pol(w_R)=\Pol(w)$.
\end{lemma}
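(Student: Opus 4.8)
The plan is to show that all three collections have the same $n$-ary part for every $n$, and that this common part is $\Aut(H(|A|,n)) = S_A wr S_n = \Deg(A)^{[n]}$, so that Theorem~\ref{theoremHamming} carries the main weight. Throughout let $d$ denote the Hamming distance on $A^n$.

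First I would dispose of $\Pol(w)$. For $f\in B_n(A)$ and two tuples $x,y\in A^n$ viewed as the rows of a $2\times n$ array $a$, the value $w(a)$ is, by the convention that the product is the monoid sum in $(\N_0,+)$, the number of coordinates at which $x$ and $y$ agree, i.e.\ $n-d(x,y)$. Since $f(a)$ has rows $f(x),f(y)$, the condition $f\triangleright w$ says exactly that $f$ preserves $d$. Because the graph distance in $H(|A|,n)$ coincides with $d$, a bijection preserves $d$ iff it is a Hamming graph automorphism; hence $\Pol(w)^{[n]}=\Aut(H(|A|,n))$, which equals $S_A wr S_n$ by Theorem~\ref{theoremHamming}. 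Taking unions over $n$ gives $\Pol(w)=\Deg(A)$.

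Next I would treat $\Pol(w_R)$. By Lemma~\ref{lemma_AutR} we have $\Pol(w_R)^{[n]}=\Aut(A^n,R^n)$, and unwinding $R^n$ shows that a triple $(x,y,z)$ of tuples in $A^n$ lies in $R^n$ iff for every position $j$ one has $x_j=y_j$ or $y_j=z_j$, that is $y_j\in\{x_j,z_j\}$. This is precisely the Hamming betweenness relation: $(x,y,z)\in R^n$ iff $y$ lies on a geodesic from $x$ to $z$, equivalently $d(x,y)+d(y,z)=d(x,z)$. The remaining task is to identify the bijections preserving this betweenness relation with the $d$-preserving ones.

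The key step, which I expect to be the crux, is an interval-cardinality count. For fixed $x,z$ the set $I(x,z)=\{y\mid (x,y,z)\in R^n\}$ equals $\prod_j\{x_j,z_j\}$, so $|I(x,z)|=2^{d(x,z)}$: each of the $d(x,z)$ differing coordinates is a free binary choice while the agreeing coordinates are forced. A bijection $g$ preserving $R^n$ carries $I(x,z)$ bijectively onto $I(g(x),g(z))$, whence $2^{d(x,z)}=2^{d(g(x),g(z))}$ and $g$ preserves $d$; conversely any $d$-preserving bijection respects the triangle-equality form of betweenness. Therefore $\Aut(A^n,R^n)$ is exactly the group of $d$-preserving bijections, which is again $\Aut(H(|A|,n))=S_A wr S_n$. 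Combining the two computations yields $\Pol(w_R)^{[n]}=\Pol(w)^{[n]}=\Deg(A)^{[n]}$ for all $n$, which is the claim. The easy inclusion $\Deg(A)\subseteq\Pol(w_R)\cap\Pol(w)$ is subsumed here, since essentially unary maps visibly preserve both Hamming distance and betweenness.
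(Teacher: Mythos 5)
Your argument is correct, but the half dealing with $\Pol(w_R)$ takes a genuinely different route from the paper. For $\Pol(w)=\Deg(A)$ you do essentially what the paper does: translate weight preservation into preservation of Hamming structure and invoke Theorem~\ref{theoremHamming} (the paper only uses the weight-$(n-1)$ arrays, i.e.\ the edges of $H(\vert A\vert,n)$, whereas you use full distance preservation; the conclusion is the same). For $\Pol(w_R)$, however, the paper argues directly and elementarily: if $f\notin\Deg(A)$ then some component $f_1$ depends on two inputs, and one exhibits a single $3\times n$ array with all columns in $R$ whose image under $f$ has a column outside $R$; this yields $\Pol(w_R)\subseteq\Deg(A)$ with no appeal to the Hamming graph theorem. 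You instead identify $R^n$ with the geodesic betweenness relation of $H(\vert A\vert,n)$, recover distance preservation from the interval count $\vert I(x,z)\vert=2^{d(x,z)}$, and invoke Theorem~\ref{theoremHamming} a second time. This is sound; the one step worth making explicit is that "carries $I(x,z)$ bijectively onto $I(g(x),g(z))$" uses that elements of $\Pol(w_R)^{[n]}$ both preserve and reflect $R^n$, i.e.\ that they are automorphisms of $(A^n,R^n)$, which is exactly Lemma~\ref{lemma_AutR}. In summary, the paper's witness-array argument is more elementary and self-contained, while your betweenness/interval argument buys a uniform treatment of both weights through the single structural fact $\Aut(H(\vert A\vert,n))=S_A wr S_n$, together with some extra geometric insight into what $R^n$ actually is.
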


\begin{proof}
($\Deg(A)\subseteq \Pol(w_R)$):  Let $f\in \Deg(A)$, $f = \pi_\alpha \bullet (g_1,\dots,g_n)$ for some $g_i\in S_A$. 
Let $w_R(a)=1$ so each column of $a$ is in $R$. If $a_{1i}=a_{2i}$ then $f(a)_{1i^\alpha}=f(a)_{2i^\alpha}$ and similarly if $a_{2i}=a_{3i}$ then $f(a)_{2i^\alpha}=f(a)_{2i^\alpha}$ so every column of $f(a)$ is in $R$ and thus $w_R(f(a))=1$.

($\Pol(w_R) \subseteq \Deg(A) )$: Suppose $f\not\in Deg(A)^{[n]}$. Without loss of generality, let $f_1$ depend upon inputs 1 and 2, so there exists some $a_1,\dots, a_n,b_1,b_2\in A$ such that 
\begin{eqnarray}
f_1(a_1,\dots,a_n) &\neq& f_1(b_1,a_2,\dots,a_n) \\
f_1(a_1,\dots,a_n) &\neq& f_1(a_1,b_2,a_3,\dots,a_n) 
\end{eqnarray}
Then the matrix
\[
a=\left[
\begin{array}{ccccc}
 b_1 & a_2  & a_3 &\dots & a_n   \\
 a_1 & a_2  & a_3 &\dots & a_n  \\
 a_1 & b_2  & a_3 & \dots & a_n
\end{array}
\right]
\]
has every column in $R$, but the first column of $f(a)$ is not in $R$, so $f \not\in \Pol(w_R)$.

%($\Deg(A)\subseteq \Pol(w)$):  Let $f\in \Deg(A)$. 
%Let $a$ be a 2 by n matrix, $w(a)$ counts how often $a_{1i}=a_{2i}$.
%Because $a_{1i}=a_{2i}$ iff $f(a)_{1i}=f(a)_{2i}$, $w(f(a))$ will have the same value, so $f\in \Pol(w)$.

($\Pol(w) = \Deg(A) )$): Fix $n\in \N$. Define a graph on $A^n$ with edges $(a,b)$ iff the matrix with rows $a$ and $b$ has weight $n-1$. This occurs iff $a$ and $b$ differ in exactly one position. Thus this is the $n$-Hamming graph on the alphabet $A$. Any $f\in \Pol(w)$ must map matrices of weight $n-1$ to each other, so it preserves the edges of this graph.
From Theorem\ref{theoremHamming} we know that the automorphism group of the Hamming graph is the wreath product $S_A wr S_n$ which is precisely $Deg(A)$ so we are done.
\qed
\end{proof}

Thus while $Deg(A)$ can be defined by a cancellative weight, it can also be defined by a relational weight.

We cannot yet define the necessary conditions on a relation $R$ such that $\Pol(w_R)=\Pi$, but the understanding is that imbalances in the relation completion possibilities will force this,  such as that used in the previous lemma.
The first consideration is that all component maps $f_i$ for some $f\in C^{[n]}$ for some permutation clone are balanced \cite{toffoli80}. 
A map $g:A^n\rightarrow A$ is \emph{balanced} if the preimage $\vert g^{-1}(a)\vert = \vert A\vert^{n-1}$ for all $a\in A$.
It is easy to see that the component maps of wire permutations are balanced.

Let $C$ be a clone, let $\beta(C)$ be the collection of bijections that can be 
created from the maps in $C$.
Then $\beta$ induces an equivalence relation of clones.
Two clones will be $\beta$-equivalent if they
contain the same balanced maps.
We will see below that some balanced maps do not occur as a component map of a bijection.

There are some manipulations that determine permutation clones that include the permutation clone of interest, including the homomorphism closure and semiring closure from Definition \ref {defn_coclone} above.

The following is clear.
\begin{lemma}
Let $\rho_1,\rho_2 \subseteq A^k$ be two relations. 
Then $\PPol(\rho_1,\rho_2) \subseteq \PPol(\rho_1 \cap \rho_2)$. 
\end{lemma}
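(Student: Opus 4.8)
The plan is to reduce the statement to the elementary observation that the characteristic weight of an intersection is the pointwise conjunction of the two individual characteristic weights. Concretely, for a single column $c\in A^k$ the definition of $w_R$ in Definition~\ref{exampleRelationWeight} gives $w_{\rho_1\cap\rho_2}(c)=w_{\rho_1}(c)\wedge w_{\rho_2}(c)$, since $c\in\rho_1\cap\rho_2$ iff $c\in\rho_1$ and $c\in\rho_2$. Extending over the columns of an array $a\in A^{k\times n}$, and using that the monoid operation on $(\B,\wedge)$ is the idempotent, commutative, associative operation $\wedge$, I would verify $\prod_i (p_i\wedge q_i)=(\prod_i p_i)\wedge(\prod_i q_i)$, so that $w_{\rho_1\cap\rho_2}(a)=w_{\rho_1}(a)\wedge w_{\rho_2}(a)$ for every array $a$.

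With this identity in hand, let $f\in\PPol(\rho_1,\rho_2)=\Pol(w_{\rho_1})\cap\Pol(w_{\rho_2})$. Then for every array $a$ we have $w_{\rho_1}(f(a))=w_{\rho_1}(a)$ and $w_{\rho_2}(f(a))=w_{\rho_2}(a)$ by the definition of $f\triangleright w_{\rho_i}$. Applying the pointwise identity to both $a$ and $f(a)$ gives $w_{\rho_1\cap\rho_2}(f(a))=w_{\rho_1}(f(a))\wedge w_{\rho_2}(f(a))=w_{\rho_1}(a)\wedge w_{\rho_2}(a)=w_{\rho_1\cap\rho_2}(a)$, so $f\triangleright w_{\rho_1\cap\rho_2}$, i.e.\ $f\in\PPol(\rho_1\cap\rho_2)$. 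This is precisely the desired inclusion.

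An alternative, essentially equivalent route is to invoke the coclone machinery already exploited in the proof of Lemma~\ref{lemma_relCloneClosure}: the weight $w_{\rho_1\cap\rho_2}$ lies in the permutation coclone generated by $w_{\rho_1}$ and $w_{\rho_2}$, being obtained as the product $(w_{\rho_1},w_{\rho_2}):A^k\rightarrow\B\times\B$ followed by the homomorphism $\wedge:\B\times\B\rightarrow\B$. Since every member of $\PPol(\rho_1,\rho_2)$ respects all weights in this generated coclone, it respects $w_{\rho_1\cap\rho_2}$, giving the same conclusion.

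There is no substantive obstacle here, which is why the statement is flagged as clear. The only point requiring a moment of care is the interaction between the column-wise conjunction and the array product $w_R(a)=\prod_i w_R(\cdot)$: one must note that because the monoid operation on $\B$ is itself $\wedge$ (hence idempotent), conjunction commutes with the product over columns, and that the condition $w_R(a)=w_R(f(a))$ is demanded for \emph{all} arrays $a$, not merely those with every column in $R$. Both checks are immediate.
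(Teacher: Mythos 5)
Your proof is correct; the paper itself offers no argument for this lemma (it is introduced with ``The following is clear''), and your direct verification via $w_{\rho_1\cap\rho_2}(a)=w_{\rho_1}(a)\wedge w_{\rho_2}(a)$ is exactly the intended reasoning. Your alternative route through the coclone closure --- forming the product weight $(w_{\rho_1},w_{\rho_2})$ and applying the homomorphism $\wedge:\B\times\B\rightarrow\B$ --- is precisely the intersection step already carried out in the paper's proof of Lemma~\ref{lemma_relCloneClosure}, so both of your arguments align with the paper's framework.
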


Let $R \subseteq A^k$ be a relation.
For any $i\in\{1,\dots,k\}$ let $c_{R,i}:A^{k-1}\rightarrow (\N,\cdot)$ be defined by 
\begin{equation}
c_{R,i}(a_1,\dots,a_{k-1}) = \vert \{x \mid (a_1,\dots,a_{i-1},x,a_i,\dots,a_{k-1})\in R\} \vert
\end{equation}

\begin{lemma}
\label{lemma_countingCons}
 For any relation $R \subseteq A^k$, $k\geq 2$, for any $i\in\{1,\dots,k\}$, $Pol(w_R) \subseteq Pol(c_{R,i})$.
\end{lemma}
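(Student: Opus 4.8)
The plan is to show that the counting weight $c_{R,i}$ lies in the permutation co-clone generated by $w_R$. Once this is established, antitonicity of $\Pol$ together with the convention $\Pol(w)=\Pol(\langle w\rangle)$ gives $\Pol(w_R)=\Pol(\langle w_R\rangle)\subseteq\Pol(\langle c_{R,i}\rangle)=\Pol(c_{R,i})$ at once, since $c_{R,i}\in\langle w_R\rangle$ forces $\langle c_{R,i}\rangle\subseteq\langle w_R\rangle$. So the whole content of the lemma is to realise $c_{R,i}$ by a short sequence of the closure operations of Definition~\ref{defn_coclone} applied to $w_R$.

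First I would reduce to the case $i=k$. Using the entry-permutation closure (operation~1) with the substitution $\rho$ that reads the last input argument into slot $i$ and places the remaining arguments into slots $1,\dots,i-1,i+1,\dots,k$ in order, I obtain a weight $w_R\circ\rho:A^k\rightarrow(\B,\wedge)$ in $\langle w_R\rangle$ with $(w_R\circ\rho)(a_1,\dots,a_{k-1},x)=w_R(a_1,\dots,a_{i-1},x,a_i,\dots,a_{k-1})$, so it suffices to sum over the last coordinate. Next I would transport the weight into the multiplicative monoid $(\N,\cdot)$: the inclusion $h$ with $h(0)=0$ and $h(1)=1$ is a monoid homomorphism $(\B,\wedge)\rightarrow(\N,\cdot)$, so the homomorphism closure (operation~2) places the $\{0,1\}$-valued indicator $h\circ(w_R\circ\rho):A^k\rightarrow(\N,\cdot)$ in the co-clone.

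Since $(\N,+,\cdot)$ is a semiring and $k\geq 2$ — which is exactly where the hypothesis is used — the semiring closure (operation~7) with $\boxplus=+$ yields
\[
(h\circ w_R\circ\rho)^{+}(a_1,\dots,a_{k-1})=\sum_{x\in A} h\bigl(w_R(a_1,\dots,a_{i-1},x,a_i,\dots,a_{k-1})\bigr),
\]
which counts precisely the $x$ completing $(a_1,\dots,a_{k-1})$ to a tuple of $R$ with $x$ in slot $i$. This is exactly $c_{R,i}$, so $c_{R,i}\in\langle w_R\rangle$ and the inclusion follows.

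There is no serious obstacle here; the only point requiring care is the bookkeeping, namely that $h$ genuinely is a monoid homomorphism and that the semiring sum of a $\{0,1\}$-indicator reproduces the cardinality appearing in the definition of $c_{R,i}$. If one prefers to avoid the closure machinery, the same conclusion can be reached directly: for $f\in\Pol(w_R)^{[n]}$ and an array $a\in A^{(k-1)\times n}$, distributing the product over the counts shows that $c_{R,i}(a)$ equals the number of row vectors $\xi\in A^n$ whose insertion as the $i$-th row of $a$ yields an array with all columns in $R$; because $f$ respects $w_R$ and is a bijection of $A^n$, the map $\xi\mapsto f(\xi)$ restricts to a bijection between the completion sets for $a$ and for $f(a)$, whence $c_{R,i}(a)=c_{R,i}(f(a))$. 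The closure-based argument is the cleaner of the two and is the route I would write up.
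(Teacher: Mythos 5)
Your proposal is correct and follows essentially the same route as the paper, which also gives both arguments: a direct counting proof and an alternative proof that realises $c_{R,i}$ inside the permutation co-clone of $w_R$ by viewing it as $\N_0$-valued and applying the semiring closure with ordinary addition. Your bookkeeping (reduction to $i=k$ via entry permutation, the monoid embedding $(\B,\wedge)\rightarrow(\N,\cdot)$, and the observation that $k\geq 2$ is needed for the semiring closure) matches the paper's reasoning.
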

\begin{proof}
 We show that for all $f\in B(A)$, $f\triangleright w_R \Rightarrow f \triangleright c_{R,i}$.
 
 Let $f\in B_n(A)$, $R \subseteq A^k$, $k\geq 2$. 
 Suppose  $f\triangleright w_R$.
 Without loss of generality, we will consider $c_{R,k}$.
 Let $a\in A^{(k-1)\times n}$. 
 For any $b\in A^n$ such that $\left[
\begin{array}{c}
 a  \\
 b
\end{array}
\right] \in R^n$, 
$\left[
\begin{array}{c}
 f(a)  \\
 f(b)
\end{array}
\right] \in R^n$.

Thus $\vert \{ b \mid \left[
\begin{array}{c}
 a  \\
 b
\end{array}
\right] \in R^n\}\vert = 
 \vert \{ b \mid \left[
\begin{array}{c}
 f(a)  \\
 b
\end{array}
\right] \in R^n\}\vert$. 
But this is simple rewriting that $c_{R,k}(a)=c_{R,k}(f(a))$ which means that $f\triangleright c_{R,k}$, which is what we wanted.
 \qed
\end{proof}

There is an alternative proof using the closure results from Theorem \ref{theoremJerabek}.
\begin{proof}
Note that $\{0,1\} \leq (\N,\cdot,1)$ as a monoid. 
Treat $w_R:A^k\rightarrow (\N_0,\cdot,1)$
as the weight map.
Then one semiring addition on $\N_0$ is the normal addition and 
\begin{equation*}
c_{R,k}(x_1,\dots,x_{k-1})= w_R^+= \sum_{a\in A} w(x_1,\dots,x_{k-1},a)
\end{equation*}
Thus $c_{R,k} $ is in the permutation coclone defined by $w_R$ so $\Pol(w_R) \leq \Pol(c_{R,k})$.
This holds for all indices using the wire permutations, so we are done.
\qed
\end{proof}

Note that $c$ maps to the natural numbers under multiplication.
This has a few implications. If $c_{R,k}(a)=0$ for some $a$, 
this absorbing zero gives an example of a properly subelementary weight.
If $c_{R,k}(a)\neq0$ for all $a$, then we obtain something more like the conservative permutation clones
defined above, but (for $k\geq 3$) on tuples rather than individual elements of $A$.
We will have a number of conservative weights based upon the prime factors of $c_{R,k}(a)$, i.e.\ the prime numbers less that $\vert A \vert$.

\begin{example}
Let $A=\{0,1,2\}$. Let $\rho=\{01,10,11\} \subset A^2$.
Then $c_{\rho,1}=c_{\rho,2}$ with $c_{\rho,1}(0)=1$, $c_{\rho,1}(2)=2$ and $c_{\rho,1}(3)=0$.
By the mapping $\infty\mapsto 0$, $n\mapsto 2^n$ we see that this is the same weight that we saw in
Example \ref{example_three}.
\end{example}

An opposite construction, from conservative weights to relational weights, can also be undertaken.
\begin{lemma}
\label{lemma_maxCons}
 Let $w:A^k\rightarrow \N_0$ be a weight. Let $m\in \N$ be maximal in $w(A^k)$.
 Let $\rho=\{a\in A^k\mid w(a)=m\}$.
 Then $\Pol(w) \subseteq \PPol(\rho)$.
\end{lemma}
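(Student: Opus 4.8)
The plan is to show directly that any $f \in \Pol(w)$ of arity $n$ respects the Boolean weight $w_\rho$, working with the additive monoid structure $(\N_0,+)$ on the codomain (the multiplicative reading $(\N_0,\cdot)$, with $m\geq 1$ since $m\in\N$, is handled by the identical argument with products replacing sums). By the observation recorded after Lemma~\ref{lemma_AutR} it suffices to control the action of $f$ on arrays all of whose columns lie in $\rho$: if every such array maps to an array with all columns in $\rho$, reversibility forces the remaining arrays to behave correctly as well.

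First I would fix an array $a \in A^{k\times n}$ with every column in $\rho$. By definition of $\rho$ each of the $n$ columns has weight $m$, so $w(a) = nm$. Since $f \triangleright w$, we get $w(f(a)) = w(a) = nm$. Now each column of $f(a)$ is an element of $A^k$, hence carries weight at most $m$ by maximality of $m$ in $w(A^k)$. The core step is then the elementary observation that a sum of $n$ nonnegative integers, each at most $m$, can equal $nm$ only when every summand equals $m$; applied to the column weights of $f(a)$ this shows that every column of $f(a)$ has weight $m$, i.e.\ lies in $\rho$.

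Finally I would promote this \emph{forward} preservation into membership in $\PPol(\rho)$. The map $a\mapsto f(a)$ applies the bijection $f$ to each of the $k$ rows independently, so it is a bijection of the finite set of $k\times n$ arrays; since it sends the finite relation $\rho^n$ of Lemma~\ref{lemma_AutR} injectively into itself, it permutes $\rho^n$, hence preserves it in both directions and is an automorphism of $\A^n=(A^n,\rho^n)$. By Lemma~\ref{lemma_AutR}, $\PPol(\rho)^{[n]}=\Aut(\A^n)$, so $f\in\PPol(\rho)$, and as $f\in\Pol(w)$ was arbitrary this gives $\Pol(w)\subseteq\PPol(\rho)$.

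I do not anticipate a serious obstacle. The only two points needing care are the monoid convention on $\N_0$ (both $+$ and $\cdot$ work) and the passage from forward preservation to genuine respect of the Boolean weight, which is precisely the reversibility remark following Lemma~\ref{lemma_AutR}. A proof routed through the coclone closure operations of Definition~\ref{defn_coclone} seems less natural here, since isolating the maximal value $m$ is not obviously a homomorphic or semiring image of $w$; I would therefore favour the direct combinatorial argument above.
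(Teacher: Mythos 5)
Your proposal is correct and follows essentially the same route as the paper's first proof: take an array with all columns in $\rho$, note the total weight is $nm$, use $f\triangleright w$ and maximality of $m$ to force every column of the image into $\rho$, and then invoke reversibility to upgrade forward preservation to membership in $\PPol(\rho)$. You simply spell out two steps the paper leaves implicit (the ``sum of $n$ terms each at most $m$ equalling $nm$'' argument and the finiteness/bijectivity promotion), which is fine; the paper also records a second proof via the coclone closure operations of Definition~\ref{defn_coclone}, which yields the extra observation that the result holds for any value $m$ that is not a product of other values in $w(A^k)$, but that route is not needed for the statement as given.
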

\begin{proof}
 Suppose $f\in \Pol(w)^{[n]}$.
 Let $a \in A^{k \times n}$ with all columns taken from $\rho$, i.e.\ $a \in \rho^n$.
 Then $w(a) = nm$ and since $f\triangleright w$, $w(f(a))=nm$. Thus each column of $f(a)$
 has weight $m$ so each column is in $\rho$, so $f\in \PPol(\rho)$.
 \qed
\end{proof}

It is clear that the same result holds for the relation defined by the
tuples that map to the minimal value in $\N_0$.

There is an alternate proof that uses the coclone closure operations.
\begin{proof}
 Let $\psi:\N_0 \rightarrow \N_0$ be the homomorphism $\psi(x)=mx$.
 
 Then the weight $w \times \psi\circ c_1:A^k \rightarrow \N_0 \times \N_0$ is in the permutation coclone.
 Let $\phi: \N_0 \times \N_0 \rightarrow \Q\cup \{\infty\}$ be the homomorphism
 $\phi(x,y)=x/y$.
 Then the weight $\phi\circ (w \times \psi\circ c_1)$ is in the permutation coclone.
 
 Let $M\subseteq \Q\cup \{\infty\}$ be the image of $\phi\circ (w \times \psi\circ c_1)$.
 It will consist of $1$ and rational numbers between $0$ and $1$.

Let $\zeta: M \rightarrow \B$ be defined by $\zeta(1)=1$ and $\zeta(x) =0$ otherwise.
Then $\zeta$ is a homomorphism and $\zeta \circ \phi\circ (w \times \psi\circ c_1)$
is the characteristic function of the relation $\rho$.
\end{proof}

This proof makes clear that this result applies for any value $m$ that is not the product of other values in $w(A^k)$, i.e.\ for $a_i \in w(A^k)$, $\prod_{i=1}^n a_i = m^n$ iff $a_i=m$ for all $i$.

\subsection {Partial Orders}

A partial order $\A=(A,\leq)$ is  \emph{exponentially decomposable} 
if $\A = \C^n$ for some partial order $\C=(C,\leq)$.

\begin{lemma}[ \cite{Jonsson_1982}]
\label{lemma_partial_order}
 Let $(A,\leq)$ be an  exponentially indecomposable partial order.
 Then $\PPol(\leq)^{[n]}= \Aut ( A^n,\leq) = S_n wr Aut(A)$.
\end{lemma}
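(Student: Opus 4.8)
The first equality is immediate from Lemma~\ref{lemma_AutR}. Writing the order as the binary relation $R=\{(a,b)\in A^2\mid a\leq b\}$, the induced relation $R^n$ on $A^n$ is exactly the componentwise order $\leq$ on $A^n$, so $\PPol(\leq)^{[n]}=\Aut(A^n,\leq)$. It therefore remains to identify this automorphism group of the product order with the wreath product $\Aut(A)\, wr\, S_n$, acting on $A^n$ by coordinatewise order-automorphisms together with coordinate permutations, exactly as in the definition of $G\, wr\, S_n$ above.

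One inclusion is a routine verification. Given $(g_1,\dots,g_n,\alpha)$ in the wreath product, its action sends a tuple $a$ to the tuple whose $i$-th entry is $a_{\alpha^{-1}(i)}^{g_i}$. Since the product order satisfies $a\leq b$ precisely when $a_i\leq b_i$ for every $i$, and each $g_i$ is an order-automorphism of $A$, comparability is preserved in each coordinate; relabelling the coordinates by $\alpha$ preserves this universally quantified condition. Hence every wreath element lies in $\Aut(A^n,\leq)$, giving $\Aut(A)\, wr\, S_n\subseteq\Aut(A^n,\leq)$.

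The substantive inclusion is the reverse one, and this is where exponential indecomposability is used. The plan is to show that every $\phi\in\Aut(A^n,\leq)$ must permute the $n$ coordinate factors of the direct decomposition $A^n=A\times\cdots\times A$. The tool is the uniqueness of direct-product factorization for partial orders, in the refinement form established by J\'onsson~\cite{Jonsson_1982}: the decomposition of a (connected) poset into directly indecomposable factors is unique up to isomorphism and reordering, and exponential indecomposability of $A$ guarantees that the $n$ copies of $A$ are precisely these indecomposable factors. An automorphism $\phi$ carries this direct decomposition to another decomposition into the same indecomposable factors, so by uniqueness it induces a permutation $\alpha\in S_n$ of the factors.

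It then remains to strip off this permutation. Composing $\phi$ with the wire permutation $\pi_\alpha^{-1}$ yields an automorphism $\psi$ that fixes each coordinate factor setwise; varying a single coordinate while freezing the others exhibits $\psi$ on each such fibre as an order-automorphism of a copy of $A$, and compatibility of these restrictions across fibres forces a single $g_i\in\Aut(A)$ in coordinate $i$, independent of the remaining coordinates. Thus $\psi=(g_1,\dots,g_n)$ and $\phi=\pi_\alpha\bullet(g_1,\dots,g_n)\in\Aut(A)\, wr\, S_n$, completing the equality. The main obstacle is plainly the factor-permutation step of the previous paragraph: it rests entirely on the refinement/uniqueness theorem for direct products of posets, which is the deep ingredient and the reason the indecomposability hypothesis cannot be dropped; once the factors are matched up correctly, recovering the coordinatewise automorphisms is only a careful but elementary verification.
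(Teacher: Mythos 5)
Your proof takes the same route as the paper: the paper's entire argument is that the first equality follows from Lemma~\ref{lemma_AutR} and the second ``from J\'onsson,'' and you have simply unpacked that citation into the standard factor-permutation argument via J\'onsson's refinement/unique-factorization theorem, followed by the routine verification that a factor-preserving automorphism splits into coordinatewise automorphisms. One caution: the step where you pass from exponential indecomposability of $(A,\leq)$ to the claim that the $n$ coordinate copies of $A$ are precisely the directly indecomposable factors of $A^n$ is exactly the content being delegated to J\'onsson and is not automatic (a poset can be exponentially indecomposable yet directly decomposable, e.g.\ the product of a $2$-chain and a $3$-chain, for which $\Aut(A^2)$ is strictly larger than $\Aut(A)\,wr\,S_2$), so that hypothesis is doing more work than your phrasing suggests --- but the paper's own one-line proof leans on the citation in precisely the same way.
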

\begin{proof}
 The first equality follows from  Lemma \ref{lemma_AutR}, the second from Jonsson.
 \qed
\end{proof}

\begin{example}
\label{ex_totalorder}
 Let $(A,\leq)$ be a total order. 
Then $\Aut(A)$ is trivial, so $\Aut ( A^n,\leq)$ is the symmetric group on $n$ elements, 
$S_n$ acting on coordinates,
which gives us $\Pi_n$, so $\PPol(\leq)=\Pi$ the trivial permutation clone. 
\end{example}

This is in stark contrast to the situation for clones, where $\Pol(\leq)$ for a bounded partial order is a maximal
clone \cite{Rosenberg1970,pinskerthesis2002}.

\subsection{Unary Relations}

The simplest relations are unary relations  $\rho \subseteq A$.

\begin{lemma}[\cite{boykett2021}]
 Let $\rho \subseteq A$.
Then $\PPol(\rho)$ is a maximal borrow closed permutation clone.
\end{lemma}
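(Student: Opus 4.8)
The plan is to prove two things: first that $\PPol(\rho)$ is a proper sub permutation clone of $B(A)$, and second that any borrow closed permutation clone strictly containing $\PPol(\rho)$ must already be all of $B(A)$. For properness, observe that $\rho \subseteq A$ is a unary relation, so by Lemma~\ref{lemma_relationFull} a unary relation gives $\PPol(\rho)=B(A)$ only when $\rho = A$ (the $k=1$ base case there uses a full rotation). Hence I assume $\emptyset \neq \rho \subsetneq A$ and must exhibit a gate outside $\PPol(\rho)$: for instance a unary permutation $\beta \in S_A$ that does not map $\rho$ to itself, which exists precisely because $\rho$ is a proper nonempty subset. By Lemma~\ref{lemma_AutR} we have $\PPol(\rho)^{[n]} = \Aut(\A^n)$ where $\A^n$ is the structure whose relation is the set of $n$-tuples with all coordinates in $\rho$; so membership means preserving, for each arity $n$, the ``box'' $\rho^n \subseteq A^n$.

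The heart of the argument is maximality. I would take an arbitrary gate $g \in B(A) \setminus \PPol(\rho)$, adjoin it to $\PPol(\rho)$, and show the resulting borrow closed permutation clone $D = \langle \PPol(\rho) \cup \{g\}\rangle$ equals $B(A)$. The key structural fact is that $\PPol(\rho)^{[n]}$ is the full stabiliser in $\Sym(A^n)$ of the subset $\rho^n$: any permutation of $A^n$ fixing $\rho^n$ setwise is an automorphism of $\A^n$. This stabiliser is the direct product (as permutation groups) $\Sym(\rho^n) \times \Sym((A\setminus\rho)^n)$ acting on the two complementary blocks, \emph{together with} whatever wire permutations permute coordinates. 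So $\PPol(\rho)$ already contains every gate that separately shuffles the ``inside'' tuples and the ``outside'' tuples; what it lacks is any gate that moves a tuple of $\rho^n$ to a tuple outside $\rho^n$.

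The plan for closing the gap is then: the chosen $g$ violates preservation at some arity, say it sends some tuple in $\rho^m$ to a tuple outside $\rho^m$ (or vice versa). Using the rich supply of permutations already in $\PPol(\rho)$ (full symmetric groups on each block, in each arity) together with composition and parallel composition, I would manufacture from $g$ a gate realising an arbitrary single transposition swapping one element of some $\rho^n$ with one element of its complement, while fixing everything else. Here borrow closure is essential: I would build the desired small permutation at a high arity as $h \oplus i_k$ and then strip off the ancillary wires to land in low arity, using Lemma~\ref{lemma_borrowclosed} to descend. Once $D$ contains such a ``cross'' transposition for arity $n$, combining it with all the within-block symmetric groups generates the full $\Sym(A^n)$, and doing this for all $n$ yields $D = B(A)$.

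The main obstacle I anticipate is the generation step: controlling the arities and ensuring that conjugating $g$ by gates of $\PPol(\rho)$ and composing genuinely produces a transposition that crosses between $\rho^n$ and its complement \emph{without} inadvertently disturbing other tuples, since $g$ may act in a complicated, high-arity, non-local way. The clean route is to exploit that the two blocks $\rho^n$ and $(A\setminus\rho)^n$ together with all coordinate-permutations give a group acting as the full symmetric group on each block, so any single cross-block move, once available, can be conjugated to any other and multiplied down to a transposition; the delicate part is extracting that first cross-block move from $g$ and using borrow closure to normalise its arity. I expect this extraction, rather than the final generation, to require the most care.
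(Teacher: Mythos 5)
The paper itself offers no proof of this lemma --- it is imported wholesale from \cite{boykett2021} --- so your attempt has to stand on its own. Your skeleton is the right one: identify $\PPol(\rho)^{[n]}$ (via Lemma~\ref{lemma_AutR}) as the setwise stabiliser of $\rho^n$ in $Sym(A^n)$, show that adjoining one ``cross-block'' move forces the full symmetric group at each arity, and use borrow closure (Lemma~\ref{lemma_borrowclosed}) to descend to the arities below that of the adjoined gate; the descent is indeed exactly where borrow closure must enter. But two things need repair. First, a concrete error: the complement of $\rho^n$ in $A^n$ is $A^n\setminus\rho^n$ (tuples with \emph{at least one} coordinate outside $\rho$), not $(A\setminus\rho)^n$ (tuples with \emph{all} coordinates outside $\rho$). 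As written, $Sym(\rho^n)\times Sym((A\setminus\rho)^n)$ does not act on all of $A^n$ and is not the stabiliser; your surrounding prose about ``inside'' and ``outside'' tuples shows you mean the correct object, but the formula must read $Sym(\rho^n)\times Sym(A^n\setminus\rho^n)$.

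Second, the step you yourself flag as the hard one --- extracting a clean cross-block transposition from an arbitrary, possibly high-arity, non-local $g$ --- is a genuine gap as it stands, and it can be avoided entirely. The intransitive subgroup $Sym(S)\times Sym(X\setminus S)$ is a \emph{maximal} subgroup of $Sym(X)$ whenever $\vert S\vert\neq\vert X\vert/2$ (classical; the same classification the paper invokes via \cite{LPS87} for equivalence relations). Here $\vert\rho^n\vert=\vert\rho\vert^n$, and $2\vert\rho\vert^n=\vert A\vert^n$ is impossible for $n\geq 2$, so $\PPol(\rho)^{[n]}$ is maximal in $B_n(A)$ for every $n\geq 2$. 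If $g\notin\PPol(\rho)$ has arity $m$, then since $g$ is a bijection failing to stabilise $\rho^m$ setwise it must send some $x\in\rho^m$ outside $\rho^m$, hence $g\oplus i_1$ sends $(x,b)\in\rho^{m+1}$ (any $b\in\rho$) outside $\rho^{m+1}$; so at the arity $\max(m,2)\geq 2$ you have a gate outside the maximal subgroup $\PPol(\rho)^{[n]}$, which immediately yields all of $B_n(A)$ with no transposition-manufacturing. Padding with identities propagates this to all higher arities, and borrow closure brings it down to all lower ones, including arity $1$ --- which is important, because at arity $1$ with $\vert A\vert=2\vert\rho\vert$ the stabiliser of $\rho$ is \emph{not} maximal in $S_A$, a degenerate case your sketch does not address. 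Finally, you are right to restrict to $\emptyset\neq\rho\subsetneq A$; the statement as printed does not exclude the trivial cases, for which $\PPol(\rho)=B(A)$ is not a proper subclone.
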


The following result is clear.
\begin{lemma}
 Let $\rho_1,\rho_2 \subseteq A$.
Then $\PPol(\rho_1,\rho_2) \leq \PPol(\rho_1 \cap\rho_2) $.
\end{lemma}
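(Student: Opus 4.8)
The plan is to show directly that any $f \in \PPol(\rho_1,\rho_2)$ respects the weight $w_{\rho_1 \cap \rho_2}$. This is simply the unary ($k=1$) special case of the intersection inclusion already recorded above for $k$-ary relations, so one could just cite that; but since the statement is flagged as clear, I would give the short self-contained argument. First I would fix an arity $n$ and take $f \in \PPol(\rho_1,\rho_2)^{[n]}$. Recall that for a unary relation $\rho$ an array with all columns in $\rho$ is just a tuple $a \in A^n$ all of whose entries lie in $\rho$, and $f \triangleright w_\rho$ means exactly that $f$ sends the set of such tuples into itself. Given a tuple $a$ with every entry in $\rho_1 \cap \rho_2$, every entry lies in $\rho_1$, so $f \triangleright w_{\rho_1}$ forces every entry of $f(a)$ into $\rho_1$; likewise $f \triangleright w_{\rho_2}$ forces every entry of $f(a)$ into $\rho_2$. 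Hence every entry of $f(a)$ lies in $\rho_1 \cap \rho_2$.

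It then remains to upgrade this forward inclusion to full respect of $w_{\rho_1 \cap \rho_2}$, i.e.\ to see that tuples with some entry outside $\rho_1 \cap \rho_2$ map to tuples of the same kind. This is the only point that requires a word: since $f$ is a bijection of the finite set $A^n$ and maps the finite set $(\rho_1\cap\rho_2)^n$ injectively into itself, it maps it onto itself, so the complement is preserved as well. This is precisely the reversibility observation recorded after Lemma~\ref{lemma_AutR} (``the other matrices all map to $0$''). Therefore $f \triangleright w_{\rho_1\cap\rho_2}$, giving $f \in \PPol(\rho_1\cap\rho_2)$ and the desired inclusion $\PPol(\rho_1,\rho_2)\leq \PPol(\rho_1\cap\rho_2)$.

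Alternatively, and without any arity-by-arity reasoning, I could run the coclone-closure argument from Lemma~\ref{lemma_relCloneClosure}: form the product weight $(w_{\rho_1},w_{\rho_2})\colon A \to \B\times\B$, which lies in the permutation coclone generated by $\{w_{\rho_1},w_{\rho_2}\}$, and apply the monoid homomorphism $\wedge\colon\B\times\B\to\B$ to obtain $w_{\rho_1\cap\rho_2}$ in that same coclone; the inclusion of polymorphism classes is then immediate by the Galois correspondence of Theorem~\ref{theoremJerabek}. There is no genuine obstacle here, the result being routine; the only care needed is the finiteness/reversibility remark that converts preservation of the relation into preservation of its characteristic weight.
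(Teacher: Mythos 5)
Your proof is correct; the paper itself offers no argument (it labels the result ``clear''), and what you supply is exactly the reasoning the paper implicitly relies on --- namely the intersection step of Lemma~\ref{lemma_relCloneClosure} (product weight followed by the $\wedge$ homomorphism), or equivalently the unary case of the earlier $k$-ary intersection lemma. Your attention to the reversibility point --- that preserving $(\rho_1\cap\rho_2)^n$ forward, together with bijectivity on a finite set, yields full respect of the characteristic weight $w_{\rho_1\cap\rho_2}$ --- is the one detail worth making explicit, and you handle it correctly.
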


Thus the collection of sub permutation clones defined by unary relations is isomorphic
to the collection of subsets of $\Pow(A)$ that are closed under intersection.
The smallest such permutation clone is $\PPol(\Pow(A))$.
Each unary relation defined permutation clone is defined by a collection of subsets of $A$ closed under intersection. These are called Moore Collections and can be counted using sequence 
A102894 of the Online Encyclopedia of Integer Sequences.

Chapter 16 of \cite{lau_2006} investigates the case of $\Pol(\{a\} \mid a \in Q)$ for $Q\subseteq A$.
In particular we know that the maximal subclones of these clones are intersections with the 
known maximal clones.

It is clear that $\Cons(A) \leq \PPol(\Pow(A))$.

\begin{lemma}
Let $A$ be of prime order $p \neq 2$.
Then  $\PPol(\Pow(A)) \cap \Aff(A) = \Pi$
\end{lemma}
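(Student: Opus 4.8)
The plan is to prove the two inclusions separately. The inclusion $\Pi \subseteq \PPol(\Pow(A)) \cap \Aff(A)$ is immediate, since $\Pi$ is contained in every permutation clone while both $\PPol(\Pow(A))$ and $\Aff(A)$ are permutation clones. Hence all the content lies in proving $\PPol(\Pow(A)) \cap \Aff(A) \subseteq \Pi$, that is, that every affine conservative bijection is a wire permutation. First I would unwind the two memberships. Identifying $A$ with the field $\Z_p$, a gate $f \in \Aff(A)^{[n]}$ has the form $f(x) = Mx + b$ with $M \in GL_n(p)$ and $b \in \Z_p^n$. Membership $f \in \PPol(\Pow(A))$ means that each component satisfies $f_i(x) \in \{x_1,\dots,x_n\}$ for every input $x$: taking the unary relation $\rho = \{x_1,\dots,x_n\}$ shows this is necessary, and it is plainly sufficient for all $\rho$. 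So the task reduces to showing that if every coordinate of $Mx+b$ always equals one of the entries of $x$, then $b=0$ and $M$ is a permutation matrix.

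I would extract this working one row $i$ at a time, from three families of test inputs (assuming $n\ge 2$; the case $n=1$ is trivial, since a conservative unary bijection fixes every singleton and so equals $i_1\in\Pi$). Feeding the constant tuple $x=(c,\dots,c)$ gives $f_i(c,\dots,c)=c\sum_j M_{ij}+b_i\in\{c\}$ for all $c\in\Z_p$; reading off $c=0$ and $c=1$ yields $b_i=0$ and $\sum_j M_{ij}=1$. Next, the basis vector $e_k$ has value set $\{0,1\}$ and $f_i(e_k)=M_{ik}$, forcing $M_{ik}\in\{0,1\}$ for every $k$. Finally, no row can contain two entries equal to $1$: if $M_{ik}=M_{il}=1$ with $k\neq l$, then for $n=2$ the row sum would be $2\neq 1$, contradicting the first step, while for $n\ge 3$ the input $e_k+e_l$ has value set $\{0,1\}$ yet $f_i(e_k+e_l)=M_{ik}+M_{il}=2\notin\{0,1\}$. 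Together with $\sum_j M_{ij}=1$, each row therefore has exactly one nonzero entry, equal to $1$. Thus $M$ is a $0$--$1$ matrix with a single $1$ per row; invertibility forbids two equal rows, so the induced map on indices is a permutation $\sigma$, $M$ is a permutation matrix, and $f(x)_i=x_{\sigma(i)}$. This is precisely a wire permutation, so $f\in\Pi$.

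The main obstacle is less the computation than ensuring the hypothesis $p\neq 2$ is genuinely used, because the statement fails for $p=2$: there $M_{ik}\in\{0,1\}$ is vacuous and the value $2$ produced by $e_k+e_l$ collapses to $0$, so rows with several $1$s are allowed and, for example, $(x_1,x_2,x_3)\mapsto(x_1+x_2+x_3,\,x_2,\,x_3)$ is an affine conservative bijection that is not a wire permutation. For $p\ge 3$ the elements $0,1,2$ are distinct in $\Z_p$, and this is exactly what pins the entries of $M$ into $\{0,1\}$ and rules out two $1$s in a single row; I would be careful to note that an a priori admissible row of $p+1$ ones summing to $1\bmod p$ is eliminated by the third step above.
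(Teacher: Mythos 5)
Your proof is correct and follows essentially the same route as the paper's: reduce to a linear map with $0$--$1$ matrix via the constant and basis-vector test inputs, use $f(1,\dots,1)=(1,\dots,1)$ to pin the row sums, and rule out two $1$s in a row with a test vector supported on two coordinates (you get the forbidden value $2$ where the paper's complementary vector gives $-1$; both exclusions use $p\neq 2$ identically). Your treatment is somewhat more careful about the edge cases $n=1,2$ and about why an invertible $0$--$1$ matrix with one $1$ per row is a permutation matrix, but the argument is the same.
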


\begin{proof}
 Let $f \in \PPol({\Pow}(A)) \cap \Aff(A)^{[n]} $.
 Then $f(0)=0$ so $f$ is linear.
 For all $i$, $f(e_i^{0,1}) \in \{0,1\}^n$ so the matrix of $f$ is a 0-1 matrix.
 $f(11\dots1)=11\dots 1$ so each row of the matrix contains $1 \mod p$ 1s.
 
 Let $x_i=1$ for all $i$ except $i=j,k$, where $x_j=x_k=0$.
 Then $f(x) \in \{0,1\}^n$. If the matrix of $f$ had a 1 in column $j$ and column $k$ for the same
 row $l$, then $f(x)_l=-1$, but $-1 \not\in \{0,1\}$ so this is a contradiction. 
 Thus the matrix has only one 1 in each row and is a 
 permutation matrix, so $f$ is a wire permutation, 
 and $\PPol(\Pow(A)) \cap \Aff(A) = \Pi$ \qed
\end{proof}

\subsection{Equivalence Relations}

In this section we discuss the case when a permutation clone preserves an equivalence relation.

An equivalence relation is a binary relation that is reflexive, transitive and symmetric, that is,
for all $x,y,z\in A$, $x \rho x$, $x \rho y \wedge y \rho z \Rightarrow x \rho z$ and 
$x \rho y \Leftrightarrow y \rho x$.
An equivalence relation is equivalent to a partition of $A$, where each part of the partition is
an equivalence class.

The following result shows a  class of maximal borrow and ancilla closed permutation clones.

\begin{theorem}
 Let $\rho$ be an equivalence relation with all equivalence classes
 of the same size.
 Then $\PPol(\rho)$ is a maximal borrow  and ancilla closed permutation clone.
\end{theorem}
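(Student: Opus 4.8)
The plan is to dispatch the two closure properties quickly and concentrate on maximality. Throughout, write $A$ as the disjoint union of $m$ classes $C_1,\dots,C_m$, each of size $s$, so that $\vert A\vert=ms$; we may assume $s\ge 2$ and $m\ge 2$, since if $s=1$ (the equality relation) or $m=1$ (the full relation) then $\PPol(\rho)=B(A)$ is not a proper clone. Ancilla closure, and hence borrow closure, is immediate from Theorem~\ref{thm_ancillaclosed} applied to $w=w_\rho$: reflexivity of $\rho$ gives $w_\rho(a,a)=1$ for every $a\in A$, and $1$ is the identity of $(\B,\wedge)$ and so cancellative.

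The structural heart of the argument is the identification of each arity level. Since $\rho^n$ is again an equivalence relation whose classes are the products $C_{j_1}\times\dots\times C_{j_n}$, there are $m^n$ of them, each of size $s^n$. By Lemma~\ref{lemma_AutR}, $\PPol(\rho)^{[n]}=\Aut(A^n,\rho^n)$ is exactly the stabiliser of this equal-block partition of $A^n$, i.e.\ the imprimitive group $S_{s^n} wr S_{m^n}\le B_n(A)$. The key external fact I would invoke is the classical classification of imprimitive maximal subgroups of the symmetric group: the stabiliser $S_a wr S_b$ of a partition of $ab$ points into $b$ blocks of equal size $a$ is a \emph{maximal} subgroup of the full symmetric group whenever $a,b\ge 2$. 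Taking $a=s^n$ and $b=m^n$ (both at least $2$ because $s,m\ge 2$) shows $\PPol(\rho)^{[n]}$ is a maximal subgroup of $B_n(A)$ for every $n$.

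Given this, maximality follows cleanly. Let $C$ be a borrow closed permutation clone with $\PPol(\rho)\subsetneq C$, and choose $g\in C^{[n]}\setminus\PPol(\rho)^{[n]}$ for some arity $n$. Then $C^{[n]}$ is a subgroup of $B_n(A)$ strictly containing the maximal subgroup $\PPol(\rho)^{[n]}$, so $C^{[n]}=B_n(A)$. Borrow closure now pushes this down to every smaller arity: for $N\le n$ and any $h\in B_N(A)$ the padded gate $h\oplus i_{n-N}$ lies in $C^{[n]}$, whence $h\in C$ by iterating Lemma~\ref{lemma_borrowclosed}; in particular $C^{[1]}=B_1(A)$. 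Finally it pushes up: fix a unary permutation $u$ that splits some class, so that there are $\rho$-related $a,b$ with $u(a),u(b)$ in different classes; then for every $N$ the gate $u\oplus i_{N-1}$ sends a $\rho^N$-related pair to a non-related pair and so lies in $C^{[N]}\setminus\PPol(\rho)^{[N]}$, forcing $C^{[N]}=B_N(A)$ by the maximality above. Hence $C=B(A)$, which is exactly maximality.

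The step I expect to require the most care is the structural identification together with its group-theoretic input: one must verify that $\Aut(A^n,\rho^n)$ really is the \emph{full} partition stabiliser $S_{s^n} wr S_{m^n}$ rather than a proper subgroup, exclude the degenerate relations $s=1$ and $m=1$, and confirm the block system on $A^n$ is nontrivial so that the imprimitive maximality theorem applies (this is precisely $2\le s^n<(sm)^n$, which is guaranteed by $s,m\ge 2$). It is also worth flagging that borrow closure is genuinely needed in the descent step: without it, adjoining a single high-arity violating gate to $\PPol(\rho)$ can leave the low-arity parts untouched, producing a non-maximal permutation clone strictly between $\PPol(\rho)$ and $B(A)$.
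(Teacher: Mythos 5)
Your proof is correct and follows essentially the same route as the paper's: identify $\PPol(\rho)^{[n]}=\Aut(A^n,\rho^n)$ as the stabiliser of the equal-block partition of $A^n$, invoke the classification of maximal subgroups of symmetric groups to get arity-wise maximality, then use borrow closure to descend to arity $1$ and a class-splitting unary permutation to ascend to all arities. The only differences are that you spell out the ancilla/borrow closedness via the cancellativity criterion and explicitly exclude the degenerate partitions, details the paper's proof leaves implicit.
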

\begin{proof}
 For any $n\in \N$, $\rho^n$ is an equivalence relation with all equivalence classes the same size.
 Thus $\PPol(\rho)^{[n]} = \Aut(\rho^n)$.
 By \cite{LPS87} we know that $\Aut(\rho^n)$ is maximal in $S_{A^n}=B_n(A)$.
 
 Let $f\not \in \PPol(\rho)$ of arity $n$.
 Then $\langle \PPol(\rho),f\rangle^{[n]}= B_n(A)$.
 By borrow (or ancilla) closure, this means that for all $m\leq n$, $\langle \PPol(\rho),f\rangle^{[m]}= B_m(A)$.
 
 Then $\langle \PPol(\rho),f\rangle^{[1]}= B_1(A)$. Thus there is some unary map $g$ that
 breaks the equivalence relation $\rho$. Then for any $n$, $g\oplus i_n$ is an $(n+1)$-ary map
 that is not in $\PPol(\rho)^{[n+1]}$, so $\langle \PPol(\rho),g\rangle^{[n+1]}= B_{n+1}(A)$.
 Thus $\langle \PPol(\rho),g\rangle= B(A)$, so $ \PPol(\rho)$ is maximal.
 \qed
\end{proof}

\begin{lemma}
 Let $\rho$ be an equivalence relation on $A$. Let $f\triangleright \rho$.
 Then $f$ is well-defined on $A/\rho$.
\end{lemma}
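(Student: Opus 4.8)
The plan is to read the conclusion as the statement that $f$, of arity $n$, descends to a well-defined map $\bar f$ on $(A/\rho)^n$, and to derive this directly from Lemma~\ref{lemma_AutR}. Writing $[a]$ for the $\rho$-class of $a\in A$, the candidate map is $\bar f([a_1],\dots,[a_n]) = ([b_1],\dots,[b_n])$, where $(b_1,\dots,b_n)=f(a_1,\dots,a_n)$; the whole argument reduces to showing this is independent of the chosen representatives.

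First I would observe that the componentwise relation $\rho^n$ on $A^n$ (the relation $R^n$ of Lemma~\ref{lemma_AutR} in the case $R=\rho$) is again an equivalence relation, and that its classes are exactly the products $[a_1]\times\dots\times[a_n]$ of $\rho$-classes. Consequently there is a natural identification $A^n/\rho^n \cong (A/\rho)^n$, so specifying a well-defined map on $(A/\rho)^n$ is the same as specifying a map on the quotient $A^n/\rho^n$.

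Next, since $f\triangleright\rho$ means $f\in\Pol(w_\rho)^{[n]}$, Lemma~\ref{lemma_AutR} gives $f\in\Aut(A^n,\rho^n)$. In particular $f$ preserves $\rho^n$: if $a_i\,\rho\,a_i'$ for every $i$, then the pair $(a,a')$ lies in $\rho^n$, hence so does $(f(a),f(a'))$, i.e.\ $f(a)_j\,\rho\,f(a')_j$ for every $j$. This is precisely the assertion that $\bar f$ does not depend on the choice of representatives, so $\bar f$ is well defined; and because $f$ is a bijection that also reflects $\rho^n$, the induced $\bar f$ is in fact a bijection of $(A/\rho)^n$.

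The argument carries essentially no difficulty. The single point worth stating carefully is the identification $A^n/\rho^n\cong(A/\rho)^n$ coming from the product structure of the $\rho^n$-classes; once that is in place, well-definedness is just the forward implication of $f\in\Aut(A^n,\rho^n)$ supplied by Lemma~\ref{lemma_AutR}. I do not expect any genuine obstacle: the result is a direct corollary of that lemma.
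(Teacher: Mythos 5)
Your argument is correct and complete: reading ``well-defined on $A/\rho$'' as the existence of the induced map on $(A/\rho)^n\cong A^n/\rho^n$, the claim is exactly the statement that $f$ preserves $\rho^n$, which follows from $f\in\Aut(A^n,\rho^n)$ via Lemma~\ref{lemma_AutR}. The paper states this lemma without proof (it is treated as immediate, feeding into the definition of the natural homomorphism $\epsilon:B(A)\rightarrow B(A/\rho)$ that follows), and your derivation is precisely the justification the paper implicitly relies on.
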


Let $\epsilon:B(A) \rightarrow B(A / \rho)$ be the natural homomorphism.
Then $\ker \epsilon$ is the set of bijections that are mapped to the identity.
This is not a permutation clone, as it does not include the wire permutations $\Pi$,
but it is a reversible iterative algebra.

\begin{lemma}
 Let $\rho$ be an equivalence relation on $A$. Let $C \leq \PPol(\rho)$
 be a sub-permutation clone.
 Then $\ker \epsilon ^{[n]}$ is a normal subgroup of $C^{[n]}$ and 
 $C^{[n]}$ is a group extension of $\epsilon C$ by $\ker \epsilon ^{[n]}$.
\end{lemma}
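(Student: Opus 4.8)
The plan is to read this as nothing more than the first isomorphism theorem applied to $\epsilon$ restricted to each arity, after pinning down one point of interpretation. That point is the meaning of $\ker\epsilon^{[n]}$: the full kernel of $\epsilon$ inside $\PPol(\rho)$ need not sit inside $C^{[n]}$ --- if $C=\Pi$, then $C^{[n]}$ consists only of wire permutations, and for a nontrivial $\rho$ essentially none of these fix every $\rho^n$-class setwise --- so for the statement to be correct one must take $\ker\epsilon^{[n]}$ to be the kernel of $\epsilon$ \emph{restricted to $C$}, that is, $\ker\epsilon^{[n]}=\{f\in C^{[n]}\mid \epsilon(f)=i_n\}$.

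First I would check that $\epsilon$ restricts to a group homomorphism on each arity. By the preceding lemma every $f\in\PPol(\rho)$ respects $\rho$ in each component, so it descends to a well-defined bijection $\epsilon(f)$ of $A^n/\rho^n=(A/\rho)^n$; as $C\leq\PPol(\rho)$ this covers all of $C^{[n]}$. Passing to the quotient is functorial --- composing two maps of $A^n$ and then quotienting agrees with quotienting first and composing the induced maps --- so $\epsilon$ preserves $\bullet$, and it restricts to a group homomorphism $\epsilon|_{C^{[n]}}\colon C^{[n]}\to Sym((A/\rho)^n)=B_n(A/\rho)$. Its image is a subgroup, and it equals $(\epsilon C)^{[n]}$ since $\epsilon$ preserves arity and maps the permutation clone $C$ onto a permutation clone $\epsilon C$ (the image is closed under $\oplus$ and $\bullet$ and contains the wire permutations of $A/\rho$).

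Granting this, both assertions are immediate: the kernel $\ker\epsilon^{[n]}$ of the group homomorphism $\epsilon|_{C^{[n]}}$ is, like any kernel, a normal subgroup of $C^{[n]}$, and the first isomorphism theorem gives $C^{[n]}/\ker\epsilon^{[n]}\cong(\epsilon C)^{[n]}$, which is precisely the assertion that $C^{[n]}$ is a group extension of $\epsilon C$ by $\ker\epsilon^{[n]}$.

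The hard part will be purely the bookkeeping of the first step: confirming the induced map is well defined and that $\epsilon$ really preserves serial composition, so that its restriction to the group $C^{[n]}$ is a genuine group homomorphism. Everything past that is the isomorphism theorem, with the one extra care of identifying $\epsilon(C^{[n]})$ with the $n$-ary slice $(\epsilon C)^{[n]}$ using that $\epsilon$ does not alter arity.
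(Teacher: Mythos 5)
The paper states this lemma without any proof, so there is nothing to compare against line by line; your argument is the standard one the author evidently has in mind (restrict the quotient map $\epsilon$ to each arity, observe it is a group homomorphism into $Sym((A/\rho)^n)$, and invoke the first isomorphism theorem), and it is correct. Your preliminary observation is also well taken and worth recording: as literally defined in the paper, $\ker\epsilon$ is the kernel inside all of $\PPol(\rho)$, which need not be contained in a sub-permutation clone $C$ (e.g.\ $C=\Pi$), so the lemma only makes sense with $\ker\epsilon^{[n]}$ read as $\ker\epsilon^{[n]}\cap C^{[n]}$, exactly as you do. The remaining bookkeeping you flag --- well-definedness of the induced map on $(A/\rho)^n$, compatibility with $\bullet$, and the identification of $\epsilon(C^{[n]})$ with $(\epsilon C)^{[n]}$ since $\epsilon$ preserves arity --- is all routine and handled correctly.
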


\begin{example}
 Let $A=\{0,1,2,3\}$ and $\rho=01\mid 23$.
 Then $\ker \epsilon^{[n]} = B_n(\{0,1\})\bullet B_n(\{2,3\})$ and $\epsilon (\PPol(\rho))\cong B(\{0,1\})$
 with the isomorphism  $[0]\mapsto 0$ and $[2]\mapsto 1$.
 
 We see that $CP(\{0,1\},(01)(23)) \in \ker \epsilon$ because the permutation is
 within the equivalence classes.
 Let $f=CP(\{0,1\},(02)(13))$. 
 Then $f \triangleright \rho$, $\epsilon(f)=CP(0,(01\,10))$ is the Fredkin gate.
\end{example}

This is a very clean example, as all the equivalence classes are the same size, so
$\PPol(\rho)$ can permute them.

\begin{example}
 Let $A=\{0,1,2,3\}$ and $\rho=0\mid 123$.
 Then $\ker \epsilon^{[n]} = B_n(\{0\})\bullet B_n(\{1,2,3\})$ and 
 $\epsilon (\PPol(\rho))\leq Cons_{[0]} \leq B(\{[0],[1]\})$ because $\PPol(\rho)$ cannot map
 between the equivalence classes because of their different sizes (Lemma \ref{lemma_countingCons}).
\end{example}

Thus the lattice of (relationally defined) permutation clones below the
permutation clone defined by an equivalence relation can be determined using
inverse monoid extensions, see e.g.\ \cite{CARVALHO_GRAY_RUSKUC_2011} for some details. 

\subsection{Affine Maps}

The following result has been found various times, see for instance 
\cite{Demetrovics_Bagyinszki_1982} for the inclusion diagram for $\vert A \vert=3$, 
Proposition 2.9 in \cite{szendrei1986clones} or \cite{salomaa_1964}.

\begin{lemma}
 Let $\vert A \vert$ be a prime. Then 
 $\Aff(A)$ has finitely many relationally defined sub-permutation clones.
 \end{lemma}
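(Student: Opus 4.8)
The plan is to reduce the statement to the classical fact, established in the cited works \cite{salomaa_1964,szendrei1986clones,Demetrovics_Bagyinszki_1982}, that the clone of affine operations over $\Z_p$ has only finitely many subclones. The bridge is that $\Aff(A)$ is itself relationally defined, so this clone-theoretic finiteness can be transported through the Galois connections to the permutation-clone side. First I would record the key identity: writing $\alpha = \{(a,b,c,d)\mid a+b=c+d\}$ for the affine relation, the affine weight example together with the remark that $\Pol(w)=\Aff(A)$ for $A=\Z_p$ shows $\Aff(A) = \PPol(\alpha)$. In particular a permutation clone $C$ satisfies $C\subseteq \Aff(A)$ iff every $f\in C$ respects $w_\alpha$, that is, iff $w_\alpha \in \Inv(C)$, equivalently $\alpha \in Rl(\Inv(C))$.

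Next I would set up a surjection from relational clones onto the objects to be counted. Let $C$ be any relationally defined sub-permutation clone of $\Aff(A)$, say $C=\PPol(\RR)$. By Lemma \ref{lemma_relCloneClosure} the set $Rl(\Inv(C))$ is a relational (co)clone; it contains $\RR$, and since it is exactly the family of all relations respected by $C$ we have $\PPol(Rl(\Inv(C))) = \PPol(\RR)=C$. By the identity of the previous paragraph this relational clone also contains $\alpha$. Hence the assignment $\RR \mapsto \PPol(\RR)$, restricted to relational clones that contain $\alpha$, maps \emph{onto} the relationally defined sub-permutation clones of $\Aff(A)$. It therefore suffices to bound the number of relational clones containing $\alpha$.

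Finally I would invoke the clone--relational-clone Galois connection on the finite set $A$. The order-reversing bijection given by $\Pol$ and $\Inv$ restricts to an antitone bijection between the relational clones containing $\alpha$ (equivalently, containing $\langle \alpha\rangle_{rc}$) and the clones contained in $\Pol(\alpha)$, the clone of affine operations over $\Z_p$. Since $|A|=p$ is prime, that affine clone has only finitely many subclones by the cited classification, so there are only finitely many relational clones containing $\alpha$; composing with the surjection of the previous paragraph yields only finitely many relationally defined sub-permutation clones of $\Aff(A)$.

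The genuine content, and the only real obstacle, is the appeal to the finiteness of the lattice of subclones of the affine clone over a prime field; everything else is bookkeeping with the two Galois connections. Indeed the point of the proof is precisely to package $\Aff(A)$ as a \emph{relationally} defined permutation clone, so that the classical clone-lattice result transfers verbatim. (One should also note the contrast with the general clone setting: the collapse phenomenon means many distinct subclones of the affine clone may yield the same permutation clone under $\PPol$, so the count here is at most, not equal to, the number of subclones of the affine clone.)
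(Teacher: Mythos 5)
Your argument is correct and is exactly the route the paper intends: the paper states this lemma with no proof beyond citing the classical fact that the affine clone over a prime field has finitely many subclones, and your Galois-connection bookkeeping (identifying $\Aff(A)$ with $\PPol(\alpha)$, passing from a relationally defined sub-permutation clone $C$ to the relational clone $Rl(\Inv(C))\ni\alpha$ with $\PPol(Rl(\Inv(C)))=C$, and then using the antitone bijection between relational clones containing $\alpha$ and subclones of $\Pol(\alpha)$) is the correct way to make that transfer explicit. Your closing remark that the map is only a surjection, so the count is an upper bound, matches the paper's subsequent caveat that this says nothing about sub-permutation clones of $\Aff(A)$ that are not relationally defined.
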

 
 %%((From \cite{Demetrovics_Bagyinszki_1982}  we can draw the lattice))

Note that this does not show that there are only finitely many sub-permutation clones of the affine
permutation clone. There are sub-permutation clones of the affine maps that are not defined by relations.
Let $A=\Z_p$ be of prime order, let the weight $w:A^3 \rightarrow A$ be defined by
$(a,b,c) \mapsto (a-c)\cdot (b-c)\in \Z_p$. Then $\Pol(w)\cap \Aff(A)$ will be the collection of affine maps 
for which the linear part of the map is orthogonal.
This cannot be a relationally defined permutation clone.

\subsection{Central Relations}

Let $\rho \subseteq A^h$ be a relation. The \emph{center} of $\rho$ is
$c(\rho)= \{a \in A \mid \forall a_2,\dots,a_h \in A,\, (a,a_2,\dots,a_h)\in \rho\}$.

 A relation $\rho \subseteq A^h$ is \emph{central} iff it is totally reflexive, totally symmetric and has a non void center that is a proper subset of $A$.

\subsection{Degenerate or Essentially Unary Maps}

We saw above that the essentially unary maps have the interesting property that they can be 
defined as a relational as well as by a cancellative weight.

The maps are known by two names. 
In the field of reversible computation, they are known as degenerate maps \cite{aaronsonetal15}, 
probably because they do not involve any interaction between their inputs.
In the clone theory field, they are known as essentially unary maps, because the output is only dependent upon a single input, thus they are generalise being unary.
In clone theory, there is some use of the transformation monoid that is at the core of a clone
of essentially unary maps. In our case, that will be a collections of permutations of our set $A$,
a permutation group.

\begin{theorem}
\label{theorem_neq_degenerate}
 Let $A$ be a finite set with three or more elements.
 Let $\eta = \{(a,b) \mid a \neq b\} \subset A^2$ be the binary non-equality relation.
 Then $\PPol(\eta)=\Deg(A)$.
\end{theorem}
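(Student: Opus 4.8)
The plan is to reduce to the Hamming graph and invoke Theorem~\ref{theoremHamming}. Write $m=|A|$. The inclusion $\Deg(A)\subseteq\PPol(\eta)$ is immediate: a degenerate map $\pi_\alpha\bullet(\beta_1,\dots,\beta_n)$ permutes coordinates and applies bijections coordinatewise, and bijections preserve non-equality, so two tuples differing in every coordinate are sent to two tuples differing in every coordinate. For the reverse inclusion I would use Lemma~\ref{lemma_AutR} to write $\PPol(\eta)^{[n]}=\Aut(A^n,\eta^n)$, where $(a,b)\in\eta^n$ iff $a$ and $b$ differ in \emph{every} coordinate. Call this graph $G$ (it is the categorical power of the complete graph $K_m$); it then remains to show $\Aut(G)\subseteq S_m wr S_n$.

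The key device is a common-neighbour count. For $u,v\in A^n$ let $s(u,v)$ be the number of coordinates in which they agree, and let $N(u)$ denote the neighbourhood of $u$ in $G$. A tuple $w$ is adjacent to both $u$ and $v$ precisely when, in the $s(u,v)$ coordinates where $u_j=v_j$, we have $w_j\neq u_j$ (giving $m-1$ choices each), and in the remaining coordinates $w_j\notin\{u_j,v_j\}$ (giving $m-2$ choices each). Hence
\[
|N(u)\cap N(v)| = (m-1)^{s(u,v)}\,(m-2)^{\,n-s(u,v)}.
\]
Since $m\geq 3$ we have $m-1>m-2\geq 1$, so this quantity is strictly monotonic in $s(u,v)$; thus $s(u,v)$ is determined by the number of common neighbours, which is a graph invariant. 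Any $f\in\Aut(G)$ therefore satisfies $s(f(u),f(v))=s(u,v)$ for all pairs. In particular $f$ preserves the pairs with $s=n-1$, that is, the pairs differing in exactly one coordinate, which are precisely the edges of the Hamming graph $H(m,n)$.

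Consequently $\Aut(G)\subseteq\Aut(H(m,n))$, and by Theorem~\ref{theoremHamming} the latter equals $S_m wr S_n=\Deg(A)^{[n]}$. Combining this with the easy inclusion yields $\Deg(A)^{[n]}\subseteq\Aut(G)\subseteq\Aut(H(m,n))=\Deg(A)^{[n]}$, forcing all three to coincide; ranging over $n$ gives $\PPol(\eta)=\Deg(A)$. The one genuinely load-bearing step is the monotonicity of the common-neighbour formula, and this is exactly where the hypothesis $|A|\geq 3$ enters: for $m=2$ the factor $(m-2)^{\,n-s}$ vanishes whenever $s<n$, the count no longer separates the values of $s$, and indeed $\PPol(\eta)$ is strictly larger than $\Deg(A)$ in that case (the relation $\eta^n$ becomes a perfect matching on $A^n$, with a much larger automorphism group).
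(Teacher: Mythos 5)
Your proof is correct, and it takes a genuinely different route from the paper's. The paper establishes the reverse inclusion by a stabilizer-chain computation inside $G=\Aut(A^n,\eta^n)$: it bounds the indices of successive point stabilizers ($G_{e_0}$, $G_{e_0,e_1}$, \dots, then a subgroup $K$ fixing enough vertices to be trivial), concludes $\vert G\vert \leq (\vert A\vert !)^n\, n!$, and matches this against $\vert \Deg(A)^{[n]}\vert$ to force equality; Theorem~\ref{theoremHamming} is not invoked in that argument (the paper uses it only in the preceding lemma about the weight $w$). You instead compute the common-neighbour count $\vert N(u)\cap N(v)\vert = (m-1)^{s(u,v)}(m-2)^{n-s(u,v)}$ in the categorical power of $K_m$, observe that for $m\geq 3$ this is strictly monotone in the agreement count $s(u,v)$, and deduce that every automorphism preserves $s$, hence preserves the pairs with $s=n-1$, i.e.\ Hamming adjacency; this gives $\Aut(A^n,\eta^n)\subseteq \Aut(H(m,n))=S_m\, wr\, S_n$ by Theorem~\ref{theoremHamming}, and the easy inclusion closes the sandwich. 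Your version is shorter, avoids the somewhat under-specified notation of the paper's stabilizer argument ($e_i$, $e_i^{0,1}$, $e_a$ are never formally defined there), and isolates exactly where the hypothesis $\vert A\vert\geq 3$ enters, together with a correct explanation of why the statement fails for $\vert A\vert=2$ ($\eta^n$ degenerates to a perfect matching). The trade-off is that it leans on the external classification of $\Aut(H(m,n))$, whereas the paper's counting argument is self-contained for this particular graph; since the paper already cites and uses Theorem~\ref{theoremHamming} elsewhere, that cost is negligible.
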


\begin{proof}
 Let $f\in \Deg(A)^{]n]}$. Let 
 $ \left[
\begin{array}{c}
x \\
y
\end{array}
\right]  \in \eta^n$.
Then for all $i$, $f(x)_i$ depends only upon one input, wlog, $x_j$. Then $f(x)_i = f(y)_i$ 
iff $x_j=y_j$, but $x_j\neq y_j$ so $f(x)_i \eta f(y)_i$. 
Thus 
 $ \left[
\begin{array}{c}
f(x) \\
f(y)
\end{array}
\right]  \in \eta^n$, so $f \triangleright \eta$. Thus $\Deg(A) \leq \PPol(\eta)$.

Let $A=\{0,\dots,m-1\}$.
Choose some $n$.
Let $\Gamma$ be the graph $(A^n,\eta)$, let $G=\Aut(\Gamma)$.
$G$ is transitive on $A^n$ so $G_{e_0}$ is index $\vert A \vert^n$ in $G$.
$G_{e_0}$ is transitive on $(A \setminus \{0\})^n$ so
$G_{e_0,e_1}$ has index $(A \setminus \{0\})^n=(\vert A\vert-1)^n$ in $G_{e_0}$.

Thus $G_{e_0,e_1,\dots,e_{m-1}}$ has index $\vert A \vert^n (\vert A\vert-1)^n \dots 2.1 = (\vert A\vert !)^n$
in $G$.
Let $H=G_{e_0,e_1,\dots,e_{m-1}}$.
$H$ fixes every subset of $A$.
Then for $x\in \{0,a\}^n$, $a\not\in\{0,1\}$, $(e_i^{0,1},x)\in \Gamma$ or equivalently, $ \left[
\begin{array}{c}
e_i^{0,1} \\
x
\end{array}
\right]  \in \eta^n$ iff $x=e_i^{a,0}$ or $x=e_a$.
Then if $f(e_i^{0,1})$ has more than one $1$, there will be more than two neighbours of $f(e_i^{0,1})$
in $\{0,a\}^n$, which would mean that $f$ is not an automorphism of $\Gamma$. Thus
$f(e_i^{0,1})=e_j{0,1}$ for some $j$.

Thus $H$ fixes the set $\{e_i^{0,1}\mid i=1,\dots,n\}$ as a set, and is transitive on them.
Let $K\leq H$ fix these pointwise, so $K$ is of index $n!$ in $H$.

Then by the adjacency argument above, for all $a\not\in\{0,1\}$, $e_i^{a,0}$ is fixed by all $f\in K$ and similarly for all $a\neq b$, $e_i^{a,b}$ is fixed by $K$.

Lt $f\in K$. Let $x\in \{0,1\}^n$, $a\not\in\{0,1\}$. Suppose $x_i=1$. Then $(x,e_i^{a,0})\in \Gamma$ so 
$(f(x),e_i^{a,0})\in \Gamma$ thus $f(x)_i=1$. Similarly for $x_i=0$ so we see that $K$ fixes $\{0,1\}^n$ pointwise. Similarly $K$ fixes all $\{a,b\}^n$ pointwise.

Let $x\in A^n$.
Assume $x_i= a\in A$. Let $c\in A$, $c\neq a$.
Then for all $j$, let $\bar x_j=c$ if $x_j=a$, $\bar x_j=a$ otherwise.
Then $\bar x$ is fixed by $K$.
Then $(x,\bar x) \in \Gamma$ so $(f(x),f(\bar x)=(f(x),\bar x) \in \Gamma$, so $f(x)_i \neq c$.
Since $c$ was arbitrary and not equal to $a$, we see that $f(x)_i=a$.
Thus $f$ fixes all of $A^n$ so $K$ is trivial.

Thus the size of $G$ is $(G:H)(H:K) = ((\vert A\vert !)^n)(n!)$ which is equal to the size of $\Deg(A)^{[n]}$ so by inclusion, the two are the same.

Thus $\PPol(\eta)=\Deg(A)$ and we are done.
\qed
\end{proof}

When $A$ has two elements, the $\eta$ relation corresponds to the self-duality relation and so 
$\PPol(\eta)$ is not degenerate.

The following notes are taken from \cite[Theorem 2.2]{szendrei2012}

Let $S$ be the clone of mappings that are either essentially unary or nonsurjective, called S{\l}upecki's clone.
Then  for $2\leq i \leq \vert A\vert$, there exist a series of subsets, $S_i \subseteq \OO(A)$, that include 
all maps with range of order $i$ or less, together with the essentially unary maps.
$S_{\vert A\vert} = \OO(A)$ and $S_{\vert A\vert-1} = S$.
Then there exists a uniquely defined $S_1$ and we define $S_0$ to be the transformation monoid on $A$,
so that $S_0\subsetneq S_1\subsetneq \dots \subsetneq S_{\vert A\vert-1}  \subsetneq S_{\vert A\vert} =\OO(A)$ is an unrefinable strict chain, meaning that $S_i\leq S_{i-1}$ is a maximal subclone.

\begin{definition}
Let $A$ be a finite set.

Define $\beta = \{(a_1,a_2,a_3,a_4) \in A^4\mid a_1=a_i \wedge a_j=a_k, \, \{1,a_i,a_j,a_k\}=\{1,2,3,4\}\}$.

For $2\leq m \leq \vert A\vert$, define
$\iota_m=\{(a_1,\dots,a_m) \mid \exists i\neq j:\:a_i=a_j \}$. That is, all $m$-tuples that contain at least one repetition.
\end{definition}

\begin{theorem}[\protect{\cite[Theorem 2.2]{szendrei2012}}]
\label{thm_szendrei_slupecki}
 $S_1=\Pol(\beta)$ and for all $m\leq \vert A\vert$; $S_{r-1}=\Pol(\iota_r)$.
\end{theorem}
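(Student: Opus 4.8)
The plan is to prove each of the two claimed equalities by the standard pair of inclusions $S_i \subseteq \Pol(\rho)$ and $\Pol(\rho) \subseteq S_i$, handling the quaternary relation $\beta$ (which must yield $S_1$) separately from the relations $\iota_r$ (for $3 \le r \le |A|$, yielding $S_2,\dots,S_{|A|-1}$). The reason for the split is that $\iota_2$ is just the equality relation, so $\Pol(\iota_2)=\OO(A)$ cannot single out $S_1$; this is precisely the gap at the bottom of the chain that the relation $\beta$ is designed to fill. Throughout, $\Pol(\iota_r)$ is read via the polymorphism picture: $f$ preserves $\iota_r$ iff every $r\times n$ array whose columns each contain a repeated entry is sent, row-wise, to an $r$-tuple that again contains a repeated entry.

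For $S_{r-1}=\Pol(\iota_r)$, the inclusion $S_{r-1}\subseteq\Pol(\iota_r)$ is a short case analysis over the two kinds of members of $S_{r-1}$. If $\operatorname{range}(f)$ has size at most $r-1$, then the output $r$-tuple draws from fewer than $r$ values and repeats by pigeonhole; if $f=g\circ\pi_j$ is essentially unary, the output $r$-tuple is $g$ applied to the $j$-th column, which already repeats, and $g$ preserves equalities, so the output repeats. Hence both generating families of $S_{r-1}$ lie in $\Pol(\iota_r)$.

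The reverse inclusion is the heart of the matter, and I would argue it by contraposition: assuming $f\notin S_{r-1}$, i.e.\ $f$ is essentially non-unary with $|\operatorname{range}(f)|\ge r$, construct an $r\times n$ witness array all of whose columns lie in $\iota_r$ but whose row-wise image has $r$ distinct entries. The building block is the standard fact that essential dependence on two coordinates, say $1$ and $2$, yields a $2\times 2$ box on which $f$ is non-constant in each direction; one then amplifies this, using that the range is large, to select $r$ input rows realising $r$ distinct output values, and finally chooses the preimages coordinate by coordinate so that no column becomes injective. Making this last step work is the main obstacle: guaranteeing $r$ distinct outputs while simultaneously forcing a repetition in each of the $n$ columns is delicate, since a naive choice of $r$ preimages can produce an injective column. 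Essential non-unarity is exactly the extra freedom that lets one perturb entries within an offending column without collapsing the distinctness of the outputs.

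For $S_1=\Pol(\beta)$ I would run the same scheme against the ``two equal pairs'' relation $\beta$. The inclusion $S_1\subseteq\Pol(\beta)$ splits into the essentially unary maps (which carry the pairing of a $\beta$-column through $g$) and the remaining non-unary members of $S_1$, whose two-valued outputs respect the pairing; here one checks directly that the forbidden multiplicity pattern $(3,1)$ over a two-element image never arises. For $\Pol(\beta)\subseteq S_1$ I would first observe that projecting out the fourth coordinate of $\beta$ yields exactly $\iota_3$ (given three distinct values no fourth entry completes two equal pairs), so $\Pol(\beta)\subseteq\Pol(\iota_3)=S_2$ and every $\beta$-preserver is essentially unary or of range at most $2$; it then remains to rule out the ``generic'' range-$2$ maps by feeding them a $\beta$-array whose image realises the $(3,1)$ pattern and hence escapes $\beta$. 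The obstacle here parallels the $\iota_r$ case, but is finer: one must control not merely whether outputs repeat but the full two-pair partition structure of the image $4$-tuple, so the witness arrays must be engineered with the pairing in mind rather than with a bare repetition.
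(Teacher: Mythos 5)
You should first be aware that the paper does not prove this statement at all: it is imported verbatim as \cite[Theorem 2.2]{szendrei2012} (essentially Burle's theorem together with S{\l}upecki's criterion), so there is no in-paper argument to compare yours against. Judged on its own terms, your outline has the right shape, and the parts you actually carry out are correct: $S_{r-1}\subseteq\Pol(\iota_r)$ by pigeonhole for small range and by equality-preservation for essentially unary maps, and $\Pol(\beta)\subseteq\Pol(\iota_3)$ because projecting $\beta$ onto its first three coordinates gives exactly $\iota_3$. But the two steps you yourself flag as ``the main obstacle'' are where the entire content of the theorem lives, and you have not supplied them.

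Concretely: for $\Pol(\iota_r)\subseteq S_{r-1}$ you must, given $f$ essentially non-unary with $\vert\operatorname{range}(f)\vert\ge r$, exhibit $r$ input tuples with pairwise distinct images such that no coordinate column is injective. Saying that essential non-unarity ``lets one perturb entries within an offending column without collapsing the distinctness of the outputs'' restates the goal rather than achieving it: a perturbation that repairs column $j$ can simultaneously change an output value and break a previously repaired column $j'$, and controlling this over all $n$ columns at once is the actual combinatorial work in the source. More seriously, the half $S_1=\Pol(\beta)$ cannot be completed as written because neither the paper nor your proposal ever says what $S_1$ is (the paper only asserts that ``there exists a uniquely defined $S_1$''). $S_1$ is Burle's clone of quasilinear operations --- essentially unary maps together with maps of the form $g(h_1(x_1)\oplus\cdots\oplus h_n(x_n))$ with $h_i:A\to\Z_2$ --- and your phrase that the non-unary members have ``two-valued outputs [that] respect the pairing'', with the $(3,1)$ multiplicity pattern never arising, is precisely the quasilinearity condition re-expressed as $\beta$-preservation; so the inclusion $S_1\subseteq\Pol(\beta)$ is assumed rather than derived. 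Symmetrically, ``rule out the generic range-$2$ maps'' is exactly the claim that every $\beta$-preserving map of range $2$ is quasilinear, which is the theorem itself, not a step toward it. To repair this you would need to import Burle's explicit description of $S_1$ and verify both inclusions against it --- or simply cite the result, as the paper does.
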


When we look at these relations for defining permutation clones, we note that the various $S_i$ contain
unbalanced maps and essentially unary maps. So the collection of permutation clones defined by these relations collapses.

\begin{definition}
 Let $R \subseteq A^k$ be a relation.
 We say that $R$ is \emph{totally symmetric} if for all permutations $\alpha \in S_k$,
 $(a_1,\dots,a_k) \in R$ iff $(a_{\alpha(1)},\dots,a_{\alpha(k)}) \in R$.
\end{definition}

\begin{theorem}
\label{thm_include_iota}
Let $3 \leq m \leq \vert A\vert$. 
Let $\rho\subseteq A^m$ be a totally symmetric relation that includes $\iota_m$.
 Then $\PPol(\rho)\leq\PPol(\iota_m)=Deg(A)$.
\end{theorem}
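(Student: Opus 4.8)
The plan is to split the statement into the equality $\PPol(\iota_m)=\Deg(A)$ and the inclusion $\PPol(\rho)\subseteq\PPol(\iota_m)$, proving the former first since it is clean. For $\Deg(A)\subseteq\PPol(\iota_m)$ I would note that an essentially unary bijection acts on each column of an array by applying a single bijection of $A$ to a permuted copy of that column, so a column with a repeated entry is sent to a column with a repeated entry; hence arrays with all columns in $\iota_m$ are preserved. For the reverse inclusion, let $f\in\PPol(\iota_m)$. Each component $f_i$ lies in $\Pol(\iota_m)=S_{m-1}$ by Theorem~\ref{thm_szendrei_slupecki}, and $S_{m-1}$ consists of the maps of range at most $m-1$ together with the essentially unary maps. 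Since components of a bijection are balanced, each $f_i$ is surjective onto $A$, and $|A|\geq m>m-1$, so no $f_i$ can have range at most $m-1$; thus every $f_i$ is essentially unary, forcing $f\in\Deg(A)$. This already identifies $\PPol(\iota_m)=\Deg(A)$ with $\Aut(H(|A|,n))$ on each arity by Theorem~\ref{theoremHamming}.

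For the inclusion $\PPol(\rho)\subseteq\PPol(\iota_m)=\Deg(A)$ I would first dispose of $\rho=A^m$: there $\PPol(\rho)=B(A)$, so the statement is to be read with $\rho$ a proper relation, and properness yields an injective $m$-tuple $s\notin\rho$, equivalently an $m$-subset $S_0$ none of whose orderings lies in $\rho$. Writing $\rho=\iota_m\cup\bigcup_{S\in\mathcal S}\{\text{orderings of }S\}$ for a proper family $\mathcal S$ of $m$-subsets, the goal by Lemma~\ref{lemma_AutR} is $\Aut(A^n,\rho^n)\subseteq\Aut(H(|A|,n))=\Deg(A)^{[n]}$. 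Since $\rho$ is totally symmetric, $\rho^n$ is a totally symmetric (hypergraph) relation on $A^n$ in which a constant column is always admissible (it lies in $\iota_m\subseteq\rho$) while a column of $m$ distinct values is admissible only when its value set lies in $\mathcal S$. The lines of $A^n$ (points agreeing in all but one coordinate) are exactly the configurations in which only one column is non-constant, so recovering lines, and hence Hamming adjacency, should be possible from $\rho^n$ together with the counting weight $c_{\rho,m}$, which every $f\in\PPol(\rho)$ respects by Lemma~\ref{lemma_countingCons} and whose use is where the bijectivity of $f$ enters.

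The hard part will be exactly this recovery. The obstacle is that $\iota_m\subseteq\rho$ makes repetitions free: if a preimage array has two equal rows then so does its image, so every column of the image repeats automatically, and arrays drawn only from $\iota_m$ impose no constraint beyond bijectivity. All leverage must therefore come from the genuinely injective tuples of $\rho$ and from the forbidden set $S_0$, and it cannot be extracted one component at a time, since each component of $f$ does lie in $\Pol(\rho)$, which need not be contained in $S_{m-1}$. This is why I route the argument through the counting weight $c_{\rho,m}$ (and, if needed, the relations cut out by its extreme values via Lemma~\ref{lemma_maxCons}), which encodes bijectivity globally; in worked examples these data already separate the elements covered by $\mathcal S$ from those that are not and collapse $\PPol(\rho)$ into a non-equality type constraint, so that $\PPol(\rho)\subseteq\PPol(\eta)=\Deg(A)$ by Theorem~\ref{theorem_neq_degenerate}. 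Making this uniform over all families $\mathcal S$ is the crux, and I expect it to require either a careful line-recovery argument or a stabiliser-chain computation of $|\Aut(A^n,\rho^n)|$ in the spirit of the proof of Theorem~\ref{theorem_neq_degenerate}.
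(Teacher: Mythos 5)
Your first half is correct and coincides with the paper's argument: $\Deg(A)\subseteq\PPol(\iota_m)$ because an essentially unary bijection sends a column with a repetition to a column with a repetition, and conversely each component of an $f\in\PPol(\iota_m)$ lies in $S_{m-1}=\Pol(\iota_m)$ by Theorem~\ref{thm_szendrei_slupecki} and, being balanced, cannot have range of size at most $m-1<\vert A\vert$, hence is essentially unary. You also correctly diagnose why the inclusion $\PPol(\rho)\subseteq\PPol(\iota_m)$ cannot be read off componentwise ($\iota_m\subseteq\rho$ does not give $\Pol(\rho)\subseteq\Pol(\iota_m)$) and that the leverage must come from bijectivity via the counting weight $c_{\rho,m}$.

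For that inclusion, however, you only lay out the ingredients and then declare the assembly to be ``the crux,'' deferring to a hypothetical line-recovery or stabiliser-chain argument. That is a genuine gap: the main claim of the theorem is left unproved. The paper closes it with a short computation, not a reconstruction of Hamming adjacency. Since every tuple of $\rho\setminus\iota_m$ is repetition-free, $c_{\rho,m}(a_1,\dots,a_{m-1})=\vert A\vert$ whenever the arguments contain a repeat (every completion lands in $\iota_m\subseteq\rho$), while on repetition-free arguments it equals $m-1$ plus the number of completions into $\rho\setminus\iota_m$; as $m-1<m\leq\vert A\vert$, the minimal value $m-1$ is attained only on repetition-free tuples with no completion into $\rho\setminus\iota_m$. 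The minimal-value form of Lemma~\ref{lemma_maxCons} then gives $\PPol(\rho)\leq\PPol(\gamma)$ for this repetition-free relation $\gamma$, whose projection onto two coordinates is $\eta$, and Theorem~\ref{theorem_neq_degenerate} finishes. (Your unease is not entirely misplaced: the paper tacitly assumes $\gamma$ is nonempty and projects onto all of $\eta$, which fails for $\rho=A^m$ --- a case you rightly flag --- and more generally when every repetition-free $(m-1)$-tuple completes into $\rho\setminus\iota_m$; but the intended argument is this two-line weight computation, and your proposal neither carries it out nor supplies the harder alternative it promises.)
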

\begin{proof}
First we show that $\PPol(\iota_m)=Deg(A)$ using  Theorem \ref{thm_szendrei_slupecki} above.
  $S_{m-1}=\Pol(\iota_m)$ consists of essentially unary maps and those with image
 of size less than or equal to $m-1$. Maps with image of size less than $\vert A\vert$ cannot be balanced, 
 cannot appear as components of bijections. Thus all components maps 
 of $\PPol(\iota_m)$ are essentially unary so all elements of  $\PPol(\iota_m)$ are degenerate.
 
 Applying Lemma \ref{lemma_countingCons} to $\iota_m$ we obtain
 \begin{eqnarray}
c_{\iota_m,m}(a_1,\dots,a_{m-1})&=&
\left\{
\begin{array}{l}
(m-1) \mbox{ if } a_1,\dots,a_{m-1} \mbox{ are all distinct}\\
\vert A \vert \mbox{ otherwise }
\end{array}
\right.
\end{eqnarray}

All tuples $(a_1,\dots,a_{m}) \in \rho \setminus \iota_m$ contain no repetitions, they are sets of $m$ 
distinct elements of $A$. Thus

 \begin{eqnarray}
c_{\rho,m}(a_1,\dots,a_{m-1})&=&
\left\{
\begin{array}{l}
m \mbox{ or more, if } a_1,\dots,a_{m-1} \mbox{ are all distinct } \\
\hspace{15mm}\wedge(a_1,\dots,a_{m-1},a)\in \rho \setminus \iota_m \exists a\in A\\
(m-1) \mbox{ if } a_1,\dots,a_{m-1} \mbox{ are all distinct}\\
\vert A \vert \mbox{ otherwise }
\end{array}
\right.
\end{eqnarray}

Applying Lemma \ref{lemma_maxCons} to this weight, using the fact that $\vert A\vert \geq m > m-1$, 
we let $\gamma = \{(a_1,\dots,a_{m-1})\mid c_{\rho,m})(a_1,\dots,a_{m-1})=m-1\}$.
Then $\PPol(\rho) \leq \PPol(\gamma)$
Thus $\gamma$ consists of tuples that contain no repeats.
Let $\theta\subset A^2$ consisting of the first two entries of $\gamma$, so $\PPol(\gamma) \leq \PPol(theta)$. But $\theta=\eta$, so
by Theorem \ref{theorem_neq_degenerate} we know that $\PPol(\rho) \leq \PPol(\eta)=\Deg(A)$.
\qed
\end{proof}

If $\rho$ properly contains $\iota_m$ then $\rho \setminus \iota_m$ will define a set of $m$-sets of $A$.
Then $\PPol(\rho) = \langle \Gamma \rangle$ where $\Gamma \leq S_A$ is the permutation group on $A$
that fixes this set of $m$-sets.
This can also be seen as the automorphism of the $m$-uniform hypergraph with hyperedges given by the $m$-sets.

In any case, there are only a finite number of 
sub-permutation clones of $\PPol(\rho)$.

\subsection{$h$-regular // $h$-universal // $h$-generated }

The $h$ generated relations are defined by a complex reduction.
However the definition implies that these relations are totally reflexive and totally symmetry, thus they include $\iota_m$ and by Theorem \ref{thm_include_iota} above, the permutation clone generated by a $h$-regular relation is essemtially unary.

\subsection{Self dual}

Self dual mappings on a set $A$ of order $n$ respect a permutation of $A$
of prime order $p$ with no fixed points. Thus $p$ divides $n$.

If $A$ is of prime order, then these is one cycle and every map that respects that
cycle is balanced. Thus every map in the clone could be a component map of a bijection.

We have no further results in this case.

\section{ $\Pol(w_R)$ on a two element set}

The collection of clones on a two element set has been completely described by Post. 
Thus we can determine all relationally defined permutation clones on a two element set by inspecting each 
of the clones in the Post lattice.

\begin{table}[htp]
\caption{The $\beta$ classes of clones on the two element set, with their defining relations.}
\label{table_binary}
\begin{center}
\begin{tabular}{|c|c|c|}
\hline
Name & Description & Defining Relation \\
\hline
$\top$ & All maps & \{0,1\}\\
\hline
$D$ & Self dual & \{01,10\}\\
\hline
$DP$ & \makecell{Self dual, 0-preserving \\and 1-preserving} & \{0110,1010\}\\
\hline
$A$ & Affine & \{0000,0011,0101,0110,1001,1010,1100,1111\}\\
\hline
$AP_0$ & \makecell{Affine and 0-preserving,\\  i.e. linear} & \{000,011,101,110\}\\
\hline
$AD$ & Affine Self dual & $\{0000,0011,0101,0110,1001,1010,1100,1111\} \times \{01,10\}$\\
\hline
$AP_1$ & Affine and 1-preserving & \{00001,00111,01011,01101,10011,10101,11001,11111\}\\
\hline
$AP$ & Linear and 1-preserving  & \{0001,0111,1011,1101\}\\
\hline
$P_0$ & 0-preserving  & \{0\}\\
\hline
$P_1$ & 1-preserving  & \{1\}\\
\hline
$P$ & 0- and 1-preserving  & \{01\}\\
\hline
$U$ & \makecell{Essentially Unary \\ (Degenerate)}  & \{000,001,011,100,110,111\}\\
%\hline
%$UD$ & Essentially Unary Self dual &  $\{000,001,011,100,110,111\} \times \{01,10\}$\\
\hline
$\Pi$ & Projections & something trivial\\
\hline
\end{tabular}
\end{center}
\label{default}
\end{table}%

% \makecell{00001,00101,01101,10001,11001,11101, \\ 00010,00110,01110,10010,11010,11110}

%((check this statement again))
%
%Note that not all of these distinct clones lead to distinct permutation clones. 
%The essentially unary maps   are projections 
%or projections followed by a NOT or projections followed by a constant map. 
%In permutation clones we cannot have constant maps, so essentially unary 
%and essentially unary self-dual clones give the same permutation clone, 
%the degenerate maps $Deg(A)$. 
%Every other inclusion can be shown to be proper. 

\begin{theorem}
\label{thm_binary}
 There are exactly 13 relationally defined permutation clones on a set of order 2.
\end{theorem}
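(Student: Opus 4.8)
The plan is to reduce everything to the completely understood Boolean clone lattice (Post's lattice) and to show that, after passing to permutation clones, this countably infinite lattice collapses onto the thirteen classes exhibited in Table~\ref{table_binary}. By Lemma~\ref{lemma_relCloneClosure} the lattice of relationally defined permutation clones is a homomorphic image of the clone lattice, so every relationally defined permutation clone on $\{0,1\}$ has the form $PC(K)=\{f\in B(A)\mid f_i\in K \text{ for all components } i\}$ for a clone $K=\Pol(\RR)$ dual to its generating relational clone $\RR$. The first observation I would record is that $PC(K)$ depends only on the \emph{balanced} maps lying in $K$: since every component of a bijection is balanced, membership $f\in PC(K)$ tests $f_i$ only against $K\cap\{\text{balanced maps}\}$, whence $PC(K)=PC(K')$ whenever $K$ and $K'$ have the same balanced content. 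Thus $K\mapsto PC(K)$ factors through $\beta$-equivalence, and it suffices to traverse Post's lattice, read off the balanced content of each clone, and identify the resulting permutation clone.

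Second, I would split the clones into a finite \textbf{core} and the infinite families. The core — the top clone, the three preserving clones (preserving $0$, $1$, or both), the affine/linear family together with its $0$-preserving, $1$-preserving and self-dual subclones, and the self-dual family — is handled by a finite explicit computation guided by the defining relations in Table~\ref{table_binary}, yielding respectively $B(A)$; $P_0(A),P_1(A),P(A)$; the four affine permutation clones and the affine self-dual clone; and the two self-dual clones $D,DP$.

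Third, and this is where the main effort lies, I would show that all remaining clones contribute nothing new. The key is the balanced-map criterion from the first step: each of these clones either contains no balanced maps beyond wire-permutation components, so that $PC(K)=\Pi$ — this is exactly what happens for the monotone family, whose relational dual is the total order $\leq$ and for which $\PPol(\leq)=\Pi$ by Example~\ref{ex_totalorder} — or it contains precisely the essentially unary maps among its balanced maps, so that $PC(K)=\Deg(A)=U$, using that non-surjective maps are never balanced (cf.\ Theorem~\ref{thm_szendrei_slupecki} and Theorem~\ref{theorem_neq_degenerate}), as for S{\l}upecki's clone and the infinite ascending families above $T_0$ and $T_1$. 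The principal obstacle is verifying uniformly, rather than clone by clone, that every member of each infinite family falls into one of these two cases; the balanced-map criterion is precisely what makes this tractable, since it reduces each family to the single question of which multiary balanced functions it contains.

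Finally, I would confirm that the thirteen permutation clones so produced are pairwise distinct. This is a finite verification: each pair is separated either by exhibiting a gate in one but not the other, or by coarse invariants such as the unary part $C^{[1]}\in\{\{i_1\},S_A\}$, whether $C$ fixes $0\cdots0$ and/or $1\cdots1$, whether $C$ commutes with global negation (self-duality), and whether $C\subseteq\Aff(A)$. Assembling the core computations, the collapse of the infinite families to $\Pi$ and $U$, and the distinctness check yields exactly the thirteen relationally defined permutation clones listed in Table~\ref{table_binary}.
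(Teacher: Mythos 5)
Your overall strategy --- traverse Post's lattice, observe that a relationally defined permutation clone $PC(K)$ only sees the balanced maps of the clone $K$ because every component of a bijection is balanced, collapse the infinite families, and separate the thirteen survivors --- is the same as the paper's. There is, however, a genuine gap in the step where you dispose of the monotone and $T_0^2$/$T_1^2$ families. You assert that the monotone clone ``contains no balanced maps beyond wire-permutation components''; this is false, and the paper itself records a counterexample: the ternary map $g$ with $g^{-1}(0)=\{000,100,010,001\}$ is balanced, monotone, self-dual, respects $T_0^2$, and is not a projection. So the balanced-content criterion alone does not yield $PC(M)=\Pi$ or $PC(\Pol(T_0^2))=\Pi$; what is actually needed is the strictly stronger fact that such balanced maps never occur as components of a bijection all of whose components lie in the clone. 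The paper supplies this for $\leq$ via Jonsson's theorem on automorphisms of powers of posets (Lemma~\ref{lemma_partial_order}), and for $T_0^2$, $T_1^2$ by showing balanced polymorphisms are self-dual and monotone and then observing (in the remark after the proof) that the surviving balanced maps do not extend to permutations of $A^3$. Your appeal to Example~\ref{ex_totalorder} rescues the monotone case, but the $T_0^2$ and $T_1^2$ chains remain uncovered --- and you in fact misassign them, claiming the infinite families above $T_0$ and $T_1$ collapse to $U=\Deg(A)$ when they collapse to $\Pi$ (negation does not respect $T_0^2$, so these clones contain no nontrivial balanced unary map at all).

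A second, smaller shortfall: the pairwise distinctness of the thirteen classes is where most of the paper's proof lives --- it exhibits, for each relevant covering pair in Figure~\ref{figure_lattice_binary}, an explicit permutation lying in one clone but not the other. Your proposal defers this entirely to ``a finite verification'' by coarse invariants, but those invariants do not suffice on their own: for instance $AP$ and $DP$ have the same unary part, both fix $0\cdots0$ and $1\cdots1$, and both are self-dual, so separating them requires actually constructing a non-affine self-dual $0$- and $1$-preserving permutation. The explicit gate constructions cannot be skipped.
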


\begin{proof}

We start by showing that the clones defined by $T_0^2,\,M, T_1^2$ all lie in the same
$\beta$-class as the projections.

Let us consider $M$ first. 
A bounded partial order on two elements is a total order.
From Lemma \ref{lemma_partial_order} we know that $\PPol(\leq)=\Pi$.

$T_0^2=\{01,10,00\}$. Let $f \in \Pol(T_0^2)^{[n]}$, suppose $f$ is balanced. 
For all $x\in A^n$, 
$\left[\begin{array}{c}
x\\
\bar x
\end{array}\right] \in (T_0^2)^n$, so 
$\left[\begin{array}{c}
f(x)\\
f(\bar x)
\end{array}\right] \in T_0^2$ so one of them is zero. In order for $f$ to be balanced, 
one of them must be 0 and the other 1, so $f(\bar x) = \bar f(x)$, so $f\in D$.
However $ \Pol(T_0^2)\cap D \subseteq M$ and the only balanced maps in $M$ are projections.

Similarly $\Pol(T_1^2)$ contains no nontrivial balanced maps, so each of these 
clones lie in the same $\beta$ class as the projections.

 Now we show that $(U,UD) \in \beta$. The only balanced essentially unary maps on $A$ are 
 the identity and NOT. Which are self-dual, so $\beta(U)=\beta(UD)$.
 
We now look at the remaining 13 classes and show that each inclusion is proper, by demonstrating a bijection that is in one but not the other. 
We will do this from the top down in the inclusion lattices in Figure \ref{figure_lattice_binary}.
We represent our permutations as two columns with the domain on the left and the image of each element in the domain on the right, except when we can reprsent them by affine maps.

\begin{itemize}
\item $B(A)$: The map 
%$000\mapsto 001 \mapsto 010\mapsto 000$, $011\mapsto 100\mapsto 011$, $111\mapsto 110 otherwise constant, 
\begin{equation*}
\begin{array}{|c|c|}
\hline
000  &   001  \\
001 &   010   \\
010  &    000 \\
011 & 100 \\
100 & 011 \\
101 & 101 \\
110 & 111\\
111 &   110\\
\hline
\end{array}
\end{equation*}
is neither self dual, affine, 0-fixing nor 1-fixing.
\item $D$: The map 
%$000\mapsto 001 \mapsto 010\mapsto 000$, $011\mapsto 100\mapsto 011$, $111\mapsto 110 otherwise constant, 
\begin{equation*}
\begin{array}{|c|c|}
\hline
000 &  111  \\
001 &  001   \\
010 &  010 \\
011 & 011 \\
100 & 100 \\
101 & 101 \\
110 & 110\\
111 &   000\\
\hline
\end{array}
\end{equation*}
is  self dual, but not affine, 0-fixing nor 1-fixing.

\item $DP$: The map 
%$000\mapsto 001 \mapsto 010\mapsto 000$, $011\mapsto 100\mapsto 011$, $111\mapsto 110 otherwise constant, 
\begin{equation*}
\begin{array}{|c|c|}
\hline
000 &  000  \\
001 &  001   \\
010 &  101 \\
011 & 011 \\
100 & 100 \\
101 & 010 \\
110 & 110\\
111 &   111\\
\hline
\end{array}
\end{equation*}
is  self dual, 0-fixing and 1-fixing, but not affine.
\item $A=\Aff(2)$: The map 
$x \mapsto 
\left[
\begin{array}{ccc}
1  & 1  & 0  \\
0  & 1  &0   \\
  0&  0 &  1 
\end{array}
\right] x + 
\left[
\begin{array}{c}
1   \\
1    \\
1 
\end{array}
\right]
$
is affine, but not linear, 1-fixing, nor self dual.

\item $AP_0$: Affine and fixing zero means linear. The map 
$x \mapsto 
\left[
\begin{array}{ccc}
1  & 1  & 0  \\
0  & 1  &0   \\
  0&  0 &  1 
\end{array}
\right] x $
is linear, but not 1-fixing, nor self dual.

\item $AP_1$: Affine and fixing zero means linear. The map 
$x \mapsto 
\left[
\begin{array}{ccc}
1  & 1  & 0  \\
0  & 1  &0   \\
  0&  0 &  1 
\end{array}
\right] x + 
\left[
\begin{array}{c}
1   \\
0    \\
0 
\end{array}
\right]$
is 1-fixing, but not linear, so not in $AP$.

\item $AD$: The map 
$x \mapsto 
\left[
\begin{array}{ccc}
1  & 1  & 1  \\
0  & 1  &0   \\
  0&  0 &  1 
\end{array}
\right] x + 
\left[
\begin{array}{c}
1   \\
0    \\
0 
\end{array}
\right]$
is self dual, but not linear, so not in $AP$.

\item $AD$: The map 
$x \mapsto 
\left[
\begin{array}{ccc}
1  & 1  & 1  \\
0  & 1  &0   \\
  0&  0 &  1 
\end{array}
\right] x 
$
is self dual,  linear, and 1-fixing, but not in $\Pi$.

\item $U=Deg(A)$: The map $NOT$ is unary, affine and non trivial, so not in $\Pi$.

\item $P_0$: The map 
%$000\mapsto 001 \mapsto 010\mapsto 000$, $011\mapsto 100\mapsto 011$, $111\mapsto 110 otherwise constant, 
\begin{equation*}
\begin{array}{|c|c|}
\hline
000 &  000  \\
001 &  001   \\
010 &  010 \\
011 & 111 \\
100 & 100 \\
101 & 101 \\
110 & 110\\
111 &   011\\
\hline
\end{array}
\end{equation*}
is  0-fixing , but not affine or 1-fixing.

\item $P_1$: The map 
%$000\mapsto 001 \mapsto 010\mapsto 000$, $011\mapsto 100\mapsto 011$, $111\mapsto 110 otherwise constant, 
\begin{equation*}
\begin{array}{|c|c|}
\hline
000 &  011  \\
001 &  001   \\
010 &  010 \\
011 & 111 \\
100 & 100 \\
101 & 101 \\
110 & 110\\
111 &   111\\
\hline
\end{array}
\end{equation*}
is  1-fixing , but not affine or 0-fixing.

\item $P$: The map 
%$000\mapsto 001 \mapsto 010\mapsto 000$, $011\mapsto 100\mapsto 011$, $111\mapsto 110 otherwise constant, 
\begin{equation*}
\begin{array}{|c|c|}
\hline
000 &  000  \\
001 &  010   \\
010 &  011 \\
011 &  001 \\
100 & 100 \\
101 & 101 \\
110 & 110\\
111 &  111\\
\hline
\end{array}
\end{equation*}
is  0-fixing and1-fixing , but not affine or self dual.

\end{itemize}
\qed
\end{proof}

Note that there are non-projection balanced monotone maps, e.g.
000,100,010,001 mapto 0, others map to 1. But this map cannot be extended to a permutation on $A^3$.

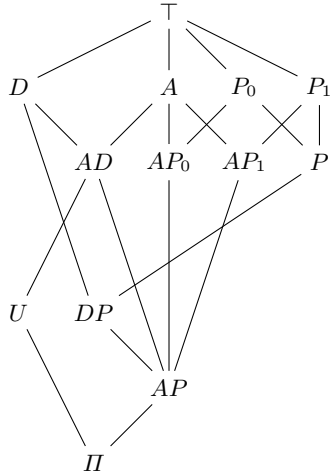
\begin{figure}[htbp]
\begin{center}
\begin{tikzpicture}
    % First, locate each of the nodes and name them
    \node (top) at (2,9) {$\top$};
    
    \node  (A) at (2,8) {$A$};
    \node (AP1) at (3,7){$AP_1$};
    \node [below  of=A] (AP0){$AP_0$};
    \node (AD) at (1,7){$AD$};
    \node (AP) at (2,4){$AP$};
    
    \node  (P0) at (3,8) {$P_0$};
    \node  (P1) at (4,8) {$P_1$};
    \node  (P) at (4,7) {$P$};

    \node  (D) at (0,8) {$D$};
    \node  (DP) at (1,5) {$DP$};

 %   \node (U)  at (4,7) {$U$};
    \node (UD) at (0,5)   {$U$};
    
    \node (bot) at (1,3) {$\Pi$};
    
    % Now draw the lines:    
    
    \draw [] (top) -- (A);
    \draw [] (top) -- (D);
    \draw [] (top) -- (P0);
    \draw [] (top) -- (P1);

    \draw [] (P1) -- (P);
    \draw [] (P0) -- (P);
    \draw [] (P) -- (DP);
    \draw [] (P1) -- (AP1);
    \draw [] (P0) -- (AP0);
    \draw [] (DP) -- (AP);
    
    \draw [] (A) -- (AP0);
    \draw [] (A) -- (AP1);
    \draw [] (A) -- (AD);
    \draw [] (AD) -- (AP);
    \draw [] (AP0) -- (AP);
    \draw [] (AP1) -- (AP);
    \draw [] (AP) -- (bot);

  %  \draw [] (A) -- (U);
    \draw [] (AD) -- (UD);
    \draw [] (UD) -- (bot);

    \draw [] (D) -- (DP);
    \draw [] (D) -- (AD);
    %\draw [] (DP) -- (bot);

\end{tikzpicture}
\caption{The inclusion diagram of the  clones on two elements  that give distinct permutation clones. This is a sublattice of the Post lattice.}
\label{figure_lattice_binary}
\end{center}
\end{figure}

From  Table \ref{table_binary}  we see that only two nontrivial permutation clones
on two elements are ancilla closed by Theorem \ref{thm_ancillaclosed}, 
the affine maps and the degenerate maps. 
And in fact these are the only relationally defined ancilla closed permutation clones on the binary alphabet according to \cite{aaronsonetal15}.

\section{Conclusions and further work}

The collection of relationally defined permutation clones is restricted enough to be comprehensible, 
but still contains many  interesting permutation clones.
These permutation clones are those that are defined by a property of their components and nothing else other than reversibility.
We have found the cluster definition for balanced maps.

We find that for a binary alphabet, there are only finitely many relationally defined 
permutation clones.
%For a set of order three, we find that ZZZZ.
We cannot expect the general case to be so clean.
We observed that many relations $R$  give rise to trivial permutation clones.
Determining what relational properties imply triviality would be of value.
Can we  recognise a relation  that gives $\Pol(w_R)=\Pi$?

%The work of Dmitry Zhuk \cite{zhuk2011} shows that a relational definition for a class of clones exists that is uncountably infinite.
%However this uncountable infinite collection of clones collapses to a finite number of permutation clones.

\subsubsection{Acknowledgements} Supported by the Austrian Science Fund (FWF): P33878  and PEEK Project AR561.
%
% ---- Bibliography ----
%
% BibTeX users should specify bibliography style 'splncs04'.
% References will then be sorted and formatted in the correct style.
%
 \bibliographystyle{splncs04}
 \bibliography{RelPermClones}

\end{document}